\documentclass[a4paper,oneside,12pt]{amsart}
\usepackage{placeins} 

\usepackage{amsmath,amssymb,amsthm}
\usepackage{epsf}
\usepackage{epsfig}
\usepackage[english]{babel}

\addto\captionsenglish{}

\usepackage{mathrsfs}   % math script font  -> \mathscr{E} 

\setlength{\topmargin}{-7mm}
\setlength {\textwidth}{16.0cm}
\setlength {\textheight}{23.5cm}
\setlength{\oddsidemargin 0cm}
\setlength{\evensidemargin 0cm}

\usepackage{fancyhdr}           % Headers and footers definitions
\pagestyle{fancy}

%\lhead{\rule[-1.3ex]{0pt}{1.3ex} \small \nouppercase{Canonical Coordinates and Natural Equations for Minimal Time\,-like Surfaces in $\RR^4_2$}} %\lhead{}
%\lhead{ \rule[-1.3ex]{0pt}{1.3ex} \tiny  \MakeUppercase{Canonical Coordinates and Natural Equations for Minimal Time\,-like Surfaces in $\RR^4_2$}}
\lhead{\rule[-1.0ex]{0pt}{1.0ex} \scriptsize \scshape 
{Canonical Coordinates and Natural Equations for Minimal Time\,-like Surfaces in $\RR^4_2$}}
\chead{}
\rhead{}
%\cfoot{}

\footskip = 12mm %25pt

% MATH -----------------------------------------------------------
%\newcommand{\R}{\mathbb R}
\newcommand{\RR}{\mathbb R}
\newcommand{\CC}{\mathbb C}

\newcommand{\DD}{\mathbb D}

\renewcommand{\Re}{\mathop{\rm Re}\nolimits}
\renewcommand{\Im}{\mathop{\rm Im}\nolimits}
\newcommand{\trace}{\mathop{\mathrm{trace}}\nolimits}
\newcommand{\arctanh}{\mathop{\mathrm{arctanh}}\nolimits}

\newcommand{\jj}{\mathrm{j}}
\newcommand{\qq}{\mathrm{q}}

\newcommand{\manifold}[1]{\mathcal{#1}}
\newcommand{\M}{\manifold{M}}
\newcommand{\D}{\manifold{D}}

\newcommand{\vect}[1]{\mathrm{#1}} %{\mathbf{#1}}
\newcommand{\x}{\vect{x}}
\newcommand{\y}{\vect{y}}
\newcommand{\va}{\vect{a}}
\newcommand{\vb}{\vect{b}}
\newcommand{\vc}{\vect{c}}
\newcommand{\vd}{\vect{d}}
\newcommand{\vX}{\vect{X}}
\newcommand{\vY}{\vect{Y}}
\newcommand{\vZ}{\vect{Z}}
\newcommand{\vW}{\vect{W}}
\newcommand{\vH}{\vect{H}}
\newcommand{\n}{\vect{n}}
\newcommand{\m}{\vect{m}}

\newcommand{\e}{\mathrm{e}}

% THEOREMS -------------------------------------------------------
%\newtheorem{thm}{Theorem}[section]
%\newtheorem{thmc}[thm]{Теорема}
%\newtheorem{cor}[thm]{Corollary}
%\newtheorem{lem}[thm]{Lemma}
%\newtheorem{prop}[thm]{Proposition}
%\theoremstyle{definition}
%\newtheorem{defn}[thm]{Definition}
%\newtheorem{defnc}[thm]{Дефиниция}
%\theoremstyle{remark}
%\newtheorem{rem}[thm]{Remark}
%\newtheorem{remc}[thm]{Забележка}

\newtheorem{theorem}{Theorem}[section]
%[section]
\newtheorem{prop}[theorem]{Proposition}%[section]
\newtheorem{lem}[theorem]{Lemma}%[section]

\theoremstyle{remark}
\newtheorem{remark}{Remark}[section]
\theoremstyle{definition}
\newtheorem{dfn}{Definition}[section]

\newcommand{\ds}{\displaystyle}

% за номерация на формулите по секции 
\makeatletter
\@addtoreset{equation}{section}
\makeatother

\renewcommand{\baselinestretch}{1.10}

%===================================================================================================================================
%===================================================================================================================================

\begin{document}

\title
{Canonical Coordinates and Natural Equations for Minimal Time\,-like Surfaces in $\RR^4_2$}%

\author{Georgi Ganchev and Krasimir Kanchev}

\address{Bulgarian Academy of Sciences, Institute of Mathematics and Informatics,
Acad. G. Bonchev Str. bl. 8, 1113 Sofia, Bulgaria}
\email{ganchev@math.bas.bg}%

\address {Department of Mathematics and Informatics, Todor Kableshkov University of Transport,
158 Geo Milev Str., 1574 Sofia, Bulgaria}%
\email{kbkanchev@yahoo.com}%

\subjclass[2000]{Primary 53A10, Secondary 53B30}%

\keywords{Minimal time-like surfaces in the four-dimensional pseudo-Euclidean space with neutral metric, 
canonical coordinates, system of natural equations}%

\begin{abstract}
We apply the complex analysis over the double numbers $\DD$ to study the minimal time-like surfaces in $\RR^4_2$.
A minimal time-like surface which is free of degenerate points is said to be of general type. We divide the minimal 
time-like surfaces of general type into three types and prove that these surfaces admit special geometric (canonical)
parameters. Then the geometry of the minimal time-like surfaces of general type is determined by the Gauss curvature 
$K$ and the curvature of the normal connection $\varkappa$, satisfying the system of natural equations for these surfaces. 
We prove the following: If $(K, \varkappa),\, K^2- \varkappa^2 > 0 $ is a solution to the system of natural equations, then 
there exists exactly one minimal time-like surface of the first type and exactly one minimal time-like surface of 
the second type with invariants $(K, \varkappa)$;\, if $(K, \varkappa),\, K^2- \varkappa^2 < 0 $ is a solution to the 
system of natural equations, then there exists exactly one minimal time-like surface of the third type with
invariants $(K, \varkappa)$.
 
\end{abstract}

\maketitle

\thispagestyle{empty}

\tableofcontents

%\clearpage

%===================================================================================================================================

\section{Introduction}\label{sec_introduction}
The most useful parameters in the theory of surfaces in Euclidean space $\RR^3$ are the principal and isothermal 
parameters, whose geometric nature allows a natural interpretation of the results of analytical calculations.
Solving some problems for the surfaces in $\RR^3$ requires further specialization of these parameters. For the 
class of Weingarten surfaces in $\RR^3$ it was shown in \cite{GM} that the principal parameters can be specialized 
so that the coefficients of the first fundamental form are expressed through the invariants of the surface. These 
parameters are determined up to the orientation and renumbering of the parametric lines. That is why these 
coordinates were called \emph{canonical} coordinates. Using canonical parameters, 
it was proved in \cite{GM} that the local geometry of Weingarten surfaces is determined by one function satisfying 
one PDE. This means that the number of determining invariants and the number of integrability conditions are 
minimal. It is worth mentioning that the minimal surfaces in $\RR^3$, the maximal space-like surfaces in
$\RR^3_1$ and the minimal time-like surfaces in $\RR^3_1$ admit locally canonical parameters which are principal 
and isothermal simultaneously.

It is natural to expect that minimal surfaces of co-dimension two in $\RR^4$, $\RR^4_1$ and $\RR^4_2$ admit locally 
geometrically determined special isothermal parameters, having the property that the coefficients of the first 
fundamental form are expressed through the invariants of the surface. Special geometric coordinates on minimal 
surfaces in $\RR^4$ were introduced in \cite{Itoh}. Further, these parameters were used in \cite{T-G-1} to show that
a minimal surface in $\RR^4$ is determined up to a motion by two invariant functions satisfying two PDEs. We note that
the number of the invariant functions determining the surfaces and the number of the integrability conditions are 
minimal. On the base of the canonical coordinates, in \cite{G-K-1}, we solved explicitly the system of background 
PDEs of minimal surfaces in $\RR^4$ in terms of two holomorphic functions in $\CC$.

Special isothermal coordinates on maximal space-like surfaces in Minkowski space-time $\RR^4_1$ were used in \cite{A-P-1} 
to prove that the local geometry of these surfaces is determined by two invariant functions satisfying two PDEs. 
Using these canonical coordinates we solved in \cite{G-K-2} explicitly the system of natural (background) PDEs 
of maximal space-like surfaces in $\RR^4_1$.

Minimal space-like surfaces in the pseudo-Euclidean 4-space with neutral metric $\RR^4_2$ were introduced and used in 
\cite{S-1} to prove similar results. Another approach to the canonical coordinates on maximal space-like surfaces 
in $\RR^4_2$ we used in \cite{G-K-arXiv-3}.  On the base of these canonical coordinates we obtained canonical 
Weierstrass representation for the minimal space-like surfaces in $\RR^4_2$ in terms of two holomorphic functions in 
$\CC$. In \cite{G-K-3} we solved explicitly the system of natural PDEs of minimal space-like surfaces in $\RR^4_2$.

Minimal time-like surfaces in Minkowski space-time were considered in \cite{G-M-1}. For these surfaces it was proved 
that they admit locally canonical parameters and their geometry is determined by two invariant functions, satisfying 
a system of two natural PDEs.

In \cite{S-2} it was found the background system of natural equations of the minimal Lorentzian (time-like) surfaces in $\RR^4_2$
in terms of the Gauss curvature $K$ and the curvature of the normal connection $\varkappa$. Under the condition 
$K^2-\varkappa^2 > 0$ it was proved that there are two families of minimal time-like surfaces, determined by any 
solution $(K, \varkappa)$ to the system of natural equations. Since the considerations in \cite{S-2} are in arbitrary isothermal 
coordinates, then $(K, \varkappa)$ determine the minimal time-like surface up to a transformation preserving both curvatures. 
 
Minimal Lorentzian (time-like) surfaces in $\RR^4_2$, whose Gauss curvature $K$ and curvature of the normal connection 
$\varkappa$ satisfy the inequality $K^2-\varkappa^2 > 0$, were studied in \cite{M-A-1} from  the point of view of canonical coordinates.  

In this paper we study the general class of time-like surfaces in the four-dimensional pseudo-Euclidean space $\RR^4_2$
with neutral metric, i.e. minimal time-like surfaces satisfying the conditions $K^2-\varkappa^2 > 0$ or $K^2-\varkappa^2 < 0$,
where $K$ is the Gauss curvature and $\varkappa$ is the curvature of the normal connection of the surface.

The main feature of our investigations is the application of the analysis over the double numbers in $\DD$ as a convenient tool especially in the geometry of minimal time-like surfaces. To this end, we consider any  time-like surface $\M=(\D ,\x(u,v))$ parametrized by isothermal parameters $(u,v) \in \D \subset \RR^2$. Then we introduce the "complex" variable $t=u+\jj v \in \DD$, where $\jj^2=1$\,. Thus any function on $\M$ can be considered as a function of $t$. Considering the natural extension $\DD^4_2$ 
over $\DD$ of $\RR^4_2$, in Section \ref{sect_Phi_R42-tl} we introduce on any time-like surface $\M$ the $\DD^4_2$-valued 
function $\Phi$ by equality \eqref{Phi_def-tl}. 

In Section \ref{sect_Phi_Psi_R42-tl} we characterize minimal time-like surfaces in $\mathbb R^4_2$ via the function $\Phi$. 
Theorem \ref{Min_x_Phi-thm-tl} states that a time-like surface $\M$ is minimal if and only if the function $\Phi$ is 
holomorphic in $\DD$. Theorem \ref{Min_x_Psi-thm-tl} gives the representation of the minimal surface $\M$ 
through the primitive function  $\Psi $ of $\Phi$.

In Section \ref{sect_K_kappa-Phi_R42-tl} we obtain formulas \eqref{K_kappa_Phi_R42-tl}
for the Gauss curvature $K$ and the curvature of the normal connection $\varkappa$
of the minimal time-like surface expressed by the function $\Phi$. 

In Section \ref{sect_can-def_R42-tl} we introduce degenerate points on a minimal time-like surface by Definition 
\ref{DegP-def-tl}. In Theorem \ref{DegP_kind123-K_kappa-tl} we prove that the degenerate points are characterized by 
the equality $K^2-\varkappa^2=0$. Then the minimal time-like surfaces in $\RR^4_2$, which are free of degenerate points 
are said to be of \textit{general type}. Hence, the minimal time-like surfaces of general type in $\RR^4_2$ are 
characterized by the inequality $K^2-\varkappa ^2 \neq 0$. In Definition \ref{Min_Surf_kind123-def-tl} we divide the 
minimal time-like surfaces of general type into three types: the first, the second and the third type depending on  
the properties of ${\Phi'^\bot}^2$.
Canonical coordinates on any minimal time-like surface of general type are introduced in Definition \ref{Can-def_R42-tl}
by the condition:
\[
{\Phi'^\bot}^2=\varepsilon\,,
\]
where $\varepsilon = 1, \, \varepsilon = -1$ and $\varepsilon =\jj$ if the time-like surface is of the first, the second 
and the third type, respectively. 

In Theorem \ref{DegP_kind123-K_kappa-tl} we show that the inequality $K^2-\varkappa^2 > 0$ characterizes the  minimal 
time-like surfaces of the first or the second type, while the inequality $K^2-\varkappa^2 < 0$ characterizes the minimal 
time-like surfaces of the third type. 
In Proposition \ref{Can-sigma_R42-tl} we characterize the three types of canonical parameters by conditions for the second fundamental form of the surface.

In Theorem \ref{Can_Coord-exist_R42-tl} we prove that any minimal time-like surface of general type in $\RR^4_2$ 
admits locally canonical coordinates. Theorem \ref{Can_Coord-uniq_R42-tl} establishes the uniqueness properties of the 
canonical coordinates.

In  Theorem \ref{Thm-Nat_Eq_K_kappa_R42-tl} it is shown that the curvatures $K$ and $\varkappa$ satisfy the system of 
natural equations \ref{Nat_Eq_K_kappa_R42-tl} of the minimal time-like surfaces of general type in $\RR^4_2$. 

The basic theorems in this paper are Theorem \ref{Bone_Phi_bar_Phi_m1_m2_K_kappa_kind1_R42-tl}, Theorem 
\ref{Bone_Phi_bar_Phi_m1_m2_K_kappa_kind2_R42-tl} and Theorem \ref{Bone_Phi_bar_Phi_m1_m2_K_kappa_kind3_R42-tl}.
Theorem \ref{Bone_Phi_bar_Phi_m1_m2_K_kappa_kind1_R42-tl} states that any solution $(K, \varkappa)$ to the system
\ref{Nat_Eq_K_kappa_R42-tl} with the property $K^2 - \varkappa^2 > 0$ generates exactly one minimal time-like surface 
of the first type up to a motion; Theorem \ref{Bone_Phi_bar_Phi_m1_m2_K_kappa_kind2_R42-tl} states that any solution 
$(K, \varkappa)$ to the system \ref{Nat_Eq_K_kappa_R42-tl} with the property $K^2- \varkappa^2 > 0$ generates exactly 
one minimal time-like surface of the second type up to a motion; Theorem \ref{Bone_Phi_bar_Phi_m1_m2_K_kappa_kind3_R42-tl} 
states that any solution $(K, \varkappa)$ to the system \ref{Nat_Eq_K_kappa_R42-tl} with the property $K^2- \varkappa^2 < 0$ 
generates exactly one minimal time-like surface of the third type up to a motion. Theorem 
\ref{Bone_Phi_bar_Phi_m1_m2_K_kappa_kind3_R42-tl} solves the case, which is not treated in \cite{S-2} and \cite{M-A-1}.

An isometry between two minimal time-like surfaces preserving the curvature $\varkappa$ is called  a \textit{strong} 
isometry. In Theorem \ref{Thm-Str_Isom_Surf_R42-tl} we find the surfaces, strongly isometric to a given minimal 
time-like surface in the general case $K^2-\varkappa^2 \neq 0$. Similar question is considered in \cite{S-2} in the 
case $K^2-\varkappa^2 > 0$.
%\newpage

\section{Preliminaries}\label{sect_preliminaries_R42-tl}

 Let $\RR^4_2$ be the standard four-dimensional pseudo-Euclidean space with neutral metric. The indefinite scalar product in  
$\RR^4_2$ is given by the formula:
\begin{equation}\label{R^4_2-tl}
\va\cdot \vb=-a_1b_1+a_2b_2-a_3b_3+a_4b_4\,.
\end{equation}

 Let $\M_0$ be a two-dimensional manifold and $\x : \M_0 \to \RR^4_2$ be an immersion of $\M_0$ in $\RR^4_2$.
Then it is said briefly that $\M=(\M_0 ,\x)$ (or $\M$) is a regular surface in $\RR^4_2$. $T_p(\M)$ will
denote the tangential space of $\M$ at a point $p$, and through the differential $\x_*$ of $\x$, the space $T_p(\M)$ 
is identified with the corresponding subspace in $\RR^4_2$. $N_p(\M)$ will denote the normal space of $\M$ at the
point $p$, which appears to be the orthogonal complement of $T_p(\M)$ in $\RR^4_2$. The scalar product in $\RR^4_2$ 
induces a scalar product in $T_p(\M)$. If this scalar product is indefinite, then $\M$ is said to be a
\emph{time-like} surface in $\RR^4_2$. Then the induced scalar product onto the normal space $N_p(\M)$ is also 
indefinite. 

 Let $(u,v) \in \D \subset \RR^2$ be a pair of local coordinates (parameters) in a neighborhood of a point $p \in \M$.
The immersion $\x$ generates a vector function $\x (u,v) : \D \to \RR^4_2$. Since our 
considerations are local, we suppose that $\M$ is given by $(\D ,\x)$, where $\D \subset \RR^2$.

We use the classical denotations for the coefficients of the first fundamental form $\mathbf{I}$ of $\M$:
$E=\x_u^2$, $F=\x_u\cdot\x_v$ and $G=\x_v^2$. Then 
\[
\mathbf{I}=E\,du^2+2F\,dudv+G\,dv^2 .
\]

 It is well known that any point $p \in \M$ has a neighborhood in which one can introduce isothermal coordinates 
characterized by the conditions $E=-G$ and $F=0$. Further, we suppose that $(u,v)$ are isothermal local coordinates
on $\M$, such that $E<0$ and $G>0$.

\medskip

Studying time-like surfaces it is convenient to identify the coordinate plane $\RR^2$ with the plane of the 
\emph{double numbers} $\DD$, which are determined in the following way:
\[
\DD=\{t=u+\jj v : \  u,v \in \RR ,\ \jj^2=1 \}\,.
\] 
Along with the real coordinates $(u,v)$, we also consider the coordinate $t=u+\jj v$, $t \in \D\subset\DD$. 
In this way, all functions on $\M$ will also be considered as functions of the variable $t$.

 For any $t=u+\jj v \in \DD$ we denote by $|t|^2$ the square of its modulus (the amplitude), which is given by:
\[
|t|^2 = t\bar t = (u+\jj v)(u-\jj v) = u^2-v^2.
\]
We denote by $\DD_0$ the set of non-invertible elements in $\DD$, characterized in the following way:
\begin{equation}\label{D0}
\DD_0 = \{t\in\DD : \  |t|^2 = 0 \} = \{t\in\DD : \  \Re t = \pm \Im t \}\,.
\end{equation} 
We denote by $\DD_+$ the set of the "positive"\ elements in $\DD$:
\begin{equation}\label{D+}
\DD_+ = \{t\in\DD : \  |t|^2 > 0; \ \Re t > 0 \} = \{t\in\DD : \  \Re t \pm \Im t > 0 \}\,.
\end{equation}

In many cases, operating with the double numbers, it is convenient along with the basis $(1,\jj)$ to use the basis 
$(\qq,\bar\qq)$, where:
\begin{equation}\label{qq-def}
\qq = \frac{1-\jj}{2} \,; \qquad \bar\qq = \frac{1+\jj}{2}\,.
\end{equation}
The basis $(\qq,\bar\qq)$ is called \emph{diagonal basis} or \emph{null basis}. It follows in a straightforward way that:
\begin{equation}\label{qq-prop}
\qq^2 = \qq \,; \qquad \bar\qq^2 = \bar\qq \,; \qquad \qq \bar\qq = 0 \,.
\end{equation} 
Any double number $t\in\DD$ with respect to the basis $(\qq,\bar\qq)$ is represented in the following way:
\begin{equation}\label{t-qq}
t = u+\jj v = (u-v)\qq + (u+v)\bar\qq \,.
\end{equation}
Taking into account \eqref{qq-prop}, it follows that the addition and the multiplication with respect to the basis
$(\qq,\bar\qq)$, are carried out component-wise:
\begin{equation}\label{sum-prod-qq}
\begin{array}{rl}
(a_1\qq + b_1\bar\qq) + (a_2\qq + b_2\bar\qq) \!\!\! &= (a_1 + a_2)\qq + (b_1 + b_2)\bar\qq\,,\\
(a_1\qq + b_1\bar\qq)   (a_2\qq + b_2\bar\qq) \!\!\! &= (a_1   a_2)\qq + (b_1   b_2)\bar\qq\,.
\end{array}
\end{equation}
This means that $\DD$ as an algebra is isomorphic to two copies of $\RR:\DD=\RR\oplus\RR$.
It follows from \eqref{t-qq} that the sets $\DD_0$ and $\DD_+$ can be represented as follows:
\[
\DD_0 = \{t=a\qq+b\bar\qq \in \DD : \  a=0\  \text{or}\  b=0 \}\,.
\]
\[
\DD_+ = \{t=a\qq+b\bar\qq \in \DD : \  a>0\  \text{and}\  b>0 \}\,.
\]

 The trigonometric form of the double numbers is given as follows: 

If $t = u+\jj v \in \DD_+$ and $|t|^2=u^2-v^2=1$\,, then there exists a unique $\theta\in\RR$, such that 
$u=\cosh\theta$\: and\: $v=\sinh\theta$. Then $t=\cosh\theta+\jj\sinh\theta=\e^{\jj\, \theta}$. For any 
$t \in \DD_+$, putting $\rho=\sqrt{|t|^2}$, we obtain $t=\rho\,e^{\jj\,\theta}$. Finally, if $t$ is an
arbitrary double number $t \notin \DD_0$\,, then there exists a unique $\delta=\pm 1; \pm\jj$, such
that $\delta t \in \DD_+$. Therefore $t$ can be written in a unique way in the form:
\begin{equation}\label{t-exp}
t = \delta\rho\,e^{\jj\,\theta}\,;  \qquad\quad  \delta=\pm 1; \pm\jj\,, \quad  \rho>0\,, \quad  \theta\in\RR\,.
\end{equation}

\smallskip

 If $f:\D\to\DD$ is a differentiable function, then the differential of $f$ is represented by:
\[
df = \frac{\partial f}{\partial t} dt + \frac{\partial f}{\partial \bar t} d\bar t \,,
\]  
where $\frac{\partial }{\partial t}$ and $\frac{\partial }{\partial \bar t}$ by definition are given as follows:
\begin{equation}\label{delta-t-delta-uv}
\frac{\partial }{\partial t} = 
\frac{1}{2}\left(\frac{\partial }{\partial u} + \jj \frac{\partial }{\partial v}\right) ; \qquad
\frac{\partial }{\partial \bar t} = 
\frac{1}{2}\left(\frac{\partial }{\partial u} - \jj \frac{\partial }{\partial v}\right) .
\end{equation}
The function $f$ is said to be \emph{holomorphic}, if $\frac{\partial f}{\partial \bar t} = 0$ and
respectively \emph{anti-holomorphic}, if $\frac{\partial f}{\partial t} = 0$\,.
If $f=g+\jj h$, where $g$ is the "real"\ part and $h$ is the "imaginary"\ part,
then $f$ is holomorphic if and only if $g$ and $h$ satisfy the following conditions, analogous to the 
conditions of Cauchy-Riemann:
\begin{equation}\label{CR-DD}
h_u = g_v \,; \qquad h_v = g_u \,.
\end{equation}

 These equalities imply that a map in $\RR^2_1$ is conformal with respect to the indefinite metric,
if and only if it is given by a holomorphic or anti-holomorphic function in $\DD$. Further, it follows 
that the inverse function theorem has its analogue:
If $f$ is a holomorphic function, satisfying the condition $|f'|^2 \neq 0$\,, then there exists at least locally
a unique inverse holomorphic function. Especially, the $n$th root of a holomorphic function gives 
a holomorphic function defined and with values in $\DD_+$.

Foundations of algebra and analysis in $\DD$ can be found in \cite{A_F-1}, \cite{K-1} or \cite{M-R-1}.

\medskip

 Let $\DD^4_2$ denote the space $\DD^4$ endowed with the bilinear product $\va\cdot \vb$, 
which is a natural extension of the scalar product in $\RR^4_2$, given by \eqref{R^4_2-tl}.
Then the scalar square $\va^2\in\DD$ of $\va\in\DD^4_2$ is given by
\[\va^2=\va\cdot \va = -a_1^2+a_2^2-a_1^2+a_4^2\,.\]
The square of the norm $\|\va\|^2\in\RR$ of $\va\in\DD^4_2$ is the number
\[\|\va\|^2=\va\cdot\bar \va = -|a_1|^2+|a_2|^2-|a_1|^2+|a_4|^2 ,\]
which is not necessarily positive.   

 We use the standard imbedding of $\RR^4_2$ into $\DD^4_2$ and consider the "complexified" \,tangent space
$T_{p,\DD}(\M)$ of $\M$ at the point $p$ as a subspace of $\DD^4_2$, which is the linear span of $T_p(\M)$ 
in $\DD^4_2$. Analogously, we identify the "complexified"\, normal space $N_{p,\DD}(\M)$ of $\M$ with the
corresponding subspace of $\DD^4_2$, which is the linear span of $N_p(\M)$ in $\DD^4_2$.

Since $T_{p,\DD}(\M)$ and $N_{p,\DD}(\M)$ are generated by the real subspaces $T_p(\M)$ and $N_p(\M)$ respectively,
then they are closed with respect to the complex conjugation in $\DD^4_2$ and are mutually orthogonal. 
Therefore we have the following orthogonal decomposition:
\[\DD^4_2 = T_{p,\DD}(\M) \oplus N_{p,\DD}(\M)\,.\]

 For a given vector $\va \in \DD^4_2$ let $\va^\top$ denote the orthogonal projection of the vector $\va$
into the complexified tangent space  of $\M$. The orthogonal projection of $\va$ into the complexified normal 
space of ${\M}$ is denoted by $\va^\bot$. Then for any vector $\va$ we have:
\[\va=\va^\top + \va^\bot .\]

\medskip

  Let $\nabla$ be the canonical linear connection in $\RR^4_2$. For any tangent vector fields $\vX$, $\vY$ and normal 
vector field $\n$ the Gauss and Weingarten formulas are as follows: 
\[\nabla_\vX \vY = \nabla^T_\vX \vY + \sigma(\vX,\vY)\,,\]
\[\nabla_\vX \n = -A_\n(\vX) + \nabla^N_\vX \n \,, \] 
where: $\nabla^T$ is the Levi-Civita connection of $\M$,
$\sigma(\vX,\vY)$ is the second fundamental form of $\M$, $A_\n$ is the Weingarten map with respect to $n$ and
$\nabla^N_\vX \n$ is the normal connection of $\M$. The following equality holds:  
\[ A_\n\vX\cdot\vY = \sigma(\vX,\vY)\cdot \n \;. \]

 The curvature tensor $R$ of the Levi-Civita connection on $\M$ and the curvature tensor $R^N$ of the normal connection
on $\M$ are defined as follows:
\[ R(\vX,\vY)\vZ = \nabla^T_\vX \nabla^T_\vY \vZ - \nabla^T_\vY \nabla^T_\vX \vZ - \nabla^T_{[\vX,\vY]} \vZ\,,\]
\[ R^N(\vX,\vY)\n = \nabla^N_\vX \nabla^N_\vY \n - \nabla^N_\vY \nabla^N_\vX \n - \nabla^N_{[\vX,\vY]} \n \,.\]
The covariant derivative of $\sigma$ is given by the formula:
\[ (\overline\nabla_\vX \sigma)(\vY,\vZ) =
\nabla^N_\vX \sigma(\vY,\vZ)-\sigma(\nabla^T_\vX\vY,\vZ)-\sigma(\vY,\nabla^T_\vX\vZ)\,.\]

 The tensors $R$, $R^N$ and $\overline\nabla\sigma$ satisfy the fundamental equations in the theory of
Riemannian submanifolds:
 
\noindent  
\emph{the Gauss equation}: 
\begin{equation}\label{Gauss}
R(\vX,\vY)\vZ\cdot\vW = \sigma(\vX,\vW)\sigma(\vY,\vZ) - \sigma(\vX,\vZ)\sigma(\vY,\vW)\, ;
\end{equation}
 \emph{the Codazzi equation}: 
\begin{equation}\label{Codazzi}
(\overline\nabla_\vX \sigma)(\vY,\vZ) = (\overline\nabla_\vY \sigma)(\vX,\vZ)\; ; 
\end{equation}
\emph{the Ricci equation}:
\begin{equation}\label{Ricci}
R^N(\vX,\vY)\n\cdot\m = [A_\n,A_\m]\vX\cdot\vY\, .
\end{equation}
In the last equality $[A_\n,A_\m]$ denotes the commutator $A_\n A_\m - A_\m A_\n$ of $A_\n$ and $A_\m$.

 Basic invariants of any surface $\M$ in $\RR^4_2$ are the \emph{mean curvature} $\vH$,
the \emph{Gauss curvature} $K$ and the \emph{curvature of the normal connection (normal curvature)} $\varkappa$. 
Let $\vX_1$ and $\vX_2$ be an orthonormal frame field, tangent to $\M$, such that $\vX_1^2=-1$\,. 
Denote by $\n_1$ and $\n_2$ an orthonormal frame field, normal to $\M$, such that $\n_1^2=-1$ and at any point 
$p\in\M$, the quadruple $(\vX_1,\vX_2,\n_1,\n_2)$ is a right oriented orthonormal basis in $\RR^4_2$. 
Then $\vH$ is the vector-valued function with values in $N_p(\M)$ defined by:
\begin{equation}\label{H-def-tl}
\vH = \frac{1}{2}\trace\sigma = \frac{1}{2}(-\sigma(\vX_1,\vX_1)+\sigma(\vX_2,\vX_2)) \,.
\end{equation}
The Gauss curvature $K$ by definition is:
\begin{equation}\label{K-def-tl}
K = -R(\vX_1,\vX_2)\vX_2\cdot\vX_1 \,.
\end{equation}
The normal curvature $\varkappa$ is given by the equality:
\begin{equation}\label{kappa-def-tl}
\varkappa = R^N(\vX_1,\vX_2)\n_2\cdot\n_1 \,.
\end{equation}

 The Gauss equation \eqref{Gauss} implies that $K$ is expressed through $\sigma$ as follows:
\begin{equation}\label{K_sigma-tl}
K = -\sigma(\vX_1,\vX_1)\sigma(\vX_2,\vX_2) + \sigma^2(\vX_1,\vX_2)\,.
\end{equation}
 By virtue of the Ricci equation the normal curvature $\varkappa$ satisfies the following equality:
\begin{equation}\label{kappa_A-tl}
\varkappa = [A_{\n_2},A_{\n_1}] \vX_1\cdot \vX_2=
A_{\n_1}\vX_1\cdot A_{\n_2}\vX_2-A_{\n_2}\vX_1\cdot A_{\n_1}\vX_2 \,.
\end{equation}

\begin{dfn}\label{Min_Surf-def_R42-tl} 
 Any time-like surface in $\RR^4_2$ with $\vH=0$\, is said to be a \emph{minimal} time-like surface.
\end{dfn}

%===================================================================================================================================

\section{Definition and basic properties of the $\DD^4_2$-valued vector function $\Phi$.}\label{sect_Phi_R42-tl}

Let $\M$ be a time-like surface in $\RR^4_2$, parametrized by isothermal coordinates $t=u+\jj v$.
The vector function $\Phi(t)$ with values in $\DD^4_2$ is defined by the equality:
\begin{equation}\label{Phi_def-tl}
\Phi(t)=2\frac{\partial\x}{\partial t}=\x_u+\jj\x_v \,.
\end{equation}

We use the function $\Phi$ in the present work as a basic analytic tool to study local properties of the
minimal time-like surfaces. First we establish the basic algebraic and analytic properties of the function
$\Phi$.

 From \eqref{Phi_def-tl} we have:
 \[ \Phi^2=(\x_u + \jj\x_v)^2=\x_u^2+\x_v^2+2\jj\,\x_u \x_v \,.\]
This equality implies the following equivalent statements:
\[\Phi^2=0 \  \Leftrightarrow\  \begin{array}{l} \x_u^2+\x_v^2=0\\  \x_u \x_v=0 \end{array} \ 
\Leftrightarrow \  \begin{array}{l} E=\x_u^2=-\x_v^2=-G\\ F=0\,. \end{array}\]
Thus we obtained:
\begin{prop}
If $\M$ is a time-like surface in $\RR^4_2$, then the coordinates $(u,v)$ are isothermal if and only if 
$\Phi^2=0$\,.
\end{prop}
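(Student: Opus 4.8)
The plan is to compute $\Phi^2$ directly and read off its two components in terms of the coefficients $E$, $F$, $G$ of the first fundamental form. Starting from the definition $\Phi = \x_u + \jj\x_v$ and using $\jj^2 = 1$, I would expand the scalar square with respect to the bilinear product on $\DD^4_2$ to obtain $\Phi^2 = \x_u^2 + \x_v^2 + 2\jj\,\x_u\cdot\x_v = (E + G) + 2\jj F$. This expansion is precisely the one already displayed in the lines immediately preceding the statement, so no new computation is needed beyond grouping the terms according to the basis $(1,\jj)$.

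The decisive step is the algebraic observation that $(1,\jj)$ is a basis of $\DD$ over $\RR$, so a double number of the form $a + \jj b$ with $a, b \in \RR$ vanishes if and only if $a = 0$ and $b = 0$. Since $E$, $F$, $G$ are real-valued, applying this to $\Phi^2 = (E + G) + 2\jj F$ yields at once the chain of equivalences $\Phi^2 = 0 \Leftrightarrow (E + G = 0 \text{ and } F = 0) \Leftrightarrow (E = -G \text{ and } F = 0)$.

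Finally I would invoke the characterization of isothermal coordinates recalled in the preliminaries, namely that $(u,v)$ are isothermal exactly when $E = -G$ and $F = 0$. Chaining this with the previous equivalence settles both implications simultaneously.

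I expect essentially no obstacle in this proof; it is a direct translation of the vanishing of $\Phi^2$ into the vanishing of its components. The only point deserving explicit mention is the justification for separating the two components, which rests solely on the linear independence of $1$ and $\jj$ in $\DD$ and not on any geometric property of the surface $\M$.
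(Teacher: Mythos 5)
Your proposal is correct and coincides with the paper's own argument: the paper likewise expands $\Phi^2=\x_u^2+\x_v^2+2\jj\,\x_u\cdot\x_v$ and reads off the equivalence with $E=-G$, $F=0$ by separating real and imaginary parts. Nothing is missing.
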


 Further we consider time-like surfaces parametrized by isothermal coordinates, i.e. the function $\Phi$
satisfies the equality
\begin{equation}\label{Phi2-tl} 
\Phi^2=0\,.
\end{equation}

 The  "norm"\, of $\Phi$ satisfies the following equalities:
\[
\|\Phi\|^2=\Phi\bar{\Phi}=\x_u^2-\x_v^2=E-G=2E=-2G\,.
\]
From here the coefficients of the first fundamental form are expressed through $\Phi$ as follows:
\begin{equation}\label{EG-tl}
E=-G=\frac{1}{2}\|\Phi\|^2\,;\quad F=0 \,.
\end{equation}
Therefore the first fundamental form can be written in terms of $\Phi$ in the following way: 
\begin{equation}\label{Idt-tl}
\mathbf{I}=E\,(du^2 - dv^2)=\frac{1}{2}\|\Phi\|^2 (du^2 - dv^2)=\frac{1}{2}\|\Phi\|^2|dt|^2 .
\end{equation}
In what follows we suppose that $E<0$\,. 
Then it follows from \eqref{EG-tl} that $\Phi$ satisfies the condition:
\begin{equation}\label{modPhi2-tl} 
\|\Phi\|^2 < 0\,.
\end{equation}

 Let $\Delta^h$ denote the hyperbolic Laplace operator, given by the equality:
\[
\Delta^h = \frac{\partial^2}{\partial u^2} - \frac{\partial^2}{\partial v^2}\,.
\]  
Differentiating the equality \eqref{Phi_def-tl} and using that
$\frac{\partial}{\partial\bar t} \frac{\partial}{\partial t} = \frac{1}{4}\Delta^h$, we get:
\begin{equation}\label{dPhi_dbt-tl}
\frac{\partial\Phi}{\partial\bar t}=
\frac{\partial}{\partial\bar t}\, \left(2\, \frac{\partial \x}{\partial t} \right)=
\frac{1}{2}\Delta^h \x \,.
\end{equation}

 The last formula implies that $\ds\frac{\partial\Phi}{\partial\bar t}$ is a real valued vector function.
This is equivalent to the following equality:
\begin{equation}\label{dPhi_dbt=dbPhi_dt-tl}
\frac{\partial\Phi}{\partial\bar t}=\frac{\partial\bar\Phi}{\partial t} \,.
\end{equation}

 Thus we obtained that any function $\Phi$, given by \eqref{Phi_def-tl}, has the properties
\eqref{Phi2-tl}, \eqref{modPhi2-tl} and \eqref{dPhi_dbt=dbPhi_dt-tl}. These three properties are also sufficient
for a $\DD^4_2$-valued function to be locally obtained in the way described. We have the following statement:

\begin{theorem}\label{x_Phi-thm-tl}
 Let the time-like surface $\M=(\D,\x)$ in $\RR^4_2$ be parametrized by isothermal coordinates $(u,v)\in \D$ with  $E<0$
 and let $t=u+\jj v$.
Then the function $\Phi$, defined by \eqref{Phi_def-tl}, satisfies the conditions:
\begin{equation}\label{Phi_cond-tl} 
\Phi^2=0\,; \qquad \|\Phi\|^2 < 0\,; \qquad \frac{\partial\Phi}{\partial\bar t}=\frac{\partial\bar\Phi}{\partial t} \,.
\end{equation}

 Conversely, let $\Phi(t):\ \D \to \DD^4_2$ be a vector function, defined in $\D\subset\DD$,
satisfying the conditions \eqref{Phi_cond-tl}. Then for any point $t_0 \in \D$ there exists a neighborhood
$\D_0\subset\D$ of $t_0$ and a function $\x : \D_0 \to \RR^4_2$, such that $(\D_0,\x)$ is a regular time-like surface in $\RR^4_2$.
This surface is parametrized by isothermal coordinates $(u,v)$, given by $t=u+\jj v$ with $E<0$ and satisfies \eqref{Phi_def-tl}.
The surface $(\D_0,\x)$ is determined by the function $\Phi$ through \eqref{Phi_def-tl} uniquely up to a translation in $\RR^4_2$.
\end{theorem}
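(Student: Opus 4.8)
The plan is to treat the two directions separately, observing that the direct statement has effectively been assembled already in the computations preceding the theorem. For the direct part I would merely collect the three properties: $\Phi^2 = 0$ is the content of the Proposition characterizing isothermal coordinates; $\|\Phi\|^2 < 0$ is \eqref{modPhi2-tl}, which follows from the hypothesis $E < 0$ together with \eqref{EG-tl}; and the symmetry $\frac{\partial \Phi}{\partial \bar t} = \frac{\partial \bar \Phi}{\partial t}$ is \eqref{dPhi_dbt=dbPhi_dt-tl}, valid because \eqref{dPhi_dbt-tl} exhibits $\frac{\partial \Phi}{\partial \bar t} = \frac{1}{2}\Delta^h \x$ as a real (i.e. $\RR^4_2$-valued) vector function.

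For the converse the central idea is to read the third condition in \eqref{Phi_cond-tl} as a real integrability condition. Writing $\Phi = g + \jj h$ with $g$ and $h$ the real and imaginary $\RR^4_2$-valued parts of $\Phi$, I would expand both $\frac{\partial \Phi}{\partial \bar t}$ and $\frac{\partial \bar \Phi}{\partial t}$ by means of \eqref{delta-t-delta-uv}; their equality collapses (the real parts agreeing automatically) to the single relation $g_v = h_u$. This is precisely the closedness of the $\RR^4_2$-valued $1$-form $\omega = g\,du + h\,dv$. Choosing $\D_0$ to be a simply connected neighborhood of $t_0$, the Poincar\'e lemma then produces a function $\x : \D_0 \to \RR^4_2$, unique up to an additive constant vector, with $d\x = \omega$, that is $\x_u = g$ and $\x_v = h$. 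By construction $\x_u + \jj \x_v = \Phi$, so \eqref{Phi_def-tl} holds; and the additive constant is exactly a translation of $\RR^4_2$, which gives the asserted uniqueness up to translation.

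It then remains to verify that $(\D_0, \x)$ is a regular time-like surface in isothermal coordinates with $E < 0$, and here I would feed the two algebraic conditions on $\Phi$ back through the formulas already derived. The condition $\Phi^2 = 0$ yields $E = -G$ and $F = 0$ by the Proposition, so the coordinates are isothermal, while $\|\Phi\|^2 < 0$ gives $E = \frac{1}{2}\|\Phi\|^2 < 0$ by \eqref{EG-tl}, hence $G = -E > 0$. The one genuinely geometric point --- and the step I expect to need the most care --- is regularity: from $E = \x_u^2 < 0$ the vector $\x_u$ is nonzero and time-like, from $G = \x_v^2 > 0$ the vector $\x_v$ is nonzero and space-like, and $F = \x_u \cdot \x_v = 0$ makes them orthogonal; a time-like and a space-like vector so related are necessarily linearly independent, so $\x$ is an immersion at every point of $\D_0$ with indefinite induced metric, i.e. a time-like surface. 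The analytic heart of the whole argument is thus the one-line identification of the third condition in \eqref{Phi_cond-tl} with the integrability relation $g_v = h_u$; everything else is a matter of invoking the Poincar\'e lemma and the algebraic identities established before the theorem.
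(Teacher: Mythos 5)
Your proposal is correct and follows essentially the same route as the paper: both directions reduce the third condition in \eqref{Phi_cond-tl} to the single real integrability relation $\Re(\Phi)_v=\Im(\Phi)_u$, integrate locally to obtain $\x$ with $\x_u=\Re(\Phi)$, $\x_v=\Im(\Phi)$ (unique up to an additive constant, i.e.\ a translation), and then read off isothermality, the sign of $E$, and regularity from the two algebraic conditions. Your explicit justification of regularity via the linear independence of an orthogonal time-like/space-like pair merely spells out a step the paper leaves implicit.
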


\begin{proof}
As we noted before the theorem, the equality \eqref{Phi_def-tl} implies the equalities \eqref{Phi_cond-tl}.
Next we prove the inverse assertion. Let the function $\Phi(t)$ satisfy \eqref{Phi_cond-tl}.
From the definition of $\ds\frac{\partial\Phi}{\partial\bar t}$ we have:
\[
2\ds\frac{\partial\Phi}{\partial\bar t}=
(\Re(\Phi)_u-\Im(\Phi)_v)+\jj (-\Re(\Phi)_v+\Im(\Phi)_u) \,.
\]
It follows from the third condition in \eqref{Phi_cond-tl} that $\Im 2\ds\frac{\partial\Phi}{\partial\bar t} = 0$\,,
which gives:
\[ 
\Re(\Phi)_v=\Im(\Phi)_u \,.
\]
From here it follows that for any $t_0\in\D$ there exists a neighborhood $\D_0\subset\D$ of $t_0$ and a function 
$\x : \D_0 \to \RR^4_2$, such that:
\[ 
\Re(\Phi)=\x_u\,; \quad \Im(\Phi)=\x_v \,.
\]
The last equalities are equivalent to \eqref{Phi_def-tl}. The first two conditions in \eqref{Phi_cond-tl} give
$\x_u\cdot\x_v=0$ and $\x_u^2=-\x_v^2 < 0$. 
The last conditions mean that $(\D_0,\x)$ is a regular time-like surface in $\RR^4_2$ parametrized by isothermal 
coordinates $(u,v)$ with $E<0$. 
We note that the derivatives $\x_u$ and $\x_v$, are determined uniquely by \eqref{Phi_def-tl}. Hence the function
$\x(u,v)$ is determined up to an additive constant, which completes the proof.
\end{proof}

 Finally we obtain the transformation formulas for $\Phi$ under a change of the isothermal coordinates 
and under a motion of $\M=(\D,\x)$ in $\RR^4_2$.
Consider a change of the isothermal coordinates, which in complex form is given by: $t=t(s)$.
Since the map under a change of the isothermal coordinates is conformal in $\DD$, then the function $t(s)$ is
either holomorphic or anti-holomorphic. Denote by $\tilde\Phi(s)$ the function, corresponding to the new 
coordinates $s$. First we consider the holomorphic case. From the definition \eqref{Phi_def-tl} of $\Phi$ 
we have:
\begin{equation*}
\tilde\Phi(s)=2\frac{\partial\x}{\partial s}=
2\frac{\partial\x}{\partial t}\frac{\partial t}{\partial s}+
2\frac{\partial\x}{\partial \bar t}\frac{\partial \bar t}{\partial s}=
2\frac{\partial\x}{\partial t}\frac{\partial t}{\partial s}+ 2\frac{\partial\x}{\partial \bar t} 0=
2\frac{\partial\x}{\partial t}\frac{\partial t}{\partial s} \,.
\end{equation*}
Consequently, under a holomorphic change of the coordinates $t=t(s)$ we have:
\begin{equation}\label{Phi_s-hol-tl}
\tilde\Phi(s)=\Phi(t(s)) \frac{\partial t}{\partial s} \,.
\end{equation}
Analogously, in the anti-holomorphic case we have:
\begin{equation}\label{Phi_s-antihol-tl}
\tilde\Phi(s)=\bar\Phi(t(s)) \frac{\partial \bar t}{\partial s} \,.
\end{equation}
Especially, under the change $t=\bar s$, the function $\Phi$ is transformed as follows:
\begin{equation}\label{Phi_s-t_bs-tl}
\tilde\Phi(s)=\bar\Phi(\bar s) \,.
\end{equation}

 Using the above formulas for $\Phi$ we obtain the corresponding formulas for the coefficient $E$ 
of the first fundamental form of $\M$. Thus, under a holomorphic change $t=t(s)$, we get from 
\eqref{EG-tl} and \eqref{Phi_s-hol-tl}:
\begin{equation}\label{E_s-hol-tl}
\tilde E(s)=E(t(s)) |t'(s)|^2 .
\end{equation}
Since we use coordinates, in which $E(t)<0$ and $\tilde E(s)<0$, then the above equality implies that,
admissible changes are those of them satisfying the condition $|t'(s)|^2>0$\,.
 
 Under the change $t=\bar s$, from \eqref{EG-tl} and \eqref{Phi_s-t_bs-tl} it follows that:
\begin{equation}\label{E_s-t_bs-tl}
\tilde E(s)=E(\bar s) \,.
\end{equation}

 Taking into account the Cauchy-Riemann conditions \eqref{CR-DD} it follows that the Jacobian of a holomorphic 
change $t=t(s)$ is equal to $|t'(s)|^2$. Consequently, the conditions $E(t)<0$ and $\tilde E(s)<0$ imply that
the orientation of the surface is preserved under a holomorphic change. Under the change of the coordinates
$t=\bar s$, the orientation of the surface is converted and therefore it is changed under any anti-holomorphic 
change of the coordinates satisfying $\tilde E(s)<0$. 
Thus we have:
\begin{prop}\label{Orient-holo-tl}
Let $\M$ be a time-like surface in $\RR^4_2$ and let $t$ and $s$ give isothermal coordinates on $\M$,
such that the corresponding coefficients of the first fundamental forms satisfy $E(t)<0$ and $\tilde E(s)<0$\,.
Then: $t$ and $s$ generate one and the same orientation of $\M$ if and only if the change $t=t(s)$ is
holomorphic; $t$ and $s$ generate opposite orientations of $\M$, if and only if the change is anti-holomorphic. 
\end{prop}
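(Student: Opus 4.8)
The plan is to reduce the whole statement to a determination of the sign of the Jacobian of the transition map, treating the holomorphic and the anti-holomorphic cases separately. First I would note that, since both $t$ and $s$ are isothermal, the change $t=t(s)$ is conformal with respect to the indefinite metric and hence, as observed right after the Cauchy--Riemann conditions \eqref{CR-DD}, is either holomorphic or anti-holomorphic in $\DD$. Writing $s=s_1+\jj s_2$, orientation is a metric-independent notion, so $t$ and $s$ induce the same orientation of $\M$ exactly when the Jacobian $J=\partial(u,v)/\partial(s_1,s_2)$ of the transition map $(s_1,s_2)\mapsto(u,v)$ is positive, and opposite orientations when $J<0$. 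Thus it suffices to show $J>0$ in the holomorphic case and $J<0$ in the anti-holomorphic case, after which both biconditionals follow, since every admissible change is exactly one of the two types.

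For the holomorphic case I would reproduce the short computation already indicated in the text. Taking $s_1,s_2$ as the independent variables, the relations \eqref{CR-DD} read $v_{s_1}=u_{s_2}$ and $v_{s_2}=u_{s_1}$, whence $J=u_{s_1}v_{s_2}-u_{s_2}v_{s_1}=u_{s_1}^2-v_{s_1}^2=|t'(s)|^2$. Combining this with the transformation law \eqref{E_s-hol-tl}, namely $\tilde E(s)=E(t(s))\,|t'(s)|^2$, and the standing hypotheses $E<0$ and $\tilde E<0$, I obtain $|t'(s)|^2>0$, hence $J>0$ and the orientation is preserved.

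For the anti-holomorphic case I would set $r(s)=\bar t(s)$, which is holomorphic, and write $r=\rho_1+\jj\rho_2$. Since $r=u-\jj v$, we have $u=\rho_1$ and $v=-\rho_2$, so the transition Jacobian factors as $J=-\,\partial(\rho_1,\rho_2)/\partial(s_1,s_2)=-|r'(s)|^2$ by applying the holomorphic computation above to $r$. To see that $|r'(s)|^2>0$ I would combine \eqref{Phi_s-antihol-tl} with the conjugation-invariance of the norm $\|\bar\Phi\|^2=\|\Phi\|^2$ and with \eqref{EG-tl} to derive $\tilde E(s)=E(t(s))\,|r'(s)|^2$; the hypotheses $E,\tilde E<0$ then give $|r'(s)|^2>0$, so $J=-|r'(s)|^2<0$ and the orientation is reversed.

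I expect the main obstacle to be the sign bookkeeping in the anti-holomorphic case: one must recognize that complex conjugation in $\DD$, exactly as in $\CC$, contributes a factor $-1$ to the Jacobian, and must verify $\|\bar\Phi\|^2=\|\Phi\|^2$ so that the transformed coefficient $\tilde E$ retains the same sign as $E$ and the conclusion $|r'(s)|^2>0$ is legitimate. Once the two signs are pinned down, the equivalences stated in the proposition are immediate, because the dichotomy holomorphic/anti-holomorphic matches exactly the dichotomy orientation-preserving/orientation-reversing; the remaining steps are routine applications of the transformation formulas already established.
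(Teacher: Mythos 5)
Your proof is correct and follows essentially the same route as the paper: the Jacobian of a holomorphic change equals $|t'(s)|^2$ by the Cauchy--Riemann conditions \eqref{CR-DD}, its positivity is forced by \eqref{E_s-hol-tl} together with $E<0$, $\tilde E<0$, and the anti-holomorphic case is handled by splitting off a conjugation (the paper factors through the special change $t=\bar s$, you work directly with $r=\bar t$, which is the same idea). Your explicit derivation of $\tilde E(s)=E(t(s))\,|r'(s)|^2$ in the anti-holomorphic case is a small amount of detail the paper leaves implicit, but it changes nothing of substance.
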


Now let $\M=(\D,\x)$ and $\hat\M=(\D,\hat\x)$ be two time-like surfaces in $\RR^4_2$, parametrized by 
isothermal coordinates $t=u+\jj v$ in one and the same domain $\D \subset \DD$.
Suppose that $\hat\M$ is obtained from $\M$ through a motion (possibly improper) in $\RR^4_2$ by the formula:
\begin{equation}\label{hat_M-M-mov-tl}
\hat\x(t)=A\x(t)+\vb\,; \qquad A \in \mathbf{O}(2,2,\RR), \ \vb \in \RR^4_2 \,.
\end{equation}
We obtain from here the relation between the corresponding functions $\Phi$ and $\hat\Phi$: 
\begin{equation}\label{hat_Phi-Phi-mov-tl}
\hat\Phi(t)=A\Phi(t)\,; \qquad A \in \mathbf{O}(2,2,\RR) \,.
\end{equation}
Conversely, if $\Phi$ and $\hat\Phi$ are related by \eqref{hat_Phi-Phi-mov-tl}, then we have
$\hat\x_u=A\x_u$, $\hat\x_v=A\x_v$ and consequently the equality \eqref{hat_M-M-mov-tl} 
is satisfied. Therefore the relations \eqref{hat_M-M-mov-tl} and \eqref{hat_Phi-Phi-mov-tl} are equivalent.

 In $\RR^4_2$, apart from the usual motions, which are isometries, there exist also linear anti-isometries.
One such concrete anti-isometry is given by the formula:
\begin{equation}\label{antimov-tl}
A(\, x_1\,, x_2\,, x_3\,, x_4\,) = (\, x_2\,, x_1\,, x_4\,, x_3\,)\,.
\end{equation}
Since the product of two anti-isometries in $\RR^4_2$ is an isometry, then any anti-isometry is represented 
uniquely as a product of this concrete anti-isometry and a motion in $\RR^4_2$. We denote the set of all
linear anti-isometries in $\RR^4_2$ by $\mathbf{AO}(2,2,\RR)$.

 If $\M=(\D,\x(t))$ is a time-like surface in $\RR^4_2$ parametrized by isothermal coordinates $t$, 
and $\hat\M=(\D,\hat\x(t))$ is obtained from $\M$ through an anti-isometry, then the surface $\hat\M$ 
is also time-like and $t$ gives isothermal coordinates for $\hat\M$ as well. In this case we have $\hat\x^2_u=-\x^2_u>0$\,.
Since we use isothermal coordinates satisfying the condition $E=\x^2_u<0$, then we change the numbering
of the coordinates on $\hat\M$, which in terms of the double numbers in $\DD$ means making a change of the type
$t=\jj s$.
This is written  through formulas of the following type:
\begin{equation}\label{hat_M-M-antimov-tl}
\hat\x(s)=A\x(\jj s)+\vb\,; \qquad A \in \mathbf{AO}(2,2,\RR), \ \vb \in \RR^4_2 \,.
\end{equation}
Using \eqref{Phi_s-hol-tl}, for the functions $\Phi$ and $\hat\Phi$ we find:
\begin{equation}\label{hat_Phi-Phi-antimov-tl}
\hat\Phi(s)=\jj A\Phi(\jj s)\,; \qquad A \in \mathbf{AO}(2,2,\RR) \,.
\end{equation}
As in the case of motions, it follows that the relations \eqref{hat_M-M-antimov-tl} and \eqref{hat_Phi-Phi-antimov-tl} 
are equivalent.

%===================================================================================================================================

\section{Characterization of the minimal time-like surfaces in $\RR^4_2$ through $\Phi$ and its primitive $\Psi$.}\label{sect_Phi_Psi_R42-tl}
 
Let $\M=(\D,\x)$ be a time-like surface in $\RR^4_2$ parametrized by isothermal coordinates $t=u+\jj v$,
and $\Phi$ be the function, defined by \eqref{Phi_def-tl}. First we find the condition for $\M$ to be minimal through 
the function $\Phi$. For this purpose, we introduce the orthonormal tangent frame field $(\vX_1,\vX_2)$ , 
where $\vX_1$ and $\vX_2$ are the unit vector fields oriented as the coordinate vectors $\x_u$ and $\x_v$, respectively:
\begin{equation}\label{vX1_vX2-def-tl}
\vX_1=\frac{\x_u}{\sqrt {-E}}\,; \quad \vX_2=\frac{\x_v}{\sqrt G}=\frac{\x_v}{\sqrt {-E}}\:.
\end{equation}

From \eqref{Phi_def-tl} we express the coordinate vectors $\x_u$ and $\x_v$ through $\Phi$:
\begin{equation}\label{xuxv-tl}
\begin{array}{ll}
\x_u= \Re (\Phi)=\ds\frac{1}{2}(\Phi+\bar\Phi)\:,\\[2ex]
\x_v= \Im (\Phi)=\ds\frac{1}{2\jj}(\Phi-\bar\Phi)=\ds\frac{\jj}{2}(\Phi-\bar\Phi)\,.
\end{array}
\end{equation}

 Differentiating the equality \eqref{Phi2-tl}, we get:
\begin{equation}\label{Phi.dPhi_dbt-tl} 
\Phi\cdot\frac{\partial\Phi}{\partial\bar t}=0 \,.
\end{equation}
According to \eqref{dPhi_dbt-tl} the derivative $\ds\frac{\partial\Phi}{\partial\bar t}$
is a real function and applying complex conjugation to \eqref{Phi.dPhi_dbt-tl} we obtain:
\begin{equation}\label{bPhi.dPhi_dbt-tl} 
\bar\Phi\cdot\frac{\partial\Phi}{\partial\bar t}=0 \,.
\end{equation}
Formulas \eqref{xuxv-tl} show that $\Phi$ and $\bar\Phi$ form a basis of $T_{p,\DD}(M)$ at any point $p\in\M$.
Then it follows from \eqref{Phi.dPhi_dbt-tl} and \eqref{bPhi.dPhi_dbt-tl} that $\ds\frac{\partial\Phi}{\partial\bar t}$
is a vector, orthogonal to $T_p(\M)$ and therefore:
\begin{equation}\label{dPhi_dbt_inN-tl} 
\frac{\partial\Phi}{\partial\bar t} \in N_p(\M) \,.
\end{equation}
Using \eqref{dPhi_dbt_inN-tl} and \eqref{dPhi_dbt-tl} we get:
\begin{equation*}
\begin{array}{rl}\ds
\frac{\partial\Phi}{\partial\bar t}\!\! &=
\ds\left(\frac{\partial\Phi}{\partial\bar t}\right)^\bot=
\frac{1}{2}(\Delta^h \x )^\bot=
\frac{1}{2}(\x_{uu}-\x_{vv} )^\bot=
\frac{1}{2}(\nabla_{\x_u} \x_u - \nabla_{\x_v} \x_v )^\bot\\[2.5ex]
&=\ds\frac{1}{2}(\sigma(\x_u,\x_u) - \sigma(\x_v,\x_v) )=
E\; \frac{1}{2}(-\sigma(\vX_1,\vX_1) + \sigma(\vX_2,\vX_2) ) = E\vH \,.
\end{array}
\end{equation*}
Finally we have:
\begin{equation}\label{dPhi_dbt-Delta_x-EH-tl}
\frac{\partial\Phi}{\partial\bar t}=\frac{1}{2}\Delta^h \x = E\vH \,.
\end{equation}

The last equalities imply the following statement:
\begin{theorem}\label{Min_x_Phi-thm-tl}
Let $\M=(\D,\x)$ be a time-like surface in $\RR^4_2$ parametrized by isothermal coordinates $(u,v)\in \D$,
and $\Phi(t)$ be the vector function in $\D$ given by \eqref{Phi_def-tl}.
Then the following three conditions are equivalent:
\begin{enumerate}
	\item The function $\Phi(t)$ is holomorphic: \ $\ds\frac{\partial\Phi}{\partial\bar t}= 0$\,.%\\%[1ex]
	\item The function $\x (u,v)$ is hyperbolically harmonic: \ $\Delta^h \x = 0$\,.
	\item $\M=(\D,\x)$ is a minimal time-like surface in $\RR^4_2$: \ $\vH=0$\,.  
\end{enumerate}
\end{theorem}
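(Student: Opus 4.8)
The plan is to deduce all three equivalences directly from the chain of identities \eqref{dPhi_dbt-Delta_x-EH-tl} established immediately before the statement, namely
\[
\frac{\partial\Phi}{\partial\bar t}=\frac{1}{2}\Delta^h \x = E\vH\,.
\]
No further geometric input is needed: the entire content of the theorem is already packaged into this triple equality, so the proof reduces to inspecting when each of the three members vanishes.

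First I would record the equivalence of (1) and (2). The first two members give $\frac{\partial\Phi}{\partial\bar t}=\frac{1}{2}\Delta^h\x$, whence $\frac{\partial\Phi}{\partial\bar t}=0$ holds precisely when $\Delta^h\x=0$. By the definition of holomorphy in $\DD$ recalled in the Preliminaries, the condition $\frac{\partial\Phi}{\partial\bar t}=0$ is exactly the holomorphy of $\Phi$, which identifies (1) with (2).

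Next I would pass to the equivalence of (2) and (3) using the last two members $\frac{1}{2}\Delta^h\x=E\vH$. Here the only point requiring attention, and the sole place where the isothermal normalization enters, is that $E<0$, hence $E\neq 0$. Consequently $E\vH=0$ forces $\vH=0$, and conversely $\vH=0$ yields $E\vH=0$; combined with $\frac{1}{2}\Delta^h\x=E\vH$ this shows $\Delta^h\x=0$ if and only if $\vH=0$, i.e. (2) $\Leftrightarrow$ (3). Chaining the two equivalences closes the cycle and establishes that all three conditions coincide.

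Since every nontrivial computation, in particular the verification that $\frac{\partial\Phi}{\partial\bar t}$ lies in the normal space $N_p(\M)$ and equals $E\vH$, was already carried out in deriving \eqref{dPhi_dbt-Delta_x-EH-tl}, there is no genuine obstacle left. The one thing not to overlook is the nonvanishing of $E$, guaranteed by the standing assumption $E<0$: without it the step from $E\vH=0$ to $\vH=0$ would break down at points with $E=0$, but such points cannot occur for a regular time-like surface in isothermal coordinates.
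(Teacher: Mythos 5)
Your proposal is correct and follows exactly the paper's route: the theorem is stated immediately after the chain of identities \eqref{dPhi_dbt-Delta_x-EH-tl}, and the paper likewise derives all three equivalences directly from it. Your explicit remark that $E<0$ (hence $E\neq 0$) is needed to pass from $E\vH=0$ to $\vH=0$ is a point the paper leaves implicit, but it is the same argument.
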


\smallskip

The equality \eqref{Phi_def-tl} implies further:
\begin{equation}\label{dPhi_dt_bot-tl}
\frac{\partial\Phi}{\partial t}=
\frac{\x_{uu} + \x_{vv}}{2} + \jj \x_{uv}\,; \quad 
\left(\frac{\partial\Phi}{\partial t}\right)^\bot\!\!=
\frac{\sigma (\x_u,\x_u)+\sigma (\x_v,\x_v)}{2} + \jj \sigma (\x_u,\x_v)\,.
\end{equation}

 In the case of a minimal time-like surface, $\Phi$ is a holomorphic function, i.e.
$\ds\frac{\partial\Phi}{\partial\bar t}=0$,
and for $\ds\frac{\partial\Phi}{\partial t}$ we use the usual denotation $\Phi'$.

\smallskip
The condition $\vH=\frac{1}{2}(-\sigma(\vX_1,\vX_1)+\sigma(\vX_2,\vX_2))=0$ for $\M$ to be minimal
gives that: 
\begin{equation}\label{sigma_22-tl}
\sigma(\vX_2,\vX_2)=\sigma(\vX_1,\vX_1)\,; \qquad  \sigma(\x_v,\x_v)=\sigma(\x_u,\x_u)\,.
\end{equation}

 Then $\Phi'$ and its orthogonal projection into $N_{p,\DD}(\M)$ are:
\begin{equation}\label{PhiPr-tl}
\Phi^\prime=\frac{\partial\Phi}{\partial u}=\x_{uu}+\jj\x_{uv}\,; \quad
\Phi^{\prime \bot}=\x_{uu}^\bot +\jj \x_{uv}^\bot =\sigma (\x_u,\x_u)+\jj\sigma (\x_u,\x_v)\,.
\end{equation}

 From here we can express $\sigma (\x_u,\x_u)$, $\sigma (\x_v,\x_v)$ and $\sigma (\x_u,\x_v)$ through $\Phi$:
\begin{equation}\label{sigma_uu_uv-tl}
\begin{array}{l}
\sigma(\x_u,\x_u) =  \Re (\Phi^{\prime \bot}) = \,\ds\frac{1}{2}\,(\Phi^{\prime \bot}+\overline{\Phi^{\prime \bot}})=
\ds\frac{1}{2}(\Phi^{\prime \bot}+{\overline{\Phi^\prime}}^\bot)\,,\\[4mm]

\sigma(\x_v,\x_v) =  \Re (\Phi^{\prime \bot}) = \,\ds\frac{1}{2}\,(\Phi^{\prime \bot}+\overline{\Phi^{\prime \bot}})=
\ds\frac{1}{2}(\Phi^{\prime \bot}+{\overline{\Phi^\prime}}^\bot)\,,\\[4mm]

\sigma(\x_u,\x_v) = \Im (\Phi^{\prime \bot})=
\ds\frac{1}{2\jj}(\Phi^{\prime \bot}-\overline{\Phi^{\prime \bot}})=
\ds\frac{\jj}{2}(\Phi^{\prime \bot}-{\overline{\Phi^\prime}}^\bot)\,.
\end{array}
\end{equation}

\medskip

 In view of Theorem \ref{Min_x_Phi-thm-tl}\,, for a minimal time-like surface $\M=(\D,\x)$, 
the position vector function $\x$ is hyperbolically harmonic and therefore we can introduce locally
the harmonically conjugate function $\y$ of $\x$ via the Cauchy-Riemann conditions: 
$\y_u=\x_v$ and $\y_v=\x_u$. 
Let $\Psi$ be the $\DD^4_2$-valued vector function given by:
\begin{equation}\label{Psi-def-tl}
\Psi=\x+\jj\y \,.
\end{equation}
The function $\Psi$ is holomorphic and the functions $\x$ and $\Phi$ are expressed by $\Psi$ as follows:
\begin{equation}\label{x_Phi_Psi-tl}
\x=\Re\Psi \,; \qquad \Phi=\x_u+\jj\x_v=\x_u+\jj\y_u=\frac{\partial\Psi}{\partial u}=\Psi' .
\end{equation}
 Next we express the condition for $\M$ to be minimal through the function $\Psi$:

\begin{theorem}\label{Min_x_Psi-thm-tl}
Let $\M=(\D,\x)$ be a minimal time-like surface in $\RR^4_2$, parametrized by isothermal coordinates $(u,v)\in \D$.
Then\, $\x$ is given locally in the form:
\begin{equation}\label{x_Psi-tl}
\x(u,v)=\Re\Psi(t) \,,
\end{equation}
where $\Psi$ is a holomorphic $\DD^4_2$-valued function of $t=u+\jj v$, and satisfies the conditions:
\begin{equation}\label{Psi_cond-tl}
\Psi'^{\,2}=0\,; \qquad \|\Psi'\|^2 < 0\,.
\end{equation}

 Conversely, if $\Psi$ is a holomorphic $\DD^4_2$-valued function, defined in a domain $\D\subset \DD$, and
satisfying the condition \eqref{Psi_cond-tl}, then the pair $(\D,\x)$, where $\x$ is given by \eqref{x_Psi-tl}, 
is a minimal time-like surface in $\RR^4_2$ in isothermal coordinates $(u,v)$.
\end{theorem}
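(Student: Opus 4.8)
The plan is to prove the two directions separately, in each case routing everything through the function $\Phi$ and invoking the two theorems already established in this section. For the direct statement, I would begin from the hypothesis that $\M$ is minimal, so by Theorem \ref{Min_x_Phi-thm-tl} the position vector $\x$ is hyperbolically harmonic, $\Delta^h\x=0$. The key preliminary step is to produce the harmonic conjugate $\y$: since $\x$ is hyperbolically harmonic, the system $\y_u=\x_v$, $\y_v=\x_u$ is locally integrable, because its compatibility condition $\x_{vv}=\x_{uu}$ is exactly $\Delta^h\x=0$; hence on a simply connected neighborhood of a point such a $\y$ exists, which is precisely why the representation is only local. Setting $\Psi=\x+\jj\y$ as in \eqref{Psi-def-tl}, the relations $\y_u=\x_v$, $\y_v=\x_u$ are the Cauchy--Riemann conditions \eqref{CR-DD} for $\Psi$, so $\Psi$ is holomorphic, $\x=\Re\Psi$, and $\Psi'=\x_u+\jj\y_u=\x_u+\jj\x_v=\Phi$ as recorded in \eqref{x_Phi_Psi-tl}. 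The two required identities $\Psi'^{\,2}=0$ and $\|\Psi'\|^2<0$ are then nothing but the first two conditions of \eqref{Phi_cond-tl} from Theorem \ref{x_Phi-thm-tl}, read off for $\Phi=\Psi'$.

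For the converse, suppose $\Psi$ is holomorphic on $\D$ with $\Psi'^{\,2}=0$ and $\|\Psi'\|^2<0$, and put $\x=\Re\Psi$ and $\Phi=\Psi'$. First I would verify that $\Phi$ satisfies all three conditions \eqref{Phi_cond-tl}: the first two are the hypotheses on $\Psi'$, while the third holds because $\Phi=\Psi'$ is again holomorphic (the derivative of a holomorphic function), so $\partial\Phi/\partial\bar t=0$, and consequently $\partial\bar\Phi/\partial t=\overline{\partial\Phi/\partial\bar t}=0$ as well. By the converse part of Theorem \ref{x_Phi-thm-tl}, $(\D,\x)$ is then a regular time-like surface in isothermal coordinates with $E<0$, and the function associated to it by \eqref{Phi_def-tl} is exactly $\Phi$: writing $\Psi=\x+\jj\y$ and using \eqref{CR-DD} gives $\x_u+\jj\x_v=\x_u+\jj\y_u=\Psi'=\Phi$. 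Finally, since $\Phi$ is holomorphic, Theorem \ref{Min_x_Phi-thm-tl} yields $\vH=0$, so the surface is minimal, which closes the argument.

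I expect most of the work to be bookkeeping that translates statements about $\Phi$ into statements about its primitive $\Psi$, with the two prior theorems doing the heavy lifting. The only genuinely delicate point is the local existence of the harmonic conjugate $\y$ in the direct statement: it rests on the integrability of $\y_u=\x_v$, $\y_v=\x_u$, which is guaranteed precisely by hyperbolic harmonicity and forces the representation to be local rather than global. Beyond this I anticipate no analytic obstruction, since both the regularity and the isothermal character of the parametrization are controlled by the single inequality $\|\Psi'\|^2<0$, which via \eqref{EG-tl} gives $E=\frac{1}{2}\|\Phi\|^2<0$ and hence a nondegenerate time-like tangent plane.
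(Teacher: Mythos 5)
Your proposal is correct and follows essentially the same route as the paper: construct the harmonic conjugate $\y$ (whose local existence the paper also justifies, just before the theorem, by hyperbolic harmonicity of $\x$), set $\Psi=\x+\jj\y$ so that $\Phi=\Psi'$, and translate \eqref{Psi_cond-tl} into \eqref{Phi_cond-tl} so that Theorems \ref{x_Phi-thm-tl} and \ref{Min_x_Phi-thm-tl} do the work in both directions. The only difference is that you spell out the integrability condition $\x_{vv}=\x_{uu}$ explicitly, which the paper leaves implicit.
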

\begin{proof}
If $\M=(\D,\x)$ is a minimal time-like surface in $\RR^4_2$, then $\Psi$, defined by \eqref{Psi-def-tl},
satisfies \eqref{x_Phi_Psi-tl}. The properties \eqref{Psi_cond-tl} are equivalent to the corresponding
properties \eqref{Phi_cond-tl} of $\Phi$.

 Conversely, if $\Psi$ is a holomorphic $\DD^4_2$-valued function with the properties \eqref{Psi_cond-tl}, 
then the functions $\x=\Re\Psi$ and $\Phi=\Psi'$ satisfy the relation $\Phi=\x_u+\jj\x_v$. Applying 
Theorem \ref{x_Phi-thm-tl}, it follows that $(\D,\x)$ is a regular time-like surface in $\RR^4_2$, parametrized 
by isothermal coordinates $(u,v)$. Since $\x$ is hyperbolically harmonic, then according to Theorem
\ref{Min_x_Phi-thm-tl}, $(\D,\x)$ is a minimal surface. 
\end{proof}

 Next we consider how the function $\Psi$ is transformed under a change of the isothermal coordinates 
and under a motion of the surface $\M$ in $\RR^4_2$.

Under a holomorphic change of the coordinates $t=t(s)$ we have $\x(t(s))=\Re\Psi(t(s))$. 
Since the function $\Psi(t(s))$ is holomorphic, then $\Psi(t)$  is changed into $\Psi(t(s))$. 
Any anti-holomorphic change can be reduced to a holomorphic change and the special change $t=\bar s$. 
Under the latter change we have $\x(\bar s)=\Re\bar\Psi(\bar s)$ and therefore the function $\Psi (t)$ 
is transformed into $\bar\Psi(\bar s)$. 
 
Let $\M=(\D,\x)$ and $\hat\M=(\D,\hat\x)$ be two minimal time-like surfaces, parametrized by isothermal 
coordinates.  These surfaces are related by a motion (possibly improper) in $\RR^4_2$ by the formula 
$\hat\x(t)=A\x(t)+\vb$, where $A \in \mathbf{O}(2,2,\RR)$ and $\vb \in \RR^4_2$, if and only if the
functions $\Psi$ and $\hat\Psi$ are related by the equality $\hat\Psi (t)=A\Psi (t)+\vb$.

 Let us consider also the case, when $\hat\M=(\D,\hat\x)$ is obtained from $\M=(\D,\x)$ through an anti-isometry,
which according to \eqref{hat_M-M-antimov-tl} is of the type $\hat\x(s)=A\x(\jj s)+\vb$.
Then we have $\hat\x(s)=\Re (A\Psi(\jj s) + \vb)$, from where it follows that $\hat\Psi (s)=A\Psi (\jj s)+\vb$. 

\medskip

 Theorem \ref{Min_x_Psi-thm-tl} shows that from a given minimal time-like surface $\M=(\D,\x)$ we can (at least locally)
obtain another minimal time-like surfaces modifying the function $\Psi$. 
For example, if $k= \rm{const}>0$, then the function $k\Psi$ satisfies the conditions \eqref{Psi_cond-tl}
and therefore Theorem \ref{Min_x_Psi-thm-tl} is applicable to $(\D,\hat\x=\Re( k\Psi))$.
In this way we obtained a new minimal time-like surface $\hat\M$ in $\RR^4_2$, for which: $\hat\x=k\x$.   
The last equality means that $\hat\M$ is obtained from $\M$ through a homothety with coefficient $k$.
Thus we have:
\begin{prop}\label{Min_Surf_Hom-tl}
Let $\M$ be a minimal time-like surface in $\RR^4_2$, parametrized by isothermal coordinates $(u,v)$, with $E<0$\,.
If $\hat\M$ is obtained from $\M$ through a homothety with coefficient $k$, 
then $\hat\M$ is also a minimal time-like surface, parametrized by isothermal coordinates $(u,v)$.
The corresponding functions $\hat\Phi$ and $\hat E$ of $\hat\M$ are given by:
\begin{equation}\label{hat_Phi-Phi-hmt-tl}
\hat\Phi(t)=k\Phi(t)\,; \qquad  \hat E(t) = k^2 E(t)\,.
\end{equation}
\end{prop}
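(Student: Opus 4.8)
The plan is to verify the two claimed formulas by direct computation, working from the definitions established in Theorem \ref{Min_x_Psi-thm-tl} and the representation $\x = \Re\Psi$. The statement asserts that a homothety with coefficient $k$ produces a new minimal time-like surface with $\hat\Phi = k\Phi$ and $\hat E = k^2 E$, so the task is really to confirm that the construction $\hat\x = k\x$ stays inside the class to which our theory applies and then to track the two invariants through the scaling.

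First I would argue that $\hat\M$ is again a minimal time-like surface in isothermal coordinates. Set $\hat\Psi = k\Psi$. Since $\Psi$ is holomorphic and $k$ is a positive real constant, $\hat\Psi$ is holomorphic as well, and $\hat\Psi' = k\Psi' = k\Phi$. I then check the two conditions \eqref{Psi_cond-tl}: from $\hat\Psi'^{\,2} = k^2\Psi'^{\,2} = 0$ we get the first, and from $\|\hat\Psi'\|^2 = \hat\Psi'\cdot\overline{\hat\Psi'} = k^2\,\Psi'\cdot\bar\Psi' = k^2\|\Psi'\|^2 < 0$ (using $k>0$, so $k^2>0$) we get the second. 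Hence Theorem \ref{Min_x_Psi-thm-tl} applies to $\hat\Psi$ and guarantees that $(\D,\hat\x)$ with $\hat\x = \Re\hat\Psi = \Re(k\Psi) = k\Re\Psi = k\x$ is a minimal time-like surface in isothermal coordinates $(u,v)$. This confirms the geometric description: $\hat\x = k\x$ is precisely the homothety with coefficient $k$.

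Next I would read off the two transformation formulas. By \eqref{x_Phi_Psi-tl} applied to $\hat\M$ we have $\hat\Phi = \hat\Psi' = k\Psi' = k\Phi$, which is the first equality in \eqref{hat_Phi-Phi-hmt-tl}; equivalently one sees this directly from $\hat\Phi = \hat\x_u + \jj\hat\x_v = k(\x_u + \jj\x_v) = k\Phi$. For the coefficient of the first fundamental form I would invoke \eqref{EG-tl}, which gives $E = \tfrac12\|\Phi\|^2$. Applying this to $\hat\M$ and substituting $\hat\Phi = k\Phi$ yields
\[
\hat E = \tfrac12\|\hat\Phi\|^2 = \tfrac12\|k\Phi\|^2 = \tfrac12\,k^2\|\Phi\|^2 = k^2 E\,,
\]
which is the second equality in \eqref{hat_Phi-Phi-hmt-tl}. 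Since $E<0$ and $k^2>0$, we have $\hat E<0$, consistent with our standing sign convention on isothermal coordinates.

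I do not anticipate a genuine obstacle here; the proposition is essentially a bookkeeping consequence of the linearity of the $\Psi$-representation together with the homogeneity of the quadratic and Hermitian forms $\Psi'^{\,2}$ and $\|\Psi'\|^2$ under real scaling. The only point deserving care is the role of the hypothesis $k>0$: positivity of $k$ (hence $k^2>0$) is what preserves both the sign condition $\|\Phi\|^2<0$ in \eqref{Psi_cond-tl} and the orientation via Proposition \ref{Orient-holo-tl}, ensuring $\hat\M$ genuinely falls within the admissible class rather than an anti-isometric copy. Everything else reduces to substituting $\hat\Phi = k\Phi$ into the previously established formulas for $\Phi$ and $E$.
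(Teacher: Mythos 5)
Your proof is correct and follows essentially the same route as the paper: the paper's argument likewise sets $\hat\Psi = k\Psi$, notes (in the remarks immediately preceding the proposition) that the conditions \eqref{Psi_cond-tl} are preserved so that Theorem \ref{Min_x_Psi-thm-tl} applies, and then obtains $\hat\Phi=(k\Psi)'=k\Phi$ and $\hat E=k^2E$ from \eqref{EG-tl}. Your version merely spells out the verification of \eqref{Psi_cond-tl} and the role of $k>0$ more explicitly, which is fine.
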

\begin{proof}
The notes before the proposition show that the notions of a minimal time-like surface and isothermal 
coordinates are invariant under a homothety in $\RR^4_2$.  
The first equality in \eqref{hat_Phi-Phi-hmt-tl} follows from $\hat\Phi=(k\Psi)'=k\Phi$.
The second equality follows from the first one and from \eqref{EG-tl}. 
\end{proof}

 Any minimal time-like surface generates another minimal time-like surface in the following way:  
Consider the harmonic conjugate function $\y$ to $\x$. The equality $\x=\Re\Psi$ implies that $\y=\Re(\jj\Psi)$. 
The function $\jj\Psi$ satisfies the conditions $(\jj\Psi')^{2}=0$ and $\|\jj\Psi'\|^2 = -\|\Psi'\|^2 > 0$.
In order to apply Theorem \ref{Min_x_Psi-thm-tl} to $\y$ and $\jj\Psi$, it is necessary to make change of 
the isothermal coordinates of the type $t=\jj s$. Then $\hat\Psi(s)=\jj\Psi(\jj s)$ satisfies the conditions 
\eqref{Psi_cond-tl} and Theorem \ref{Min_x_Psi-thm-tl} gives that $\y(\jj s)=\Re(\jj\Psi(\jj s))$ determines
a minimal time-like surface in $\RR^4_2$. Since the considerations are local, it follows that $\y$ is defined
in the whole domain $\jj\D$.
Now we can give the following:
\begin{dfn}\label{Conj_Min_Surf-tl} 
Let $\M=(\D,\x(t))$ be a minimal time-like surface in $\RR^4_2$, parametrized by isothermal coordinates $t\in\D$.
The surface $\bar\M=(\jj\D,\y(\jj s))$, where $\y$ is (hyperbolically) harmonic conjugate to $\x$,
is said to be the \textbf{conjugate minimal surface} to $\M=(\D,\x(t))$. 
\end{dfn}
Taking into account the transformation properties of $\Psi$, we conclude that the function $\y$ is invariant under
a holomorphic change of the isothermal coordinates, while under an anti-holomorphic change the function $\y$ is
changed into $-\y$. Since the harmonic conjugate to a given function is determined uniquely up to an
additive constant, this means geometrically that the conjugate minimal surface of a given minimal surface is 
determined locally up to a motion in $\RR^4_2$.

If $\hat\Phi(s)$ and $\hat E(s)$ are the corresponding functions of $\bar\M$, then from \eqref{EG-tl} we have:
\begin{equation}\label{Phi_conj-tl}
\hat\Phi(s) = (\jj\Psi(\jj s))' = \Phi(\jj s)\,; \qquad \|\hat\Phi(s)\|^2=\|\Phi(\jj s)\|^2\,; \qquad 
\hat E(s) = E(\jj s)\,.
\end{equation}

\medskip

 Further, in the above construction of the conjugate minimal surface we replace $\jj$ in formula $\jj\Psi$ 
by an arbitrary number of the type $\e^{\jj\theta}$, where $\theta \in \RR$. Since the function
$\e^{\jj\theta}\Psi$ satisfies the conditions \eqref{Psi_cond-tl}, then we obtain a one-parameter family 
of minimal time-like surfaces:
\begin{equation}\label{1-param_family-tl}
\x_\theta = \Re \e^{\jj\theta}\Psi = \x\cosh\theta + \y\sinh\theta \,.
\end{equation}
We give the following
\begin{dfn}\label{1-param_family_assoc_surf-tl} 
Let $\M=(\D,\x)$ be a minimal time-like surface in $\RR^4_2$, parametrized by isothermal coordinates $(u,v)\in\D$.
The family $\M_\theta=(\D,\x_\theta)$, where $\x_\theta$ is given by \eqref{1-param_family-tl} and 
$\theta$ is a real parameter, is said to be a one parameter family of minimal surfaces 
\textbf{associated} to the given one.  
\end{dfn}
Here we have to note that the one-parameter family of associated minimal surfaces is determined locally 
up to a motion in $\RR^4_2$.

 In contrast to the case of the space-like surfaces in $\RR^4_2$, the conjugate minimal time-like surface 
does not belong to the one-parameter family of the associated  minimal time-like surfaces, since 
$\jj \neq \e^{\jj\theta}$ for $\theta \in \RR $.

If $\Phi_\theta(t)$ and $E_\theta(t)$ are the corresponding functions for $\M_\theta$, then from \eqref{EG-tl}
we have:
\begin{equation}\label{Phi_1-param_family-tl}
\Phi_\theta(t) = (\e^{\jj\theta}\Psi(t))' = \e^{\jj\theta}\Phi(t)\,; \qquad \|\Phi_\theta(t)\|^2=\|\Phi(t)\|^2\,;
\qquad  E_\theta(t) = E(t)\,.
\end{equation}

\medskip

 A basic property of this family is that any two surfaces of the family are isometric to each other.
\begin{prop}\label{Isom_M_theta-M-tl}
Let $\M=(\D,\x)$ be a minimal time-like surface in $\RR^4_2$, parametrized by isothermal coordinates $(u,v)\in\D$
and $\M_\theta=(\D,\x_\theta)$ be the corresponding one-parameter family of associated minimal time-like surfaces.
The map $\mathcal{F}_\theta: \x(u,v)\rightarrow \x_\theta(u,v)$ gives an \textbf{isometry} between
$\M$ and $\M_\theta$ for any $\theta$.
\end{prop}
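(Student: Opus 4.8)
The plan is to reduce the claim to the equality of the first fundamental forms, since $\mathcal{F}_\theta$ is, in the common isothermal coordinates $(u,v)\in\D$, nothing but the map induced by the identity of the parameter domain. Indeed, both $\M$ and $\M_\theta$ are defined over the same domain $\D$, and $\mathcal{F}_\theta$ carries the point with parameters $(u,v)$ on $\M$ to the point with the same parameters $(u,v)$ on $\M_\theta$. For such a map, being an isometry is precisely the requirement that the induced first fundamental forms $\mathbf{I}$ of $\M$ and $\mathbf{I}_\theta$ of $\M_\theta$ coincide as quadratic forms in $du,dv$.

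First I would record, via \eqref{Idt-tl} (equivalently \eqref{EG-tl}), that
\begin{equation*}
\mathbf{I}=\tfrac{1}{2}\|\Phi\|^2\,(du^2-dv^2)=E\,(du^2-dv^2),\qquad
\mathbf{I}_\theta=\tfrac{1}{2}\|\Phi_\theta\|^2\,(du^2-dv^2)=E_\theta\,(du^2-dv^2),
\end{equation*}
so that the two forms differ only through the coefficients $E$ and $E_\theta$. Then I would invoke the transformation law \eqref{Phi_1-param_family-tl}, already established for the associated family, which gives $\Phi_\theta=\e^{\jj\theta}\Phi$. Since $\overline{\e^{\jj\theta}}=\e^{-\jj\theta}$ in $\DD$, this yields
\begin{equation*}
\|\Phi_\theta\|^2=\Phi_\theta\bar\Phi_\theta=(\e^{\jj\theta}\Phi)(\e^{-\jj\theta}\bar\Phi)=\Phi\bar\Phi=\|\Phi\|^2,\qquad E_\theta=E.
\end{equation*}
Substituting into the two displays above gives $\mathbf{I}_\theta=\mathbf{I}$, which is the desired conclusion.

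The computation is immediate, so there is no genuine analytic obstacle; the one point that deserves a line of care is the bookkeeping that makes the statement meaningful. Namely, I would note that $\|\Phi_\theta\|^2=\|\Phi\|^2<0$ by \eqref{modPhi2-tl}, so that $\M_\theta$ is again a regular time-like surface with $E_\theta<0$ and $(u,v)$ remains isothermal on it; this guarantees that $\mathcal{F}_\theta$ is a well-defined diffeomorphism between the two surfaces and not merely a formal identification of parameters. With this in place, the equality of the fundamental forms established above shows that $\mathcal{F}_\theta$ preserves the metric, i.e. it is an isometry, for every $\theta\in\RR$.
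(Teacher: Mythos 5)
Your proof is correct and follows essentially the same route as the paper: the paper likewise observes that $\mathcal{F}_\theta$ is the identity in the coordinates $(u,v)$ and then invokes the equality $E_\theta=E$ already established in \eqref{Phi_1-param_family-tl} from $\Phi_\theta=\e^{\jj\theta}\Phi$. Your additional remark that $\|\Phi_\theta\|^2<0$ keeps $(u,v)$ isothermal on $\M_\theta$ is a harmless elaboration of what the paper records just before Definition \ref{1-param_family_assoc_surf-tl}.
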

\begin{proof}
The map $\mathcal{F}_\theta$, written in local coordinates $(u,v)$, coincides with the identity in $\D$. In
\eqref{Phi_1-param_family-tl} we proved that $E_\theta = E$, which shows that $\mathcal{F}_\theta$ is an 
isometry. 
\end{proof}

 If $\bar\M=(\D,\y(t))$ is the surface conjugate to the given minimal time-like surface $\M=(\D,\x(t))$, 
then there exists also a natural map between $\M$ and $\bar\M$, given by the formula 
$\mathcal{F}: \x(t) \rightarrow \y(t)$. The formulas before Definition \ref{Conj_Min_Surf-tl} show that
the corresponding coefficients satisfy the relation $\hat E(t)=-E(t)$. This means that $\mathcal{F}$ is 
an anti-isometry. Thus we have:
\begin{prop}\label{AntiIsom_conj_M-M-tl}
Let $\M=(\D,\x)$ be a minimal time-like surface in $\RR^4_2$, parametrized by isothermal coordinates $(u,v)\in\D$,
and $\bar\M=(\D,\y)$ be its conjugate minimal time-like surface. Then the map 
$\mathcal{F}: \x(u,v)\rightarrow \y(u,v)$ gives an \textbf{anti-isometry} between $\M$ and $\bar\M$.
\end{prop}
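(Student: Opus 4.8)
The plan is to read off the coefficients of the first fundamental form of $\bar\M=(\D,\y)$ from the defining relations of the harmonic conjugate and then compare the two fundamental forms. First I would recall that $\y$ is the hyperbolically harmonic conjugate of $\x$, so that the analogue of the Cauchy--Riemann conditions in $\DD$ gives $\y_u=\x_v$ and $\y_v=\x_u$. Using these relations together with the fact that $(u,v)$ are isothermal on $\M$ (so that $E=-G$ and $F=0$), I would compute
\[
\hat E=\y_u^2=\x_v^2=G=-E\,,\qquad \hat F=\y_u\cdot\y_v=\x_v\cdot\x_u=F=0\,,\qquad \hat G=\y_v^2=\x_u^2=E=-G\,.
\]
In particular $\bar\M$ is again a time-like surface in isothermal coordinates and $\hat E=-E$, which is exactly the relation quoted before the statement.

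Next I would express the two fundamental forms via \eqref{Idt-tl}: for $\M$ one has $\mathbf I=E\,(du^2-dv^2)$, while the same formula applied to $\bar\M$ gives $\bar{\mathbf I}=\hat E\,(du^2-dv^2)=-E\,(du^2-dv^2)=-\mathbf I$. Since the map $\mathcal F:\x(u,v)\to\y(u,v)$, written in the local coordinates $(u,v)$, coincides with the identity on $\D$ (exactly as in the proof of Proposition \ref{Isom_M_theta-M-tl}), it follows that $\mathcal F^{*}\bar{\mathbf I}=\bar{\mathbf I}=-\mathbf I$; i.e.\ $\mathcal F$ reverses the sign of the induced metric, which is precisely what it means for $\mathcal F$ to be an anti-isometry between $\M$ and $\bar\M$.

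The computation is routine, so the only step requiring a little care is the bookkeeping about the parametrization. In Definition \ref{Conj_Min_Surf-tl} the conjugate surface is presented as $(\jj\D,\y(\jj s))$, the extra change $t=\jj s$ being introduced precisely so that the coefficient becomes negative again (as demanded by the standing convention $E<0$). Here, by contrast, $\bar\M$ is taken as $(\D,\y)$ in the same parameters, so that the sign reversal $\hat E=-E$ is not absorbed by a reparametrization but survives as the defining feature of $\mathcal F$; this is exactly what turns $\mathcal F$ into an anti-isometry rather than an isometry, in contrast with the map $\mathcal F_\theta$ of Proposition \ref{Isom_M_theta-M-tl}.
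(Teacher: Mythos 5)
Your proof is correct and takes essentially the same route as the paper: the paper also reduces the claim to the single relation $\hat E(t)=-E(t)$ (obtained there from $\|\jj\Psi'\|^2=-\|\Psi'\|^2$, whereas you get it directly from the Cauchy--Riemann relations $\y_u=\x_v$, $\y_v=\x_u$) and then observes that $\mathcal F$ is the identity in the coordinates $(u,v)$. Your closing remark on why the reparametrization $t=\jj s$ of Definition \ref{Conj_Min_Surf-tl} is deliberately \emph{not} performed here is an accurate account of the bookkeeping.
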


%===================================================================================================================================

\section{The curvatures $K$ and $\varkappa$ expressed through the Weingarten operators and the function $\Phi$.}\label{sect_K_kappa-Phi_R42-tl}

 Let $\M=(\D,\x)$ be a minimal time-like surface in $\RR^4_2$, parametrized by isothermal coordinates $(u,v)\in\D$ and
let $\vX_1$, $\vX_2$ be the unit tangent vectors, oriented as the coordinate vectors $\x_u$, $\x_v$.
Denote by $\n_1$, $\n_2$ an orthonormal basis of $N_p(\M)$ for any $p \in \M$ with the properties: 
$\n_1^2=-1$\,, $\n_2^2=1$ and the quadruple $(\vX_1,\vX_2,\n_1,\n_2)$ forms a right oriented orthonormal 
frame field in $\RR^4_2$. 
 
Further, $A_{\n}$ denotes the Weingarten operator in $T_p(\M)$, corresponding to the normal vector $\n$. Since
$A_{\n}\vX\cdot \vY=\sigma (\vX,\vY)\cdot \n $ and $\vH=0$, then $\trace A_{\n}=0 $ for any $\n$. Therefore 
the operators $A_{\n_1}$ and $A_{\n_2}$ have the following matrix forms:

\begin{equation}\label{A1A2_nu_lambda_rho_mu_R42-tl}
A_{\n_1}=
\left(
\begin{array}{rr}
\nu      & -\lambda\\
\lambda  & -\nu
\end{array}
\right); \qquad
A_{\n_2}=
\left(
\begin{array}{rr}
\rho & -\mu\\
\mu  & -\rho
\end{array}
\right).
\end{equation}
The second fundamental form $\sigma$ satisfies the equalities:
\begin{equation}\label{sigma_nu_lambda_rho_mu-tl}
\begin{array}{l}
\sigma (\vX_1,\vX_1)=-(\sigma (\vX_1,\vX_1)\cdot \n_1)\n_1+(\sigma (\vX_1,\vX_1)\cdot \n_2)\n_2=
\phantom{-} \nu \n_1 - \rho \n_2 \,;\\
\sigma (\vX_1,\vX_2)=-(\sigma (\vX_1,\vX_2)\cdot \n_1)\n_1+(\sigma (\vX_1,\vX_2)\cdot \n_2)\n_2=
-\lambda \n_1 + \mu \n_2 \,; \\
\sigma (\vX_2,\vX_2)=\sigma (\vX_1,\vX_1)= \nu \n_1 - \rho \n_2 \,.
\end{array}
\end{equation}

 The Gauss curvature in view of \eqref{K_sigma-tl} and \eqref{sigma_22-tl}, satisfies the equality:
\begin{equation}\label{K_min_sigma-tl}
\begin{array}{rl}
K &=-R(\vX_1,\vX_2)\vX_2\cdot \vX_1=-\sigma(\vX_1,\vX_1)\sigma(\vX_2,\vX_2)+\sigma^2(\vX_1,\vX_2)\\
  &=-\sigma^2(\vX_1,\vX_1)+\sigma^2 (\vX_1,\vX_2).
\end{array}
\end{equation}
Taking into account \eqref{sigma_nu_lambda_rho_mu-tl} and \eqref{K_min_sigma-tl}, we express $K$ via $A_{\n_1}$ 
and $A_{\n_2}$:
\begin{equation}\label{K_nu_lambda_rho_mu-tl}
K=-(-\nu^2+\rho^2)+(-\lambda^2+\mu^2)=\nu^2-\lambda^2-\rho^2+\mu^2=-\det(A_{\n_1})+\det(A_{\n_2})\,.
\end{equation}

\medskip

 In order to express $K$ through $\Phi$, we obtain from \eqref{PhiPr-tl}:
\begin{equation*}
\Phi^{\prime \bot}=\sigma (\x_u,\x_u)+\jj\sigma (\x_u,\x_v)=-E(\sigma (\vX_1,\vX_1)+\jj\sigma (\vX_1,\vX_2))\,.
\end{equation*}
The last equality implies that:
\begin{equation*}%\label{Norm_Phi_prime_bot-sigma-tl}
{\|\Phi^{\prime \bot}\|}^2
=E^2(\sigma^2(\vX_1,\vX_1)-\sigma^2 (\vX_1,\vX_2))\,.
\end{equation*}
From here and \eqref{EG-tl} we get:
\begin{equation}\label{s2+s2-tl}
\sigma^2(\vX_1,\vX_1)-\sigma^2 (\vX_1,\vX_2)=\frac{{\|\Phi^{\prime \bot}\|}^2}{E^2}=
\frac{4{\|\Phi^{\prime \bot}\|}^2}{\|\Phi\|^4}\:.
\end{equation}
Replacing in \eqref{K_min_sigma-tl}, we obtain the first formula expressing $K$ through $\Phi$:
\begin{equation}\label{K_Phi-tl}
K= -\frac{{\|\Phi^{\prime \bot}\|}^2}{E^2} = -\ds\frac{4{\|\Phi^{\prime \bot}\|}^2}{\|\Phi\|^4}\:.
\end{equation}

 This formula has the disadvantage, that the function $\Phi^{\prime \bot}$ is not holomorphic, since 
the orthogonal projection into the normal space in general does not preserve the holomorphy. That is why
we shall find another representation of $\|\Phi^{\prime \bot}\|^2$ through holomorphic functions.
The formula $\Phi^2=0$ means that $\Phi$ and $\bar\Phi$ are orthogonal with respect to the analogue 
of the Hermitian product ($a \cdot \bar b$) in $\DD^4_2$. Then it follows from \eqref{Phi_def-tl} and 
\eqref{xuxv-tl} that they form an orthogonal basis (with respect to this Hermitian product) of $T_{p,\DD}(\M)$
at any point $p\in\M$. Hence the tangential projection of $\Phi^\prime$ is given as follows:
\[
\Phi^{\prime\top}
=\ds\frac{\Phi^{\prime\top}\cdot\bar \Phi}{\|\Phi\|^2}\Phi+\ds\frac{\Phi^{\prime\top}\cdot \Phi}{\|\bar \Phi\|^2}\bar \Phi 
=\ds\frac{\Phi' \cdot \bar \Phi}{\|\Phi\|^2}\Phi + \ds\frac{\Phi' \cdot \Phi}{\|\bar \Phi\|^2}\bar \Phi\,.
\]

 Differentiating the equality $\Phi^2=0$ we get the following relation:
\begin{equation}\label{Phi.dPhi_dt-tl}
\Phi\cdot\Phi^\prime=0\,.
\end{equation}
Applying the last equality to the formula for $\Phi^{\prime\top}$, we find the projections of $\Phi'$:
\begin{equation}\label{Phipn-tl}
\Phi^{\prime\top}= \ds\frac{\Phi' \cdot \bar \Phi}{\|\Phi\|^2}\Phi\,; \quad\quad \Phi^{\prime\bot}=\Phi'-\Phi^{\prime\top}=
\Phi'-\ds\frac{\Phi' \cdot \bar \Phi}{\|\Phi\|^2}\Phi\,.
\end{equation}
By direct computations we get:
\begin{equation}\label{mPhipn2-tl}
{\|\Phi^{\prime\bot}\|}^2 = \ds\frac{\|\Phi\|^2\|\Phi'\|^2-|\bar \Phi \cdot \Phi'|^2}{\|\Phi\|^2}\ .
\end{equation}
From the above formula and \eqref{K_Phi-tl} it follows that:
\begin{equation}\label{K_Phi.Phip-tl}
K= -\ds\frac{4(\|\Phi\|^2\|\Phi'\|^2-|\bar \Phi \cdot \Phi'|^2)}{\|\Phi\|^6}\;.
\end{equation} 
Denoting the bivector product of $\Phi$ and $\Phi'$ with $\Phi\wedge\Phi'$, then:
\[\|\Phi\wedge\Phi'\|^2=\|\Phi\|^2\|\Phi'\|^2-|\bar \Phi \cdot \Phi'|^2\]  
and replacing into \eqref{K_Phi.Phip-tl}, we obtain:
\begin{equation}\label{K_Phi_bv-tl}
K= -\ds\frac{4\|\Phi\wedge\Phi'\|^2}{\|\Phi\|^6}\;.
\end{equation}
The last formula for $K$ has the advantage over \eqref{K_Phi-tl}, that $\Phi\wedge\Phi'$ is a bivector
holomorphic (with respect to $\DD$) function. 

\medskip

 Now we shall find another representation also for the tangential projection $\Phi^{\prime\top}$ of 
$\Phi^\prime$. In formula \eqref{Phipn-tl} we find the coefficient before $\Phi$ through $E$:
\[
\Phi' \cdot \bar \Phi = \frac{\partial\Phi}{\partial t} \cdot \bar \Phi = 
\frac{\partial(\Phi \cdot \bar \Phi)}{\partial t}- \Phi \cdot \frac{\partial \bar \Phi}{\partial t}=
\frac{\partial(\|\Phi\|^2)}{\partial t}\:,
\]
by using in the last equality that $\bar\Phi$ is anti-holomorphic and therefore
$\frac{\partial \bar \Phi}{\partial t}=0$. Further, applying $\|\Phi\|^2=2E$ in view of \eqref{EG-tl},
we find the coefficient before $\Phi$ in \eqref{Phipn-tl}:
\begin{equation}\label{d_ln_E_dt-tl}
\ds\frac{\Phi' \cdot \bar \Phi}{\|\Phi\|^2}=
\frac{\partial(\|\Phi\|^2)}{\partial t} \frac{1}{\|\Phi\|^2}=
\frac{\partial E}{\partial t} \frac{1}{E} = \frac{\partial\ln |E|}{\partial t} \:.
\end{equation}
Hence the tangential projection of $\Phi'$ has the form:
\begin{equation}\label{Phi_p-Tang-tl}
\Phi^{\prime\top}=
\frac{\partial\ln |E|}{\partial t}\Phi \,. %+ \Phi^{\prime\bot}
\end{equation}

 Finally we shall find another relation between $K$ and $\Phi$ by using that according to the classical
Gauss formula $K$ is expressed through the second derivatives of the coefficients of the first 
fundamental form  of $\M$. This means in isothermal coordinates that we can express $K$ through 
the second derivatives of $E$, or through the second derivatives of $\|\Phi\|^2$ in view of \eqref{EG-tl}. 
In order to obtain this equation, we use the orthogonal decomposition of $\Phi'$ and \eqref{Phi_p-Tang-tl}:
\begin{equation}\label{Phi_p-Ort_1-tl}
\Phi'= \Phi^{\prime\top} + \Phi^{\prime\bot} =
\frac{\partial\ln |E|}{\partial t}\Phi + \Phi^{\prime\bot} .
\end{equation}
Differentiating the last equality with respect to $\bar t$ and taking into account that $\Phi'$ and $\Phi$
are holomorphic, we get:
\begin{equation*}
0=\frac{\partial^2\ln |E|}{\partial\bar t\partial t}\Phi + \frac{\partial (\Phi^{\prime\bot})}{\partial\bar t}\:.
\end{equation*}
Multiplying the last equality by $\bar\Phi$ we find:
\begin{equation}\label{Delta_lnE_Phi_1-tl}
\frac{\partial^2\ln |E|}{\partial\bar t\partial t}\|\Phi\|^2 +
\frac{\partial (\Phi^{\prime\bot})}{\partial\bar t} \bar\Phi = 0\,.
\end{equation}
Applying to the first addend the equalities $\frac{\partial}{\partial\bar t} \frac{\partial}{\partial t} = \frac{1}{4}\Delta^h$ 
and $\|\Phi\|^2=2E$, we obtain:
\[
\frac{\partial^2\ln |E|}{\partial\bar t\partial t}\|\Phi\|^2=
\frac{E\Delta^h\ln |E|}{2}\:.
\]
For the second addend we have:
\[
\begin{array}{rl}
\ds\frac{\partial (\Phi^{\prime\bot})}{\partial\bar t} \bar\Phi 
            &=\ds\frac{\partial (\Phi^{\prime\bot}\cdot\bar\Phi)}{\partial\bar t} -
              \Phi^{\prime\bot} \frac{\partial \bar\Phi}{\partial\bar t}=
              0-\Phi^{\prime\bot}\overline{\Phi'}\\[1ex]
						&=-\Phi^{\prime\bot}\overline{\Phi'}{}^\bot=
						  -\Phi^{\prime\bot}\overline{\Phi^{\prime\bot}}=-\|\Phi^{\prime\bot}\|^2 .
\end{array}
\]
Now \eqref{Delta_lnE_Phi_1-tl} gets the form:
\begin{equation*}
\frac{E\Delta^h\ln |E|}{2} - \|\Phi^{\prime\bot}\|^2 = 0\,.
\end{equation*}
According to equality \eqref{K_Phi-tl} we replace $-\|\Phi^{\prime\bot}\|^2$ with $\frac{1}{4}K\|\Phi\|^4$
and get:
\begin{equation*}
2E\Delta^h\ln |E| + K\|\Phi\|^4 = 0\,.
\end{equation*}
Using again the equality $\|\Phi\|^2=2E$, we find:
\begin{equation}\label{Delta_lnE_2K-tl}
\frac{\Delta^h\ln |E|}{E} + 2K = 0\,.
\end{equation}
This is the classical fundamental Gauss equation for a minimal time-like surface in isothermal coordinates.

By using formula \eqref{Delta_lnE_2K-tl}, we obtain another way to express $K$ through $\Phi$:
\begin{equation}\label{K_Delta_lnE_lnPhi-tl}
K = \frac{\Delta^h\ln |E|}{-2E} = \frac{\Delta^h\ln (-\|\Phi\|^2)}{-\|\Phi\|^2}\:.
\end{equation}

\medskip

	Now we find the corresponding formulas expressing the curvature of the normal connection $\varkappa$ 
on $\M$ through the Weingarten operators and $\Phi$.
Taking into account equalities \eqref{kappa_A-tl} and \eqref{A1A2_nu_lambda_rho_mu_R42-tl} we have:
\begin{equation}\label{kappa_nu_lambda_rho_mu-tl}
\begin{array}{rl}
\varkappa &=\ R^N(\vX_1,\vX_2)\n_2\cdot \n_1 = 
						 [A_{\n_2},A_{\n_1}] \vX_1\cdot \vX_2=A_{\n_1}\vX_1\cdot A_{\n_2}\vX_2-A_{\n_2}\vX_1\cdot A_{\n_1}\vX_2\\
					&=\ (\nu\vX_1 + \lambda\vX_2)\cdot (-\mu\vX_1 - \rho\vX_2)-(\rho\vX_1 + \mu\vX_2)\cdot (-\lambda\vX_1 - \nu\vX_2)\\
					&=\ \nu\mu - \rho\lambda  - (\rho\lambda - \nu\mu) = 2\nu\mu - 2\rho\lambda \,.
\end{array}
\end{equation}

Denote by $\det (\va\,,\vb)$ the determinant of two normal vectors $\va$ and $\vb$, calculated with respect to 
the basis $(\n_1\,,\n_2)$ of $N_p(\M)$. With the aid of \eqref{sigma_nu_lambda_rho_mu-tl} we find:
\[
\begin{array}{rl}
\det \big(\sigma(\x_u,\x_u)\,,\sigma(\x_u,\x_v)\big) &= 
\ \det \big(\sigma(\sqrt {-E} \vX_1,\sqrt {-E} \vX_1)\,,\sigma(\sqrt {-E} \vX_1,\sqrt {-E} \vX_2)\big)\\ 
&=\ E^2 \det \big(\sigma(\vX_1,\vX_1)\,,\sigma(\vX_1,\vX_2)\big)\\
&=\ E^2 \det (\nu \n_1 - \rho \n_2\,,-\lambda \n_1 + \mu \n_2) = E^2 (\nu\mu - \rho\lambda)\,.
\end{array}
\]
 From here and \eqref{kappa_nu_lambda_rho_mu-tl} we express $\varkappa$ through $\sigma$:
\begin{equation}\label{kappa_sigma-tl}
\varkappa = \frac{2\,\det \big(\sigma(\x_u,\x_u)\,,\sigma(\x_u,\x_v)\big)}{E^2}\:.
\end{equation}

 Taking into account \eqref{PhiPr-tl}, we get:
\[
\begin{array}{rl}
\det \big(\Phi^{\prime \bot}\,,\overline{\Phi^{\prime \bot}}\,\big) &=
\ \det \big(\sigma (\x_u,\x_u)+\jj\sigma (\x_u,\x_v)\,,\sigma (\x_u,\x_u)-\jj\sigma (\x_u,\x_v)\big)\\
&=\ -2\jj \,\det \big(\sigma(\x_u,\x_u)\,,\sigma(\x_u,\x_v)\big)\,.
\end{array}
\]
Applying the last equality to \eqref{kappa_sigma-tl}, we find an expression for $\varkappa$ through
$\Phi^{\prime \bot}$:
\begin{equation}\label{kappa_Phi_pn-tl}
\varkappa = - \frac{\jj\,\det \big(\Phi^{\prime \bot}\,,\overline{\Phi^{\prime \bot}}\,\big)}{E^2} =
- \frac{4\jj\,\det \big(\Phi^{\prime \bot}\,,\overline{\Phi^{\prime \bot}}\,\big)}{\|\Phi\|^4}\:.
\end{equation}

 Similarly to the case of $K$, we shall find a formula for $\varkappa$ through $\Phi$ and $\Phi^{\prime}$
without using the orthogonal projection into the normal space. For that purpose, let 
$\det (\va\,,\vb\,,\vc\,,\vd)$ denote the determinant of order four of the vectors $\va$, $\vb$, $\vc$ and $\vd$, 
calculated with respect to the standard basis in $\DD^4_2$. Then we have:
\[
\det \big(\vX_1\,,\vX_2\,,\Phi^{\prime \bot}\,,\overline{\Phi^{\prime \bot}}\,\big)=
\det \big(\Phi^{\prime \bot}\,,\overline{\Phi^{\prime \bot}}\,\big) \det \big(\vX_1\,,\vX_2\,,\n_1\,,\n_2\big)=
\det \big(\Phi^{\prime \bot}\,,\overline{\Phi^{\prime \bot}}\,\big)\,.
\]
From here it follows that:
\[
\det \big(\Phi^{\prime \bot}\,,\overline{\Phi^{\prime \bot}}\,\big)=
\det \big(\vX_1\,,\vX_2\,,\Phi^{\prime \bot}\,,\overline{\Phi^{\prime \bot}}\,\big)=
\det \big(\vX_1\,,\vX_2\,,\Phi^\prime-\Phi^{\prime \top}\,,\overline{\Phi^\prime}-{\overline{\Phi^\prime}}^\top\,\big)\,.
\]
The vectors $\Phi^{\prime \top}$ and ${\overline{\Phi^\prime}}^\top$ tangent to ${\M}$ are linear combinations 
of $\vX_1$ and $\vX_2$. Then we have:
\begin{equation*}%\label{det_Phi_pn_X12_Phi_p-tl}
\det \big(\Phi^{\prime \bot}\,,\overline{\Phi^{\prime \bot}}\,\big)=
\det \big(\vX_1\,,\vX_2\,,\Phi^\prime\,,\overline{\Phi^\prime}\,\big)\,.
\end{equation*}
On the other hand we have:
\[
\begin{array}{rl}
\det \big(\Phi\,,\bar\Phi\,,\Phi^\prime\,,\overline{\Phi^\prime}\,\big) &=
\ \det \big(\x_u+\jj\x_v\,,\x_u-\jj\x_v\,,\Phi^\prime\,,\overline{\Phi^\prime}\,\big)\\
&=\ -2\jj\,\det \big(\x_u\,,\x_v\,,\Phi^\prime\,,\overline{\Phi^\prime}\,\big)=
2\jj E \,\det \big(\vX_1\,,\vX_2\,,\Phi^\prime\,,\overline{\Phi^\prime}\,\big)\,.
\end{array}
\]
The last two formulas imply that:
\[
\det \big(\Phi^{\prime \bot}\,,\overline{\Phi^{\prime \bot}}\,\big)=
\frac{\jj\, \det \big(\Phi\,,\bar\Phi\,,\Phi^\prime\,,\overline{\Phi^\prime}\,\big)}{2E}\:.
\]
Replacing in \eqref{kappa_Phi_pn-tl}, we obtain:
\begin{equation}\label{kappa_Phi-tl}
\varkappa = - \frac{\det \big(\Phi\,,\bar\Phi\,,\Phi^\prime\,,\overline{\Phi^\prime}\,\big)}{2E^3} =
- \frac{4\,\det \big(\Phi\,,\bar\Phi\,,\Phi^\prime\,,\overline{\Phi^\prime}\,\big)}{\|\Phi\|^6}\:.
\end{equation}

\smallskip

 For any minimal time-like surface $\M=(\D,\x)$ in $\RR^4_2$, parametrized by isothermal coordinates,
the Gauss curvature $K$ and the normal curvature $\varkappa$ are given by the following formulas:

- through the Weingarten operators:
\begin{equation}\label{K_kappa_nu_lambda_rho_mu-tl}
K= \nu^2-\lambda^2-\rho^2+\mu^2 \,; \quad\quad
\varkappa = 2\nu\mu - 2\rho\lambda \,.
\end{equation}

\medskip

- through the complex function $\Phi$: 
\begin{equation}\label{K_kappa_Phi_R42-tl}
K= \ds\frac{-4{\|\Phi^{\prime \bot}\|}^2}{\|\Phi\|^4}
=\ds\frac{-4\|\Phi\wedge\Phi'\|^2}{\|\Phi\|^6}\,; \quad\quad
\varkappa = \frac{-4\,\det \big(\Phi\,,\bar\Phi\,,\Phi^\prime\,,\overline{\Phi^\prime}\,\big)}{\|\Phi\|^6}\:.
\end{equation}

\medskip

 Along with the orthonormal basis $(\n_1\,,\n_2)$ of $N_p(\M)$, we have used up to now, it is useful to 
use the \emph{isotropic basis} $(\m_1\,,\m_2)$, given by:
\begin{equation}\label{m12_n12_R42-tl}
\m_1=\frac{\n_1 + \n_2}{2}\:; \qquad \m_2=\frac{-\n_1 + \n_2}{2}\:.
\end{equation}
These vectors $\m_1$ and $\m_2$ satisfy the equalities:
\begin{equation}\label{m12_psd_ort_R42-tl}
\m_1^2=\m_2^2=0\,; \qquad \m_1\m_2=\frac{1}{2}\:; \qquad \det(\m_1\,,\m_2)=\frac{1}{2}\:.
\end{equation}
From here it follows that the pair $(\m_1\,,\m_2)$ has the same orientation, as the pair $(\n_1\,,\n_2)$.
 
 Conversely, if $(\m_1\,,\m_2)$ is an isotropic basis $N_p(\M)$, such that $\m_1\m_2=\frac{1}{2}$ and the
quadruple $(\vX_1,\vX_2,\m_1,\m_2)$ is a right orthonormal frame in $\RR^4_2$, then the vectors $\n_1$ and $\n_2$, 
given by:
\begin{equation}\label{n12_m12_R42-tl}
\n_1=\m_1 - \m_2\,; \qquad \n_2=\m_1 + \m_2\,,
\end{equation}
form an orthonormal basis of $N_p(\M)$, such that $\n_1^2=-1$\,.
Moreover, the quadruple $(\vX_1,\vX_2,\n_1,\n_2)$ is a right orthonormal frame in $\RR^4_2$.

 Suppose that $(\hat\m_1\,,\hat\m_2)$ is another isotropic basis, satisfying the conditions \eqref{m12_psd_ort_R42-tl}.
Then $\hat\m_1$ is collinear with $\m_1$ or $\m_2$. If $\hat\m_1 = k \m_2$, then $\hat\m_2 = l \m_1$.
It follows from $\det(\hat\m_1\,,\hat\m_2) = -kl\det(\m_1\,,\m_2)$ that $kl<0$\,.
But from $\hat\m_1 \hat\m_2 = kl\,\m_1\m_2$ it follows that $kl>0$\,, which is a contradiction.
Consequently, $\hat\m_1 = k \m_1$ and $\hat\m_2 = l \m_2$.
The equalities $\hat\m_1 \hat\m_2 = kl\,\m_1\m_2$ and $\hat\m_1 \hat\m_2 = \frac{1}{2} = \m_1\m_2$ imply that: $kl=1$\,.
Hence, if $(\m_1\,,\m_2)$ and $(\hat\m_1\,,\hat\m_2)$ are two isotropic bases, satisfying \eqref{m12_psd_ort_R42-tl},
then they are related by the following equalities: 
\begin{equation}\label{hat_m12_R42-tl}
\hat\m_1 = k\m_1\,, \qquad \hat\m_2 = \frac{1}{k}\m_2\,; \qquad\quad k\in\RR, \  k\neq 0\,.
\end{equation}

\medskip

  Suppose that $\Phi^{\prime \bot}$ has the following representation with respect to the basis $(\m_1\,,\m_2)$:
\begin{equation}\label{Phi_pn_m12_R42-tl}
\Phi^{\prime \bot} = c_1\m_1 + c_2\m_2\,; \qquad c_1,c_2 \in \DD \,.
\end{equation}
Then we get from \eqref{K_Phi-tl} the following expression for $K$:
\[
\begin{array}{rl}
K &=\ -\ds \frac{{\|\Phi^{\prime \bot}\|}^2}{E^2} = -\frac{(c_1\m_1 + c_2\m_2)(\bar c_1\m_1 + \bar c_2\m_2)}{E^2} =
-\frac{(c_1\bar c_2 + \bar c_1 c_2) \m_1\m_2}{E^2}\\[2ex]
  &=\ -\ds\frac{c_1\bar c_2 + \overline{c_1\bar c_2}}{2E^2} = -\frac{\Re(c_1\bar c_2)}{E^2}\:.
\end{array}
\]
Analogously, we get from \eqref{kappa_Phi_pn-tl} the following expression for $\varkappa$:
\[
\begin{array}{rl}
\varkappa &=\ -\ds \frac{\jj\,\det \big(\Phi^{\prime \bot}\,,\overline{\Phi^{\prime \bot}}\,\big)}{E^2} = 
-\frac{\jj\,\det (c_1\m_1 + c_2\m_2 \,, \bar c_1\m_1 + \bar c_2\m_2)}{E^2}\\[2ex]
&=\ -\ds\frac{\jj (c_1\bar c_2 - \bar c_1 c_2) \det(\m_1\,,\m_2)}{E^2} =
    -\frac{\jj (c_1\bar c_2 - \overline{c_1\bar c_2}\,)}{2E^2} = -\frac{\Im(c_1\bar c_2)}{E^2}\:.
\end{array}
\]
Consequently the curvatures $K$ and $\varkappa$ are given by $c_1$ and $c_2$ as follows:
\begin{equation}\label{K_kappa_c12_R42-tl}
K= -\frac{\Re(c_1\bar c_2)}{E^2} \:; \quad\quad
\varkappa = -\frac{\Im(c_1\bar c_2)}{E^2}\:.
\end{equation}

\medskip

 At the end of this section we establish how the curvatures $K$ and $\varkappa$ are transformed under a change 
of the coordinates and under the basic geometric transformations of the minimal time-like surface $\M$ in $\RR^4_2$.
As a basic analytic tool we shall use formulas \eqref{K_kappa_Phi_R42-tl}.
First, let $t=t(s)$ give a change of the coordinates. Since $K$ and $\varkappa$ are scalar invariants, then:
\begin{equation}\label{K_kappa_s_R42-tl}
K(s)=K(t(s))\,; \qquad \varkappa (s) = \varkappa (t(s))\,.
\end{equation}

\smallskip

 Next let $\M=(\D,\x)$ and $\hat\M=(\D,\hat\x)$ be two minimal time-like surfaces in $\RR^4_2$ and suppose that, 
$\hat\M$ is obtained from $\M$ through a motion (possibly improper) in $\RR^4_2$ by the formula:
\begin{equation*}%\label{hat_M-M-mov_R42-tl_2}
\hat\x(t)=A\x(t)+\vb\,; \qquad A \in \mathbf{O}(2,2,\RR), \ \vb \in \RR^4_2 \,.
\end{equation*}
Differentiating the above equality, we find that the corresponding functions $\Phi$ and $\hat\Phi$ satisfy the equalities:
\begin{equation}\label{hat_Phi_pn-Phi_pn-mov_R42-tl}
 \hat\Phi(t)=A\Phi(t)\,; \qquad \hat\Phi'(t)=A\Phi'(t)\,; \qquad \hat\Phi^{\prime \bot}(t) = A\Phi^{\prime \bot}(t)\,.
\end{equation}
In order to obtain the last equality, we use that the subspaces $T_p(\M)$ and $N_p(\M)$ are invariant under an 
arbitrary motion and therefore the orthogonal projections of any vector into these subspaces are also invariant.
Since any motion is an isometry between $\M$ and $\hat\M$, then $\hat K = K$. The last equality can also be obtained
by a direct computation using formulas \eqref{K_kappa_Phi_R42-tl} and \eqref{hat_Phi_pn-Phi_pn-mov_R42-tl}. 
The situation with the normal curvature $\varkappa$ is different. 
For the determinant in formula \eqref{K_kappa_Phi_R42-tl} we have:
\[
\det \big(A\Phi,A\bar\Phi,A\Phi^\prime,A\overline{\Phi^\prime}\,\big)=
\det A \cdot \det \big(\Phi,\bar\Phi,\Phi^\prime,\overline{\Phi^\prime}\,\big)\,.
\]
From here it follows that under a proper motion ($\det A = 1$) $\varkappa$ is preserved, 
while under an improper motion ($\det A = -1$) $\varkappa$ changes the sign.
Thus we have:
\begin{equation}\label{hat_K_kappa-K_kappa-mov_R42-tl}
\hat K(t)=K(t)\,; \qquad \hat\varkappa(t) = \varepsilon\varkappa(t) \,;
\qquad\quad  \varepsilon = \det A\,.
\end{equation}

\smallskip

 Let us consider the case, when $\hat\M$ is obtained from $\M$ through an anti-isometry in $\RR^4_2$, which 
according to \eqref{hat_M-M-antimov-tl}, is given by a formula of the type: 
\begin{equation*}%\label{hat_M-M-antimov-tl_2}
\hat\x(s)=A\x(\jj s)+\vb\,; \qquad A \in \mathbf{AO}(2,2,\RR), \ \vb \in \RR^4_2 \,.
\end{equation*}
Then, analogously to \eqref{hat_Phi_pn-Phi_pn-mov_R42-tl}, we get from \eqref{hat_Phi-Phi-antimov-tl}:
\begin{equation}\label{hat_Phi_pn-Phi_pn-antimov_R42-tl}
 \hat\Phi(s)=\jj A\Phi(\jj s)\,; \qquad \hat\Phi'(s)=A\Phi'(\jj s)\,; \qquad \hat\Phi^{\prime \bot}(s) = A\Phi^{\prime \bot}(\jj s)\,.
\end{equation}
Applying the last formula to \eqref{K_kappa_Phi_R42-tl}, we obtain $\hat K(s) = -K(\jj s)$.
For the determinant in the formula \eqref{K_kappa_Phi_R42-tl} we have:
\[
\det \big(\jj A\Phi(\jj s),\bar\jj A\bar\Phi(\jj s),A\Phi^\prime(\jj s),A\overline{\Phi^\prime}(\jj s)\big)=
-\det A \cdot \det \big(\Phi(\jj s),\bar\Phi(\jj s),\Phi^\prime(\jj s),\overline{\Phi^\prime}(\jj s)\big)\,.
\]
From here it follows that under an anti-isometry in $\RR^4_2$, the curvatures $\hat K$ and $\hat\varkappa$ change as follows:
\begin{equation}\label{hat_K_kappa-K_kappa-antimov_R42-tl}
\hat K(s)=-K(\jj s)\,; \qquad \hat\varkappa(s) = -\varepsilon\varkappa(\jj s) \,;
\qquad\quad  \varepsilon = \det A\,.
\end{equation}

\smallskip

Next we consider the case of a homothety in $\RR^4_2$. If the minimal time-like surfaces $\M=(\D,\x)$ and
$\hat\M=(\D,\hat\x)$ are related by an equality of the type $\hat\x(t)=k\x(t)$, then it follows that  
$\hat\Phi=k\Phi$;\ \ $\hat\Phi'=k\Phi'$\ \ and\ \ $\hat\Phi^{\prime \bot} = k\Phi^{\prime \bot}$.
Applying the last equalities to \eqref{K_kappa_Phi_R42-tl}, we find that the curvatures $K$ and $\varkappa$
are transformed in the following way:
\begin{equation}\label{hat_K_kappa-K_kappa-homotet_R42}
\hat K(t)=\frac{1}{k^2}K(t)\,; \qquad \hat\varkappa(t) = \frac{1}{k^2}\varkappa(t) \,.
\end{equation}

\smallskip

 Now let $\bar\M$ be the conjugate surface of the minimal time-like surface $\M$, according to Definition \ref{Conj_Min_Surf-tl}\,.
Then, for the corresponding functions $\hat\Phi$ and $\Phi$ we have from \eqref{Phi_conj-tl}:
\begin{equation}\label{hat_Phi_pn-Phi_pn-conj_R42-tl}
 \hat\Phi(s)=\Phi(\jj s)\,; \qquad \hat\Phi'(s)=\jj\Phi'(\jj s)\,; \qquad \hat\Phi^{\prime \bot}(s) = \jj\Phi^{\prime \bot}(\jj s)\,.
\end{equation}
Applying the last formula to \eqref{K_kappa_Phi_R42-tl}, we get $\hat K(s) = -K(\jj s)$.
For the determinant in the formula \eqref{K_kappa_Phi_R42-tl} we have:
\[
\det \big(\Phi(\jj s),\bar\Phi(\jj s),\jj\,\Phi^\prime(\jj s),\bar\jj\,\overline{\Phi^\prime}(\jj s)\big)=
- \det \big(\Phi(\jj s),\bar\Phi(\jj s),\Phi^\prime(\jj s),\overline{\Phi^\prime}(\jj s)\big)\,.
\]
Hence, the curvatures $\hat K$ and $\hat\varkappa$ of the conjugate surface satisfy the equalities:
\begin{equation}\label{hat_K_kappa-K_kappa-conj_R42-tl}
\hat K(s)=-K(\jj s)\,; \qquad \hat\varkappa(s) = -\varkappa(\jj s) \,.
\end{equation}

\smallskip

 Finally we consider the one-parameter family $\M_\theta$, given by Definition~\ref{1-param_family_assoc_surf-tl}\,, of  
minimal time-like surfaces, associated to a given minimal time-like surface $\M$.
From  \eqref{Phi_1-param_family-tl} we have:
$\Phi_\theta(t) = \e^{\jj\theta}\Phi(t)$; \ \ $\Phi'_\theta(t) = \e^{\jj\theta}\Phi'(t)$\ \ and\ \ 
$\Phi'^\bot_\theta(t) = \e^{\jj\theta}\Phi'^\bot(t)$.
According to Proposition~\ref{Isom_M_theta-M-tl}\,, the map $\mathcal{F}_\theta: \x(t)\rightarrow \x_\theta(t)$
is an isometry. Therefore the Gauss curvature is invariant: $K_\theta(t)=K(t)$. 
The last equality can be obtained by direct calculations using formulas \eqref{K_kappa_Phi_R42-tl} and
the obtained equalities for $\Phi_\theta$ and $\Phi'^\bot_\theta$. 

Applying the formulas for $\Phi_\theta$ and $\Phi'_\theta$ to \eqref{K_kappa_Phi_R42-tl}, we find that the curvature
$\varkappa$ is also invariant under $\mathcal{F}_\theta$.
Thus, for the one-parameter family $\M_\theta$ of associated minimal time-like surfaces we have:
\begin{equation}\label{K_kappa-1-param_family_R42-tl}
K_\theta(t)=K(t)\,; \qquad \varkappa_\theta(t) = \varkappa(t) \,.
\end{equation}

%===================================================================================================================================

\section{Canonical coordinates on a minimal time\,-\,like surface in~$\RR^4_2$.}\label{sect_can-def_R42-tl}

 In the previous considerations of the minimal time-like surfaces in $\RR^4_2$ we used arbitrary isothermal coordinates. 
It is known that in the case of $\RR^3_1$, the minimal time-like surfaces admit special geometric isothermal coordinates.
For example in \cite {G} it is shown that any minimal time-like surface in $\RR^3_1$ with negative Gauss curvature $K<0$\, 
admits local coordinates which are at the same time \emph{isothermal} and \emph{principal}. 
Using the standard denotations for the coefficients of the first and the second fundamental form $E, F, G$; $L, M, N$, 
this means: $E=-G$, $F=0$ and $M=0$\,. Furthermore, these coordinates can be normalized in such a way that $L=N=1$\,. 
These properties of the local coordinates determine them uniquely up to orientation of the coordinate lines.
Further, we call such kind of coordinates \emph{canonical coordinates}. In the case of a positive Gauss curvature $K>0$,
it is shown that any minimal time-like surface admits local coordinates, which are at the same time \emph{isothermal} and 
\emph{asymptotic}. These coordinates are characterized by the conditions $L=N=0$ and $M=\pm 1$\,.

 In \cite{G-M-1} it is proved that any minimal time-like surface of a general class in $\RR^4_1$ carries locally
special isothermal coordinates $(u,v)$, which in our denotations are characterized by the conditions:
\begin{equation}\label{Can_GM1-tl}
\begin{array}{ll}
-\x_u^2=\x_v^2>0\,, &
\sigma^2(\x_u,\x_u)+\sigma^2(\x_u,\x_v)=1\,,\\
\x_u \cdot \x_v = 0\,, \qquad &
\sigma(\x_u,\x_u) \cdot \sigma(\x_u,\x_v)=0\,.
\end{array}
\end{equation}
These properties of the local coordinates determine them uniquely up to orientation of the coordinate lines and
that is why further we call them \emph{canonical coordinates} for the minimal time-like surfaces in $\RR^4_1$.

 In the case of $\RR^4_2$, in \cite{S-2} it is shown that under the condition $K^2-\varkappa^2>0$\,,
there exist local coordinates, which also satisfy the properties \eqref{Can_GM1-tl}. In the same work, it is 
noted that under the condition $K^2-\varkappa^2<0$\,, there do not exist isothermal coordinates, satisfying
\eqref{Can_GM1-tl}.

\smallskip

 Further, we show how the conditions for the coordinates to be canonical, can be expressed in terms of the function
$\Phi$, given by \eqref{Phi_def-tl}.
For that purpose, we consider formulas \eqref{PhiPr-tl}. Taking the scalar square of the second equality, we get:
\begin{equation}\label{PhiPr_bot^2-tl}
{\Phi^{\prime \bot}}^2=\sigma^2(\x_u,\x_u)+\sigma^2(\x_u,\x_v)+2\jj\,\sigma(\x_u,\x_u)\cdot\sigma(\x_u,\x_v)\,.
\end{equation}
If $\M$ is a minimal time-like surface in $\RR^3_1$ parametrized by principal canonical coordinates, then the
conditions $M=0$ and $L=1$ mean that $\sigma(\x_u,\x_v)=0$ and $\sigma^2(\x_u,\x_u)=1$\,. Now it follows from
\eqref{PhiPr_bot^2-tl} that ${\Phi^{\prime \bot}}^2=1$\,.\, If $\M$ is parametrized by asymptotic canonical 
coordinates, then the conditions $M=\pm 1$ and $L=0$ mean that $\sigma^2(\x_u,\x_v)=1$ and $\sigma(\x_u,\x_u)=0$\,,
which gives again ${\Phi^{\prime \bot}}^2=1$\,.

If $\M$ is a minimal time-like surface in $\RR^4_1$, parametrized by canonical coordinates, then the first two conditions
of \eqref{Can_GM1-tl} give that the coordinates are isothermal, while the third and the fourth condition give again 
${\Phi^{\prime \bot}}^2=1$\,, according to \eqref{PhiPr_bot^2-tl}.

\smallskip

 The above observations show that the condition for the coordinates to be canonical is a condition for ${\Phi'^\bot}^2$. 
That is why we find the formulas which ${\Phi'^\bot}^2$ satisfy under a motion of the surface in $\RR^4_2$ and under
a change of the isothermal coordinates.

Suppose that the surface $\hat\M$ is obtained from $\M$ through a motion $A$ (possibly improper) in $\RR^4_2$.
Taking into account \eqref{hat_Phi_pn-Phi_pn-mov_R42-tl}, we take a square of the last formula and get:
\begin{equation}\label{hat_Phi_pn2-Phi_pn2-mov_R42-tl}
\left.\hat\Phi'^\bot\right.^2 \!\! (t)=\left.\Phi'^\bot\right.^2 \!\! (t)\,. 
\end{equation}
This means that ${\Phi'^\bot}^2$ is invariant under a motion in $\RR^4_2$.

 Next we consider a change of the isothermal coordinates. Let $(u,v)$ be arbitrary isothermal coordinates on
the minimal time-like surface $\M$. Denoting $t=u+\jj v$ we make the change $t=t(s)$, where $s\in\DD$ is a
new variable, giving new isothermal coordinates. Denote by $\tilde\Phi(s)$ the function, corresponding to the new
coordinates $s$. The change of the isothermal coordinates generates either a holomorphic or an anti-holomorphic map.

First we consider the holomorphic case. Then, applying formula \eqref{Phi_s-hol-tl} we have $\tilde\Phi=\Phi t'$, 
from where $\tilde\Phi'_s=\Phi'_t t'^{\,2}+\Phi t'\,\!'$. Since the vector $\Phi$ is tangent to $\M$, 
then $\Phi^\bot=0$ and consequently we obtain:
\begin{equation}\label{tildPhiPr2-tl}
\tilde\Phi_s'^\bot = \Phi_t'^\bot \,t^{\prime \, 2}\,; \qquad 
\left.\tilde\Phi_s^{\prime\bot}\right.^2 = {\Phi_t^{\prime \bot}}^2 t'^{\,4}\,.
\end{equation}

The case of an arbitrary anti-holomorphic function is reduced to a holomorphic function and the special case
$t=\bar s$. That is why we consider the last equality. According to
\eqref{Phi_s-t_bs-tl} we have: $\tilde\Phi(s)=\bar\Phi(\bar s)$, from where 
$\tilde\Phi_s'(s)=\overline{\Phi_t'}(\bar s)$.
Therefore: 
\begin{equation}\label{tildPhiPr2-t_bs-tl}
\tilde\Phi_s'^\bot(s) = \overline{\Phi_t'^\bot(\bar s)}\:; \qquad
\left.\tilde\Phi_s^{\prime\bot}\right.^2\!(s) = \overline{{\Phi_t^{\prime \bot}}^2 (\bar s)}\:.
\end{equation}

	Let ${\Phi_t^{\prime\bot}}^2\in\DD_0$, where $\DD_0$ is the set defined by \eqref{D0}. Since the set $\DD_0$
is closed with respect to both: multiplication with a number from $\DD$ or complex conjugation in $\DD$, it 
follows easily from \eqref{tildPhiPr2-tl} and \eqref{tildPhiPr2-t_bs-tl} that 
$\left.\tilde\Phi_s^{\prime\bot}\right.^2\in\DD_0$ as well. Therefore the condition ${\Phi^{\prime\bot}}^2=1$ 
can not be fulfilled under any isothermal coordinates. This means that the points in which ${\Phi^{\prime\bot}}^2\in\DD_0$, 
have to be conidered separately.

We give the following
\begin{dfn}\label{DegP-def-tl}
If $\M$ is a minimal time-like surface in $\RR^4_2$, pametrized by isothermal coordinates $(u,v)$, 
the point $p$ in $\M$ is said to be a \textbf{degenerate point}, if the function $\Phi$, given by \eqref{Phi_def-tl}, 
satisfies the condition ${\Phi^{\prime \bot}}^2(p)\in\DD_0$.
\end{dfn}

 The above definition is analytic, but it follows easily that the property of a point to be degenerate is geometric. 
We have the following statement:
\begin{theorem}\label{DegP_Change_Move-tl}
 Let $\M$ be a minimal time-like surface in $\RR^4_2$. Then the property of a point to be degenerate does not 
depend on the choice of the isothermal coordinates and it is invariant under a motion of $\M$ in $\RR^4_2$.
\end{theorem}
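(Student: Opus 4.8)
The plan is to establish the two invariance claims separately, since the definition of a degenerate point is given analytically through the condition ${\Phi^{\prime \bot}}^2(p)\in\DD_0$, and I must show this condition is intrinsic. Both parts rest on the transformation formulas for ${\Phi'^\bot}^2$ already derived in the excerpt, together with the key algebraic fact recorded just before the definition: the set $\DD_0$ is closed under multiplication by any element of $\DD$ and under complex conjugation in $\DD$.

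First I would treat the independence from the choice of isothermal coordinates. Any change of isothermal coordinates generates either a holomorphic or an anti-holomorphic map in $\DD$, and an arbitrary anti-holomorphic change factors through a holomorphic change composed with the special change $t=\bar s$. Hence it suffices to inspect these two cases. In the holomorphic case, formula \eqref{tildPhiPr2-tl} gives
\[
\left.\tilde\Phi_s^{\prime\bot}\right.^2 = {\Phi_t^{\prime \bot}}^2\, t'^{\,4}\,,
\]
so ${\Phi_t^{\prime\bot}}^2\in\DD_0$ forces $\left.\tilde\Phi_s^{\prime\bot}\right.^2\in\DD_0$ because $\DD_0$ is closed under multiplication by the factor $t'^{\,4}\in\DD$. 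In the special anti-holomorphic case $t=\bar s$, formula \eqref{tildPhiPr2-t_bs-tl} gives $\left.\tilde\Phi_s^{\prime\bot}\right.^2(s)=\overline{{\Phi_t^{\prime \bot}}^2(\bar s)}$, and closedness of $\DD_0$ under conjugation again preserves membership. Since the property is symmetric (the inverse change is of the same type), a point is degenerate in one isothermal chart if and only if it is degenerate in any other, which proves the first assertion.

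Next I would treat invariance under a motion of $\M$ in $\RR^4_2$. Here the relevant formula is \eqref{hat_Phi_pn2-Phi_pn2-mov_R42-tl}, namely $\left.\hat\Phi'^\bot\right.^2(t)=\left.\Phi'^\bot\right.^2(t)$, which states that ${\Phi'^\bot}^2$ is literally unchanged (not merely up to a factor) under a motion, proper or improper, applied to the surface. Consequently the condition ${\Phi^{\prime \bot}}^2(p)\in\DD_0$ transfers verbatim to the image point, so degeneracy is preserved under every motion. I would then remark that one should also confirm invariance under an anti-isometry, using \eqref{hat_Phi_pn-Phi_pn-antimov_R42-tl} to compute $\left.\hat\Phi_s^{\prime\bot}\right.^2$ and again invoking closedness of $\DD_0$, so that the notion is genuinely geometric in the full sense relevant to the paper.

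I expect no serious obstacle: the whole argument is a bookkeeping exercise assembling the transformation laws \eqref{tildPhiPr2-tl}, \eqref{tildPhiPr2-t_bs-tl} and \eqref{hat_Phi_pn2-Phi_pn2-mov_R42-tl} with the two stability properties of $\DD_0$. The only point requiring care is to make explicit that an arbitrary anti-holomorphic coordinate change reduces to the holomorphic case plus $t=\bar s$, so that the two displayed transformation formulas already cover every admissible change of isothermal coordinates; everything else is an immediate consequence of the closure of $\DD_0$ under the algebra operations involved.
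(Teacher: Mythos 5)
Your proof is correct and follows essentially the same route as the paper: the coordinate-independence is read off from \eqref{tildPhiPr2-tl} and \eqref{tildPhiPr2-t_bs-tl} together with the closure of $\DD_0$ under multiplication and conjugation, and the motion-invariance from \eqref{hat_Phi_pn2-Phi_pn2-mov_R42-tl}. The extra remark on anti-isometries is not needed for the statement itself, but it matches what the paper establishes later from \eqref{hat_Phi_pn2-Phi_pn2-antimov_R42-tl}.
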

\begin{proof}
 The independance of the property from the choice of the isothermal coordinates, as we noted earlier, is a direct 
consequence of \eqref{tildPhiPr2-tl} and \eqref{tildPhiPr2-t_bs-tl}. The invariance of the property under a motion in 
$\RR^4_2$ follows from \eqref{hat_Phi_pn2-Phi_pn2-mov_R42-tl}. 
\end{proof}

 We have already seen that around a degenerate point one can not introduce canonical coordinates and
that is why we shall consider minimal time-like surfaces free of degenerate points. 
We give the following
\begin{dfn}\label{Min_Surf_Gen_Typ-def-tl}
A minimal time-like surface $\M$ in $\RR^4_2$ is said to be of \textbf{general type}, if it is free of degenerate points.
\end{dfn}

 If $\M$ is of general type, then we have ${\Phi'^\bot}^2\notin\DD_0$ and as we noted considering the exponential 
form \eqref{t-exp} of the double numbers, there exists a unique number $\varepsilon=\pm 1;\,\pm\jj$\,, such that
$\varepsilon{\Phi'^\bot}^2\in\DD_+$. The value of $\varepsilon$ depends on the fact in which quadrant with respect 
to the null basis in $\DD$, given by \eqref{qq-def}, is the value of ${\Phi'^\bot}^2$.
From the above it follows that the minimal time-like surfaces of general type in $\RR^4_2$ are divided into 
different types depending on the value of $\varepsilon$, i.e. depending on the quadrant, where the value of   
${\Phi'^\bot}^2$ lies. 

We give the following 
\begin{dfn}\label{Min_Surf_kind123-def-tl}
 Let $\M$ be a minimal time-like surface of general type in $\RR^4_2$.
The surface $\M$ is said to be:

{
\renewcommand{\baselinestretch}{1.40}
\selectfont
\begin{itemize}
	\item of \textbf{the first type}, if ${\Phi^{\prime\bot}}^2 \in \DD_+$\,, 
	i.e. $\big|{\Phi^{\prime\bot}}^2\big|^2 > 0$ and $\Re \big({\Phi^{\prime\bot}}^2\big) > 0$\,;
	\item of \textbf{the second type}, if $-{\Phi^{\prime\bot}}^2 \in \DD_+$\,, 
	i.e. $\big|{\Phi^{\prime\bot}}^2\big|^2 > 0$ and $\Re \big({\Phi^{\prime\bot}}^2\big) < 0$\,;
	\item of \textbf{the third type}, if $\pm\jj {\Phi^{\prime\bot}}^2 \in \DD_+$\,, 
	i.e. $\big|{\Phi^{\prime\bot}}^2\big|^2 < 0$\,.
\end{itemize}

}
\end{dfn}

 The above definition is analytic, but the next theorem shows that the type of the minimal time-like surface
is a geometric property.
\begin{theorem}\label{kind123_Change_Move-tl}
 Let $\M$ be a minimal time-like surface of general type in $\RR^4_2$. 
The type of $\M$does not depend on the isothermal coordinates and is invariant under a motion of $\M$ in $\RR^4_2$.
\end{theorem}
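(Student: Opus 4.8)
The plan is to recognize that the type of $\M$ is encoded entirely in the position of the double number ${\Phi'^\bot}^2$ relative to the null basis $(\qq,\bar\qq)$ from \eqref{qq-def}, and then to track this position through the transformation laws \eqref{hat_Phi_pn2-Phi_pn2-mov_R42-tl}, \eqref{tildPhiPr2-tl} and \eqref{tildPhiPr2-t_bs-tl} already established. Writing ${\Phi'^\bot}^2 = z_1\qq + z_2\bar\qq$ with $z_1,z_2\in\RR$, the null-basis descriptions of $\DD_+$ and $\DD_0$ show that, for a surface of general type (where $z_1z_2\neq 0$), the three cases of Definition \ref{Min_Surf_kind123-def-tl} correspond exactly to the three sign patterns of $(z_1,z_2)$: the first type to $z_1>0,\ z_2>0$; the second type to $z_1<0,\ z_2<0$; and, since multiplication by $\jj$ reverses the sign of the $\qq$-component only, the third type to $z_1$ and $z_2$ of opposite sign. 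Thus it suffices to prove that neither a motion nor an admissible change of isothermal coordinates alters this sign pattern.

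Invariance under a motion is immediate: formula \eqref{hat_Phi_pn2-Phi_pn2-mov_R42-tl} asserts $\left.\hat\Phi'^\bot\right.^2 = \left.\Phi'^\bot\right.^2$, so the value itself, and hence its sign pattern, is unchanged. For a change of isothermal coordinates I would first invoke Proposition \ref{Orient-holo-tl}, by which any such change $t=t(s)$ is holomorphic or anti-holomorphic. In the holomorphic case, \eqref{tildPhiPr2-tl} gives $\left.\tilde\Phi_s^{\prime\bot}\right.^2 = {\Phi_t^{\prime\bot}}^2\, t'^{\,4}$, so the task reduces to showing that multiplication by $t'^{\,4}$ preserves the sign pattern. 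Writing $t' = p\qq + q\bar\qq$, the amplitude is $|t'|^2 = pq$, and admissibility forces $|t'|^2>0$ (the condition extracted from \eqref{E_s-hol-tl}); hence $p,q$ are nonzero and of the same sign. By the component-wise multiplication \eqref{sum-prod-qq}, $t'^{\,4} = p^4\qq + q^4\bar\qq$ has both components strictly positive, i.e. $t'^{\,4}\in\DD_+$. Multiplying ${\Phi_t^{\prime\bot}}^2 = z_1\qq+z_2\bar\qq$ by such a number rescales each component by a positive factor, leaving the sign of each component, and therefore the type, unchanged.

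Finally, every anti-holomorphic change is the composition of a holomorphic change with the special change $t=\bar s$, so it remains to treat the latter. Here \eqref{tildPhiPr2-t_bs-tl} gives $\left.\tilde\Phi_s^{\prime\bot}\right.^2(s) = \overline{{\Phi_t^{\prime\bot}}^2(\bar s)}$, and since $z_1,z_2$ are real, conjugation in $\DD$ merely interchanges the two null-basis components, sending $z_1\qq+z_2\bar\qq$ to $z_2\qq+z_1\bar\qq$. Swapping the entries of $(z_1,z_2)$ leaves each of the three sign patterns (both positive, both negative, opposite signs) invariant, so the type is again preserved; composing with the holomorphic case of the previous paragraph then settles all anti-holomorphic changes. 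The only genuinely substantive step is the identification $t'^{\,4}\in\DD_+$: once the type is reinterpreted as a sign pattern of the null-basis components, invariance under both multiplication by a positive element of $\DD_+$ and under conjugation becomes a direct consequence of the multiplicative structure \eqref{sum-prod-qq} of $\DD$.
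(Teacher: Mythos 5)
Your proof is correct and follows essentially the same route as the paper: both rely on the transformation laws \eqref{hat_Phi_pn2-Phi_pn2-mov_R42-tl}, \eqref{tildPhiPr2-tl} and \eqref{tildPhiPr2-t_bs-tl}, the fact that $t'^{\,4}\in\DD_+$, and the closure of $\DD_+$ under multiplication and conjugation. Your reformulation of the three types as sign patterns of the null-basis components is just a component-wise rephrasing of the paper's condition $\varepsilon\,{\Phi'^\bot}^2\in\DD_+$, so the argument is the same in substance.
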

\begin{proof}
First we consider a holomorphic change of the isothermal coordinates. Then equalities \eqref{tildPhiPr2-tl} are valid.
The square of any number in $\DD\backslash\DD_0$ belongs to $\DD_+$ and therefore $t'^{\,4}\in\DD_+$.
If $\varepsilon{\Phi'^\bot}^2\in\DD_+$, then also $\varepsilon{\Phi'^\bot}^2 t'^{\,4}\in\DD_+$,
since the product of two numbers in $\DD_+$ is also in $\DD_+$. Taking into account \eqref{tildPhiPr2-tl}, it follows 
that the type of the surfaces is preserved under a holomorphic change of the coordinates.

Next we consider an anti-holomorphic change $t=\bar s$ of the isothermal coordinates. Then equalities 
\eqref{tildPhiPr2-t_bs-tl} are valid.
If $\varepsilon{\Phi'^\bot}^2\in\DD_+$, then also $\bar\varepsilon\overline{{\Phi_t^{\prime \bot}}^2 (\bar s)}\in\DD_+$,
since $\DD_+$ is closed with respect to complex conjugation. If $\M$ is of the first or the second type, then
we have $\varepsilon=\pm 1$\,, from where $\bar\varepsilon=\varepsilon$ and therefore the type of $\M$ is preserved.
If $\M$ is of the third type, then we have $\varepsilon=\pm\jj$\,, from where $\bar\varepsilon=\mp\jj$
and consequently the type of $\M$ is also preserved.

 Finally the fact that the type of $\M$ is invariant under a motion in $\RR^4_2$ follows from 
\eqref{hat_Phi_pn2-Phi_pn2-mov_R42-tl}. 
\end{proof}
\begin{remark}
 The surfaces of the third type can not be divided into two separate classes depending on the sign of
$\Im \big({\Phi^{\prime\bot}}^2\big)$, as in the case of surfaces of the first or the second type, which differ 
in the sign of $\Re \big({\Phi^{\prime\bot}}^2\big)$. It is this way, because as it is seen from the proof 
of the above theorem (or from formula \eqref{tildPhiPr2-t_bs-tl}), the sign of $\Im \big({\Phi^{\prime\bot}}^2\big)$ 
converts under a change of the coordinates of the type $t=\bar s$.
\end{remark}

Now we shall consider the relationship between the degenerate points and the type of the surfaces from the one hand, and 
the Gauss curvature $K$ and the normal curvature $\varkappa$ from the other. We have the following statement:
\begin{theorem}\label{DegP_kind123-K_kappa-tl}
 Let $\M$ be a minimal time-like surface in $\RR^4_2$ and $p$ be a point on $\M$. 
Then:
\begin{enumerate}
	\item The point $p$ is degenerate if and only if $K^2(p)-\varkappa^2(p)=0$\,. 
	\item The surface $\M$ is of the first or the second type if and only if $K^2-\varkappa^2>0$\,. 
	\item The surface $\M$ is of the third type if and only if $K^2-\varkappa^2<0$\,. 
\end{enumerate}
\end{theorem}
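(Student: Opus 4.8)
The plan is to collapse the whole discriminant $K^2-\varkappa^2$ into the single scalar $\big|{\Phi'^\bot}^2\big|^2$, after which all three assertions reduce to reading off signs against Definition \ref{DegP-def-tl} and Definition \ref{Min_Surf_kind123-def-tl}. All three definitions (degenerate point, first/second type, third type) are already phrased purely in terms of $\big|{\Phi'^\bot}^2\big|^2$, so a single algebraic identity will settle everything.

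First I would pass to the isotropic normal basis $(\m_1,\m_2)$ and write $\Phi'^\bot=c_1\m_1+c_2\m_2$ as in \eqref{Phi_pn_m12_R42-tl}. Using the relations \eqref{m12_psd_ort_R42-tl}, i.e. $\m_1^2=\m_2^2=0$ and $\m_1\m_2=\tfrac12$, a one-line expansion gives ${\Phi'^\bot}^2=c_1c_2$, and hence $\big|{\Phi'^\bot}^2\big|^2=|c_1c_2|^2=|c_1|^2|c_2|^2$. Next I would invoke the curvature expressions \eqref{K_kappa_c12_R42-tl}, namely $K=-\Re(c_1\bar c_2)/E^2$ and $\varkappa=-\Im(c_1\bar c_2)/E^2$. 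The key step is the double-number identity $\Re(w)^2-\Im(w)^2=|w|^2$ (immediate from $|u+\jj v|^2=u^2-v^2$), applied to $w=c_1\bar c_2$, which yields
\[
K^2-\varkappa^2=\frac{\big(\Re(c_1\bar c_2)\big)^2-\big(\Im(c_1\bar c_2)\big)^2}{E^4}=\frac{|c_1\bar c_2|^2}{E^4}.
\]
Since conjugation in $\DD$ preserves the modulus and the modulus is multiplicative, $|c_1\bar c_2|^2=|c_1|^2|c_2|^2=|c_1c_2|^2=\big|{\Phi'^\bot}^2\big|^2$, so that $K^2-\varkappa^2=\big|{\Phi'^\bot}^2\big|^2/E^4$.

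Because $E<0$ forces $E^4>0$, the sign of $K^2-\varkappa^2$ is exactly the sign of $\big|{\Phi'^\bot}^2\big|^2$, and the three claims follow simultaneously: $p$ is degenerate, i.e. ${\Phi'^\bot}^2\in\DD_0$, precisely when $\big|{\Phi'^\bot}^2\big|^2=0$, precisely when $K^2-\varkappa^2=0$; the surface is of the first or the second type, i.e. $\big|{\Phi'^\bot}^2\big|^2>0$, precisely when $K^2-\varkappa^2>0$; and of the third type, i.e. $\big|{\Phi'^\bot}^2\big|^2<0$, precisely when $K^2-\varkappa^2<0$.

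The point to watch is not any heavy calculation but the indefiniteness of the double-number modulus: each factor $|c_i|^2$ may be positive, negative, or zero, so the product $|c_1|^2|c_2|^2$ can take either sign. It is exactly this indefiniteness that produces the genuinely new third type with $K^2-\varkappa^2<0$. I would deliberately route the argument through $c_1,c_2$ rather than through the Weingarten description \eqref{K_kappa_nu_lambda_rho_mu-tl}, since the direct approach would demand an awkward factorization of $(\nu^2-\lambda^2-\rho^2+\mu^2)^2-(2\nu\mu-2\rho\lambda)^2$, whereas the isotropic-basis computation makes the identity $K^2-\varkappa^2=\big|{\Phi'^\bot}^2\big|^2/E^4$ transparent.
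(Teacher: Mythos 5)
Your proposal is correct and follows essentially the same route as the paper: both pass to the isotropic basis to get ${\Phi'^\bot}^2=c_1c_2$, use \eqref{K_kappa_c12_R42-tl} together with the identity $\Re^2(w)-\Im^2(w)=|w|^2$ for $w=c_1\bar c_2$, and conclude $\big|{\Phi'^\bot}^2\big|^2=E^4(K^2-\varkappa^2)$, from which all three equivalences are read off. No gaps.
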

\begin{proof}
 To prove the theorem, we shall express $\big|{\Phi^{\prime\bot}}^2\big|^2$ through the curvatures $K$ and $\varkappa$.
For that purpose, we use the representation \eqref{Phi_pn_m12_R42-tl} of $\Phi^{\prime \bot}$ in the isotropic basis,
introduced by \eqref{m12_n12_R42-tl}. Taking a square of the equality \eqref{Phi_pn_m12_R42-tl}, we get:
\begin{equation}\label{Phi_pn2_m12_R42-tl}
{\Phi^{\prime\bot}}^2 = 2c_1c_2\m_1\m_2\ = c_1c_2 \,; \qquad \quad
\Big|{\Phi^{\prime\bot}}^2\Big|^2 = |c_1c_2|^2 .
\end{equation}
On the other hand, equalities \eqref{K_kappa_c12_R42-tl} for $K$ and $\varkappa$ give that:
\begin{equation*}
K^2-\varkappa^2 = \frac{\Re^2(c_1\bar c_2)-\Im^2(c_1\bar c_2)}{E^4} = \frac{|c_1 \bar c_2|^2}{E^4}
= \frac{|c_1c_2|^2}{E^4}\:.
\end{equation*}
Consequently
\begin{equation}\label{Phi_pn2_E_K_kappa_R42-tl}
\Big|{\Phi^{\prime\bot}}^2\Big|^2 = E^4(K^2-\varkappa^2)\,.
\end{equation}
Now, the three equivalences of the theorem follow immediately from the above equality.
\end{proof}
\begin{remark}
The surfaces from the first and second type can not be distinguished solely by their invariants $K$ and $\varkappa$.
Further, with the aid of Theorem \ref{Can_antimov_conj_R42-tl} we shall see that to any surface of the first type corresponds a surface of the second type and both surfaces are isometric. In this isometry the curvatures $K$ and $\varkappa$ 
remain invariant.
\end{remark}

 Let us now return to the question of the canonical coordinates. Theorem \ref{kind123_Change_Move-tl} implies that
the equality ${\Phi'^{\bot}}^2=1$ is only possible for surfaces of the first type. For the surfaces of the second 
type it is natural to require the equality ${\Phi'^{\bot}}^2=-1$, while for the surfaces of the third type it is 
natural to require ${\Phi'^{\bot}}^2=\pm\jj$. Moreover, for the surfaces of the third type it is sufficient to 
require only the case $+\jj$, since the case $-\jj$ is reduced to the previous through a change of the coordinates 
of the type $t=\bar s$. It is clear, that the three formulas can be combined into one formula of the type 
${\Phi'^{\bot}}^2=\varepsilon$, where $\varepsilon=\pm1;\,\jj$ depending on the type of the surface. 
So we come to the following definition:
\begin{dfn}\label{Can-def_R42-tl}
Let $\M$ be a minimal time-like surface of general type in $\RR^4_2$, parametrized by isothermal coordinates $(u,v)$, 
such that $E<0$. The coordinates $(u, v)$ on $\M$ are said to be \textbf{canonical}, if the function $\Phi$, 
defined by \eqref{Phi_def-tl}, satisfies the condition: 
\begin{equation}\label{Can-Phi_R42-tl}
{\Phi'^\bot}^2=\varepsilon\,,
\end{equation}
where $\varepsilon=1$ for the surfaces of the first type, $\varepsilon=-1$ for the surfaces of the second type\, and
$\varepsilon=\jj$ for the surfaces of the third type.
\end{dfn}
This definition is analytic, but it  follows from \eqref{hat_Phi_pn2-Phi_pn2-mov_R42-tl} that the canonical coordinates
are geometrically related to the surface under consideration. The following statement is valid:
\begin{theorem}\label{Can_Move-tl}
 Let the minimal time-like surface $\hat\M$ be obtained from the minimal time-like surface $\M$ through 
a motion in $\RR^4_2$. If $(u,v)$ are canonical coordinates for $\M$, then they are also canonical for $\hat\M$.
\end{theorem}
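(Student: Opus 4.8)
The plan is to reduce the statement to two invariance results already established in this section, namely the invariance of $\left.\Phi'^\bot\right.^2$ under motions, formula \eqref{hat_Phi_pn2-Phi_pn2-mov_R42-tl}, and the invariance of the type of the surface under motions, Theorem \ref{kind123_Change_Move-tl}. According to Definition \ref{Can-def_R42-tl}, to say that $(u,v)$ are canonical for $\hat\M$ amounts to three requirements: that $\hat\M$ be of general type, that $\hat E<0$, and that $\left.\hat\Phi'^\bot\right.^2$ equal the number $\varepsilon=\pm 1;\,\jj$ prescribed by the type of $\hat\M$ through \eqref{Can-Phi_R42-tl}. I would verify these three points in turn.

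First I would confirm that $(u,v)$ remain admissible isothermal coordinates on $\hat\M$ and that $\hat\M$ is again of general type. The latter is a direct consequence of Theorem \ref{DegP_Change_Move-tl}: the property of a point to be degenerate is invariant under a motion in $\RR^4_2$, so since $\M$ is free of degenerate points, so is $\hat\M$. The condition $\hat E<0$ follows because a motion is an isometry, whence the coefficients of the first fundamental form are preserved and $\hat E=E<0$.

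It remains to match the prescribed constant $\varepsilon$. By Theorem \ref{kind123_Change_Move-tl} the type of $\hat\M$ coincides with the type of $\M$, so the value of $\varepsilon$ dictated by Definition \ref{Can-def_R42-tl} is one and the same for both surfaces. Since $(u,v)$ are canonical for $\M$, we have $\left.\Phi'^\bot\right.^2=\varepsilon$, and formula \eqref{hat_Phi_pn2-Phi_pn2-mov_R42-tl} then yields $\left.\hat\Phi'^\bot\right.^2=\left.\Phi'^\bot\right.^2=\varepsilon$, which is exactly condition \eqref{Can-Phi_R42-tl} for $\hat\M$ with the correct $\varepsilon$. As every step merely cites an earlier invariance statement, there is no genuine obstacle; the only care needed is to treat the numerical condition on $\left.\Phi'^\bot\right.^2$ and the choice of $\varepsilon$ fixed by the type as a \emph{matched} pair, which is precisely why it is essential that \eqref{hat_Phi_pn2-Phi_pn2-mov_R42-tl} holds for improper motions as well, and not only for proper ones.
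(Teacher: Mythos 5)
Your proof is correct and follows essentially the same route as the paper, which states this theorem as an immediate consequence of the invariance formula \eqref{hat_Phi_pn2-Phi_pn2-mov_R42-tl} (together with the already-established invariance of degenerate points and of the type under motions). You have merely spelled out the details that the paper leaves implicit, including the correct observation that the matching of $\varepsilon$ relies on Theorem \ref{kind123_Change_Move-tl} and that \eqref{hat_Phi_pn2-Phi_pn2-mov_R42-tl} covers improper motions as well.
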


 With the help of the equality \eqref{PhiPr_bot^2-tl} we can characterize the canonical coordinates through 
the second fundamental form $\sigma$:
\begin{prop}\label{Can-sigma_R42-tl}
 Let $\M$ be a time-like surface of general type in $\RR^4_2$, parametrized by isothermal coordinates $(u,v)$, 
such that $E<0$\,. 

If $\M$ is of \textbf{the first or the second type}, then these coordinates are canonical if and only if
the second fundamental form $\sigma$ satisfies the conditions:
\begin{equation}\label{sigma_uv_can12_R42-tl}
\sigma (\x_u,\x_u)\bot \: \sigma(\x_u,\x_v)\,,\qquad \sigma^2(\x_u,\x_u)+\sigma^2(\x_u,\x_v)=\pm 1\,,
\end{equation}
where the sign "$+$" is for the surfaces of the first type, and the sign "$-$" is for the surfaces 
of the second type.

 If $\M$ is of \textbf{the third type}, then these coordinates are canonical if and only if the second 
fundamental form $\sigma$ satisfies the conditions:
\begin{equation}\label{sigma_uv_can3_R42-tl}
2\sigma (\x_u,\x_u)\sigma(\x_u,\x_v)=1\,,\qquad \sigma^2(\x_u,\x_u)+\sigma^2(\x_u,\x_v)=0\,.
\end{equation}
\end{prop}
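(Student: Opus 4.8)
The plan is to read off both statements directly from the already-established identity \eqref{PhiPr_bot^2-tl}, which expresses ${\Phi^{\prime\bot}}^2$ as a double number whose $1$-component is $\sigma^2(\x_u,\x_u)+\sigma^2(\x_u,\x_v)$ and whose $\jj$-component is $2\,\sigma(\x_u,\x_u)\cdot\sigma(\x_u,\x_v)$. Since $(1,\jj)$ is a basis of $\DD$, two double numbers coincide if and only if their $1$-components and their $\jj$-components coincide separately. Hence the single defining condition \eqref{Can-Phi_R42-tl}, namely ${\Phi^{\prime\bot}}^2=\varepsilon$, is equivalent to the pair of scalar equations obtained by matching the $1$-part and the $\jj$-part of \eqref{PhiPr_bot^2-tl} against $\varepsilon$. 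I would also note at the outset that the vanishing of the $\jj$-component, $\sigma(\x_u,\x_u)\cdot\sigma(\x_u,\x_v)=0$, is precisely the orthogonality $\sigma(\x_u,\x_u)\bot\sigma(\x_u,\x_v)$.

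Next I would treat the first and second type, where by Definition \ref{Can-def_R42-tl} we have $\varepsilon=\pm 1$, a purely real double number with vanishing $\jj$-component. Matching components of \eqref{PhiPr_bot^2-tl} against $\pm 1$ forces $2\,\sigma(\x_u,\x_u)\cdot\sigma(\x_u,\x_v)=0$ and $\sigma^2(\x_u,\x_u)+\sigma^2(\x_u,\x_v)=\pm 1$, which are exactly the conditions \eqref{sigma_uv_can12_R42-tl}, with the sign ``$+$'' for the first type and ``$-$'' for the second. Conversely, substituting the two conditions \eqref{sigma_uv_can12_R42-tl} back into \eqref{PhiPr_bot^2-tl} yields ${\Phi^{\prime\bot}}^2=\pm 1=\varepsilon$, so the coordinates are canonical.

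For the third type we have $\varepsilon=\jj$, whose $1$-component is $0$ and whose $\jj$-component is $1$. Matching components now gives $\sigma^2(\x_u,\x_u)+\sigma^2(\x_u,\x_v)=0$ together with $2\,\sigma(\x_u,\x_u)\cdot\sigma(\x_u,\x_v)=1$, which are the conditions \eqref{sigma_uv_can3_R42-tl}; the converse is again immediate by substitution into \eqref{PhiPr_bot^2-tl}. I do not expect a genuine obstacle here: once \eqref{PhiPr_bot^2-tl} is in hand, the proof is a component-by-component identification in the basis $(1,\jj)$, and the only point worth care is to keep track of which scalar is the $1$-part and which is the $\jj$-part when reading off the prescribed value of $\varepsilon$ for each type.
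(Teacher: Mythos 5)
Your proposal is correct and follows exactly the route the paper intends: the proposition is stated immediately after equality \eqref{PhiPr_bot^2-tl} precisely so that the canonical condition ${\Phi^{\prime\bot}}^2=\varepsilon$ can be read off by matching the $1$- and $\jj$-components of that identity against $\varepsilon=\pm 1$ or $\varepsilon=\jj$. Nothing is missing.
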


 As we noted earlier, the function $\Phi^{\prime \bot}$ is not holomorphic in general as a consequence of the fact 
that the orthogonal projection does not preserve the holomorphy. 
However, it turns out that the scalar square ${\Phi^{\prime \bot}}^2$ is always a holomorphic function. In order 
to prove that, we consider equalities \eqref{Phipn-tl}. Squaring the second of them, we find:
\[
{\Phi^{\prime\bot}}^2 =
{\Phi'}^2-2\Phi'\ds\frac{\Phi' \cdot \bar \Phi}{\|\Phi\|^2}\Phi+\left(\ds\frac{\Phi' \cdot \bar \Phi}{\|\Phi\|^2}\right)^2 \Phi^2.
\]
Taking into account that $\Phi^2=0$\; and\; $\Phi\cdot\Phi^\prime=0$\; we get:
\begin{equation}\label{Phi_prim_bot^2-Phi_prim^2-tl}
{\Phi^{\prime \bot}}^2={\Phi^\prime}^2.
\end{equation}
It follows from here that ${\Phi^{\prime \bot}}^2$ is a holomorphic function. 

\medskip

 We will now clarify the question of existence of canonical coordinates. 

\begin{theorem}\label{Can_Coord-exist_R42-tl}
If $\M$ is a minimal time-like surface in $\RR^4_2$ of general type, then it admits locally canonical coordinates.
\end{theorem}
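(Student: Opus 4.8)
The plan is to produce the canonical coordinates by an explicit holomorphic change of the isothermal parameters, driven entirely by the transformation formulas already established. Write $g := {\Phi'^\bot}^2$. By \eqref{Phi_prim_bot^2-Phi_prim^2-tl} we have $g = {\Phi'}^2$, so $g$ is a holomorphic $\DD$-valued function, and since $\M$ is of general type, $g(t)\notin\DD_0$ at every point. The rule \eqref{tildPhiPr2-tl} shows that under a holomorphic change $t=t(s)$ the quantity transforms as ${\tilde\Phi_s'^\bot}^2 = g\,(t')^4$, with $t'=dt/ds$. Hence the canonical condition ${\tilde\Phi_s'^\bot}^2=\varepsilon$ of Definition \ref{Can-def_R42-tl} is equivalent, for the inverse change $s=s(t)$ with $s'=ds/dt=1/t'$, to the single scalar equation $(s')^4 = g/\varepsilon$. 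Everything therefore reduces to solving this first-order equation for $s(t)$ within the holomorphic category.

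First I would normalize the right-hand side to lie in $\DD_+$. For the first type ($\varepsilon=1$) we have $g\in\DD_+$, so $g/\varepsilon=g\in\DD_+$; for the second type ($\varepsilon=-1$) we have $-g\in\DD_+$, so again $g/\varepsilon=-g\in\DD_+$. For the third type ($\varepsilon=\jj$, so $g/\varepsilon=\jj g$) the surface satisfies $\pm\jj g\in\DD_+$; if $+\jj g\in\DD_+$ we are done, while if instead $-\jj g\in\DD_+$ I would first perform the anti-holomorphic change $t=\bar s$, which by \eqref{tildPhiPr2-t_bs-tl} replaces $g$ by $\overline{g(\bar s)}$ and, since conjugation interchanges the null-basis vectors of \eqref{qq-def}, swaps the two components and moves us to the case $+\jj g\in\DD_+$. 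In all cases $g/\varepsilon$ is now a holomorphic function with values in $\DD_+$, so by the root property recorded in Section \ref{sect_preliminaries_R42-tl} its fourth root $h:=(g/\varepsilon)^{1/4}$ is a well-defined holomorphic function with values in $\DD_+$; in particular $|h|^2>0$. I then take $s(t)$ to be a local primitive of $h$, so that $s$ is holomorphic with $s'(t)=h(t)$ and $|s'(t)|^2=|h|^2>0\neq 0$. By the inverse function theorem for holomorphic functions in $\DD$ (Section \ref{sect_preliminaries_R42-tl}) the map $t\mapsto s$ is, at least locally, a holomorphic diffeomorphism, hence provides a genuine new holomorphic isothermal coordinate.

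It remains to check admissibility and the canonical property. Admissibility means $\tilde E<0$: from \eqref{E_s-hol-tl} we get $\tilde E(s)=E(t(s))\,|t'(s)|^2$, and since $|t'(s)|^2=1/|s'(t)|^2=1/|h|^2>0$ while $E<0$, indeed $\tilde E<0$. Finally, substituting $(t')^4=1/h^4=\varepsilon/g$ into ${\tilde\Phi_s'^\bot}^2=g\,(t')^4$ yields ${\tilde\Phi_s'^\bot}^2=\varepsilon$, which is exactly \eqref{Can-Phi_R42-tl}. I expect the only genuinely delicate point to be the construction of the fourth root: one must verify that $g/\varepsilon$ really takes values in $\DD_+$ so that the root stays single-valued and holomorphic with values in $\DD_+$, which in the double-number setting amounts to taking ordinary positive fourth roots component-wise in the null basis \eqref{qq-def}. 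This is precisely why the preliminary reduction of the third type by $t=\bar s$ is needed; everything else follows directly from the transformation formulas.
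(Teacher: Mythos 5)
Your proposal is correct and follows essentially the same route as the paper's proof: reduce the canonical condition via \eqref{tildPhiPr2-tl} to the first-order holomorphic equation $(s')^4=\varepsilon\,{\Phi'}^2$ (using ${\Phi'^\bot}^2={\Phi'}^2$ from \eqref{Phi_prim_bot^2-Phi_prim^2-tl}), take the fourth root of a $\DD_+$-valued holomorphic function, integrate, and verify $|s'|^2>0$ for local invertibility and the admissibility condition $\tilde E<0$. The only difference is that you explicitly dispose of the sub-case $-\jj\,{\Phi'^\bot}^2\in\DD_+$ of the third type by a preliminary change $t=\bar s$, a point the paper handles only by the remark preceding Definition \ref{Can-def_R42-tl}; this is extra care rather than a divergence of method.
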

\begin{proof}
  Let the surface $\M$ be parametrized by isothermal coordinates $(u,v)$ and let $t=u+\jj v \in\DD$. 
Next we introduce isothermal coordinates through the formula $t=t(s)$, where $t(s)$ is a holomorphic function,
such that the coordinates determined by the new variable $s\in\DD$, are canonical. According to Definition 
\ref{Can-def_R42-tl} and formula \eqref{tildPhiPr2-tl}, the new coordinates are canonical if and only if: 
\begin{equation}\label{condcan_R42-tl}
{\Phi_t^{\prime \bot}}^2 t'^{\,4} = \left.\tilde\Phi_s^{\prime\bot}\right.^2 = \varepsilon \,,
\end{equation}
where $\varepsilon=\pm1;\,\jj$ depending on the type of the surface. The last condition is equivalent to 
\begin{equation}\label{condcan2_R42-tl}
\varepsilon\, {\Phi_t^{\prime \bot}}^2 t'^{\,4} =  1 \,.
\end{equation}
It follows from Definition \ref{Min_Surf_kind123-def-tl} that: $\varepsilon\,{\Phi^{\prime \bot}}^2 \in \DD_+$, 
which means that $\sqrt[4]{{\varepsilon\,\Phi_t^{\prime \bot}}^2} \in \DD_+$ is a well defined smooth function.
Taking a square root in \eqref{condcan2_R42-tl}, we find the following complex (over $\D$) ODE of the first order
for $t(s)$:
\begin{equation*}
\sqrt[4]{{\varepsilon\,\Phi_t^{\prime \bot}}^2}\:dt = ds\,.
\end{equation*}
In view of \eqref{Phi_prim_bot^2-Phi_prim^2-tl}, we can replace
${\Phi_t^{\prime \bot}}^2$ with ${\Phi_t^\prime}^2$ and then we have:
\begin{equation}\label{eqcan_R42-tl}
ds = \sqrt[4]{{\varepsilon\,\Phi_t^\prime}^2}\:dt\,.
\end{equation}
The last equation \eqref{eqcan_R42-tl} is an ODE with separable variables. Since the square root of the holomorphic 
function ${\Phi_t^\prime}^2$ is a holomorphic function in $\DD_+$, then $\ds\sqrt[4]{{\varepsilon\,\Phi_t^\prime}^2}$
is also a holomorphic function. Consequently, we obtain a solution to \eqref{eqcan_R42-tl} by an integration:
\begin{equation}\label{eqcan-sol_R42-tl}
s = \int\sqrt[4]{{\varepsilon\,\Phi_t^\prime}^2}\:dt\,.
\end{equation}
The last equality implies that $s'(t)=\sqrt[4]{{\varepsilon\,\Phi_t^{\prime\vphantom{\bot}}}^2} \in \DD_+$\,, 
and $|s'(t)|^2>0$.  Consequently \eqref{eqcan-sol_R42-tl} determines $s$ as a holomorphic and locally reversible
function of $t$. This means that $s$ determines locally new isothermal coordinates. 
The inequality $|s'(t)|^2>0$\,, also guarantees the condition $\tilde E(s)<0$ for the new coordinates, according to
\eqref{E_s-hol-tl}. Equation \eqref{eqcan_R42-tl} is equivalent to \eqref{condcan2_R42-tl} and \eqref{condcan_R42-tl}.
Therefore, the new coordinates are canonical. 
\end{proof}

We now move on to the question of the uniqueness of the canonical coordinates.
\begin{theorem}\label{Can_Coord-uniq_R42-tl}
Let $\M$ be a minimal time-like surface in $\RR^4_2$ of general type and let $t\in\DD$, $s\in\tilde\DD$ be two variables,
giving canonical coordinates in a neighborhood of a given point on $\M$.
If $\M$ is of the first or the second type, then $t$ and $s$ are related as follows:
\begin{equation}\label{uniq-kind12_R42-tl}
t=\pm s+c\,; \qquad\quad t=\pm \bar s + c\,.
\end{equation}
If $\M$ is of the third type, then $t$ and $s$ are related by an equality of the type:
\begin{equation}\label{uniq-kind3_R42-tl}
t=\pm s + c\,.
\end{equation}
In the above equalities $c\in\DD$ is a constant. 
\end{theorem}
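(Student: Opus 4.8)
The plan is to exploit the transformation law for ${\Phi'^\bot}^2$ under coordinate changes, namely \eqref{tildPhiPr2-tl} and \eqref{tildPhiPr2-t_bs-tl}, together with the fact (Proposition~\ref{Orient-holo-tl}) that any change of isothermal coordinates $t=t(s)$ is either holomorphic or anti-holomorphic. Since the type of the surface is a geometric invariant (Theorem~\ref{kind123_Change_Move-tl}), both coordinate systems are canonical \emph{with the same} $\varepsilon$, so I may substitute the canonical condition \eqref{Can-Phi_R42-tl} on both sides of the transformation formulas. I will also use repeatedly the admissibility constraint $|t'|^2>0$, which is forced by $E<0$ and $\tilde E<0$ via \eqref{E_s-hol-tl}, and I work on a connected neighborhood.

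First I would treat the holomorphic case. Inserting ${\Phi_t^{\prime\bot}}^2=\varepsilon$ and $\left.\tilde\Phi_s^{\prime\bot}\right.^2=\varepsilon$ into \eqref{tildPhiPr2-tl} gives $\varepsilon=\varepsilon\,t'^{\,4}$; as $\varepsilon\in\{1,-1,\jj\}$ is invertible in $\DD$, this collapses to $t'^{\,4}=1$. The key algebraic step is to solve this in $\DD$: writing $t'=a\qq+b\bar\qq$ and using the component-wise multiplication \eqref{sum-prod-qq}, I get $a^4=b^4=1$, hence $a,b\in\{\pm1\}$ and $t'\in\{1,-1,\jj,-\jj\}$. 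Since $t'$ is continuous and this set is discrete, $t'$ is constant on the connected neighborhood; and since $|\jj|^2=-1<0$, the admissibility $|t'|^2>0$ eliminates $\pm\jj$, leaving $t'=\pm1$ and therefore $t=\pm s+c$.

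Next I would reduce the anti-holomorphic case to the holomorphic one by factoring the change through the special substitution $w=\bar s$. By \eqref{Phi_s-t_bs-tl} and \eqref{tildPhiPr2-t_bs-tl}, the coordinate $w$ satisfies ${\Phi_w^{\prime\bot}}^2=\bar\varepsilon$, and $t$ becomes a holomorphic function of $w$. For the first and second types $\varepsilon=\pm1$, so $\bar\varepsilon=\varepsilon$: the coordinate $w$ is again canonical of the same type, and the holomorphic case applied to $t=t(w)$ yields $t=\pm w+c=\pm\bar s+c$, which together with the holomorphic case gives \eqref{uniq-kind12_R42-tl}. For the third type $\varepsilon=\jj$, so $\bar\varepsilon=-\jj$; matching canonical values across the holomorphic change $t=t(w)$ now forces $(dt/dw)^{4}=\bar\varepsilon/\varepsilon=-1$, and since $a^4=-1$ has no real solution this case is vacuous. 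Hence only holomorphic changes occur in the third type, giving \eqref{uniq-kind3_R42-tl}.

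The main obstacle I anticipate is the careful bookkeeping in the anti-holomorphic case: one must verify that $w=\bar s$ genuinely produces canonical coordinates precisely for the first and second types (via $\bar\varepsilon=\varepsilon$) and fails for the third (via $\overline{\jj}=-\jj$), since it is exactly this asymmetry of conjugation on $\jj$ that excludes the $\pm\bar s$ branch in \eqref{uniq-kind3_R42-tl}. Everything else --- solving $t'^{\,4}=\pm1$ in the null basis and the sign check $|\jj|^2=-1$ --- is routine.
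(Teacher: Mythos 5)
Your proof is correct and follows essentially the same route as the paper's: reduce to $t'^{\,4}=1$ in the holomorphic case, discard $t'=\pm\jj$ via $|t'|^2>0$, handle the anti-holomorphic case by factoring through $w=\bar s$, and use $\overline{\jj}=-\jj$ to exclude that branch for the third type. Your explicit solution of $t'^{\,4}=1$ in the null basis and the remark that continuity forces $t'$ to be constant merely make explicit what the paper leaves implicit.
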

\begin{proof}
Suppose the surface $\M$ is of the first or the second type. We first consider the case, when $t$ is a
holomorphic function of $s$. Then formula \eqref{tildPhiPr2-tl} is applicable and since by default $t$ and $s$
give canonical coordinates, we have: 
${\Phi_t^{\prime \bot}}^2 = \left.\tilde\Phi_s^{\prime\bot}\right.^2 = \varepsilon$\,, where $\varepsilon = \pm 1$\,.
Therefore \eqref{tildPhiPr2-tl} is reduced to $t'^4 = 1$, from where $t'(s) = \pm 1;\ \pm \jj$\,.
Equalities $t' = \pm \jj$ are dropped, since they give $|t'|^2 = -1 < 0$\,, which is impossible according to \eqref{E_s-hol-tl}.
Thus, it only remains $t'(s) = \pm 1$\,, which is equivalent to the first equality in \eqref{uniq-kind12_R42-tl}.

 Further, let $t$ be an anti-holomorphic function of $s$. Introducing an additional variable $r\in\DD$ through the equality
$r=\bar s$, then formula \eqref{tildPhiPr2-t_bs-tl} is applicable to $r$ and $s$ and then it follows that $r$ also gives
canonical coordinates on $\M$. Since $t$ is a holomorphic function of $r$, then it follows from the above that $t$ and $r$ 
satisfy the first equality in \eqref{uniq-kind12_R42-tl}. It follows from here that the second equality in 
\eqref{uniq-kind12_R42-tl} holds for $t$ and $s$.

 Let us now suppose that  $\M$ is of the third type.
If $t$ is a holomorphic function of $s$, then just as in the case of the surfaces of the first or the second type 
it is proved that the equality \eqref{uniq-kind3_R42-tl} holds. It is impossible for $t$ to be an anti-holomorphic function
of $s$, since it follows from \eqref{tildPhiPr2-tl} and \eqref{tildPhiPr2-t_bs-tl} that under an anti-holomorphic change of 
the coordinates the sign of the imaginary part of ${\Phi^{\prime \bot}}^2$ also changes. This contradicts to equalities
${\Phi_t^{\prime \bot}}^2 = \left.\tilde\Phi_s^{\prime\bot}\right.^2 = \jj$. Thus, it only remains \eqref{uniq-kind3_R42-tl}.
\end{proof}
\begin{remark}
In geometric terms the relations \eqref{uniq-kind12_R42-tl} and \eqref{uniq-kind3_R42-tl} mean that 
the canonical coordinates are unique up to an orientation of the coordinate lines.
\end{remark}

\smallskip

 We will finally determine how the canonical coordinates and also the functions $K$ and $\varkappa$, considered 
as functions of these coordinates, are changed under a change of the coordinates and under the basic geometric 
transformations of the minimal time-like surface $\M$ in $\RR^4_2$. 

First we consider a change of the canonical coordinates. Since by default $K$ and $\varkappa$ are scalar invariants
under a change of the local coordinates, then the necessary formulas in this case are given by Theorem 
\ref{Can_Coord-uniq_R42-tl}\,. Thus, under a holomorphic change of the canonical coordinates the new functions
$\tilde K$ and $\tilde\varkappa$ in the three considered cases for the given surface are as follows: 
\begin{equation}\label{tilde_K_kappa-can_coord_ch_holo_R42-tl}
\tilde K(s)=K(\pm s+c)\,; \qquad \tilde\varkappa(s)=\varkappa(\pm s+c)\,.
\end{equation}
Respectively, under an anti-holomorphic change of the coordinates, which is only admissible for the surfaces
of the first or the second type, we have:
\begin{equation}\label{tilde_K_kappa-can_coord_ch_antiholo_R42-tl}
\tilde K(s)=K(\pm \bar s + c)\,; \qquad \tilde\varkappa(s)=\varkappa(\pm \bar s + c)\,.
\end{equation}

\smallskip

 Now suppose that $\hat\M$ is obtained from $\M$ through a motion of the type $\hat\x(t)=A\x(t)+\vb$;\, $A \in \mathbf{O}(2,2,\RR)$
and the variable $t$ gives canonical coordinates on $\M$. Theorems \ref{DegP_Change_Move-tl}\,, \ref{kind123_Change_Move-tl} 
and \ref{Can_Move-tl}\, guarantee that the type of the surface and the canonical coordinates are invariant when
applying a motion to the surface in $\RR^4_2$. Hence, transformation formulas for $K$ and $\varkappa$ are a special case
of \eqref{hat_K_kappa-K_kappa-mov_R42-tl} and have the form:
\begin{equation}\label{hat_K_kappa-can-mov_R42-tl}
\hat K(t)=K(t)\,; \qquad \hat\varkappa(t) = \varepsilon\varkappa(t) \,;
\qquad\quad  \varepsilon = \det A\,.
\end{equation}

\smallskip

 Now consider the case when $\hat\M$ is obtained from $\M$ through an anti-isometry of the type 
$\hat\x(t)=A\x(t)+\vb$;\, $A \in \mathbf{AO}(2,2,\RR)$. Taking a square in the last of the formulas
\eqref{hat_Phi_pn-Phi_pn-antimov_R42-tl}, we get:
\begin{equation}\label{hat_Phi_pn2-Phi_pn2-antimov_R42-tl}
\left.\hat\Phi'^\bot\right.^2 \!\! (s)=-\left.\Phi'^\bot\right.^2 \!\! (\jj s)\,. 
\end{equation}
It follows from here that the degenerate points are invariant under an anti-isometry in $\RR^4_2$.
Furthermore, if $\M$ is of general type, then $\hat\M$ is also of general type. We have:
\begin{theorem}\label{Can_antimov_R42-tl}
 Let the minimal time-like surface $\hat\M$ be obtained from $\M$ by an anti-isometry in $\RR^4_2$
and let $t\in\D\subset\DD$ give canonical coordinates on $\M$. Then we have:
\begin{enumerate}
	\item If $\M$ is of the first type, then $\hat\M$ is of the second type. The coordinates $s$, obtained by the
	formula $t=\jj s$, are canonical for $\hat\M$.
	\item If $\M$ is of the second type, then $\hat\M$ is of the first type. 
	The coordinates $s$, obtained by the formula $t=\jj s$, are canonical for $\hat\M$.
	\item If $\M$ is of the third type, then $\hat\M$ is also of the third type. 
	The coordinates $s$, obtained by the formula $t=\jj\bar s$, are canonical for $\hat\M$.
\end{enumerate}
\end{theorem}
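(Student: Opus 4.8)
The plan is to reduce the whole statement to tracking the single holomorphic invariant ${\Phi'^\bot}^2$, since by Definition~\ref{Min_Surf_kind123-def-tl} its position relative to the null basis fixes the type of the surface, while by \eqref{Can-Phi_R42-tl} its value decides whether a given isothermal coordinate is canonical. The one nontrivial input I would use is the already established formula \eqref{hat_Phi_pn2-Phi_pn2-antimov_R42-tl}, namely $\left.\hat\Phi'^\bot\right.^2(s)=-\left.\Phi'^\bot\right.^2(\jj s)$, which encodes the effect of the anti-isometry together with the renumbering change $t=\jj s$ built into \eqref{hat_M-M-antimov-tl}. Because the remark preceding the theorem already guarantees that $\hat\M$ is of general type, I only have to identify its type and exhibit a canonical coordinate, and Theorem~\ref{kind123_Change_Move-tl} lets me read off the type from ${\Phi'^\bot}^2$ in whatever isothermal coordinate is convenient.

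I would first treat the first and second types together. Assuming $t$ is canonical for $\M$, \eqref{Can-Phi_R42-tl} gives $\left.\Phi'^\bot\right.^2(t)=\varepsilon$ with $\varepsilon=1$ in case (1) and $\varepsilon=-1$ in case (2). Introducing $s$ by $t=\jj s$, so that $s$ is exactly the coordinate figuring in \eqref{hat_Phi_pn2-Phi_pn2-antimov_R42-tl}, that formula yields $\left.\hat\Phi'^\bot\right.^2(s)=-\varepsilon$, a real constant with $|-\varepsilon|^2=1>0$. Reading the sign of $\Re(-\varepsilon)$ against Definition~\ref{Min_Surf_kind123-def-tl} shows that $\hat\M$ is of the second type when $\varepsilon=1$ and of the first type when $\varepsilon=-1$; in both situations $\left.\hat\Phi'^\bot\right.^2(s)$ equals precisely the value ($-1$, resp.\ $+1$) prescribed by \eqref{Can-Phi_R42-tl} for the type of $\hat\M$, so $s$ is canonical. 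This settles (1) and (2).

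Next I would handle the third type, where the extra subtlety lives. Now $\left.\Phi'^\bot\right.^2(t)=\jj$; writing $t=\jj r$ and applying \eqref{hat_Phi_pn2-Phi_pn2-antimov_R42-tl} gives $\left.\hat\Phi'^\bot\right.^2(r)=-\jj$. Since $|-\jj|^2=-1<0$, Definition~\ref{Min_Surf_kind123-def-tl} shows $\hat\M$ is again of the third type, but $r$ is \emph{not} canonical, the normalized value being $+\jj$ rather than $-\jj$. To repair this I would perform the further anti-holomorphic change $r=\bar s$ and invoke \eqref{tildPhiPr2-t_bs-tl}, which transforms ${\Phi'^\bot}^2$ by complex conjugation in $\DD$: this produces $\left.\tilde\Phi_s^{\prime\bot}\right.^2(s)=\overline{-\jj}=\jj$, exactly the canonical value. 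Composing the two substitutions gives $t=\jj r=\jj\bar s$, the coordinate asserted in (3).

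The one genuinely delicate point, and where I expect most of the care to be needed, is the bookkeeping in the third case: one must keep the intermediate coordinate produced directly by the anti-isometry (on which the invariant is $-\jj$) distinct from the final canonical coordinate, and recognize that the additional change $r=\bar s$ is what flips $-\jj$ into $+\jj$ while, by Theorem~\ref{kind123_Change_Move-tl}, leaving the third type unchanged, so that $\hat\M$ cannot slip into another class. Everything else is a direct substitution into \eqref{hat_Phi_pn2-Phi_pn2-antimov_R42-tl} followed by the sign reading of Definition~\ref{Min_Surf_kind123-def-tl}.
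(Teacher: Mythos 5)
Your proposal is correct and follows essentially the same route as the paper's own proof: both rest on the formula $\left.\hat\Phi'^\bot\right.^2(s)=-\left.\Phi'^\bot\right.^2(\jj s)$ from \eqref{hat_Phi_pn2-Phi_pn2-antimov_R42-tl}, read the resulting value against Definition \ref{Min_Surf_kind123-def-tl} and \eqref{Can-Phi_R42-tl}, and in the third case apply the extra conjugation $r=\bar s$ via \eqref{tildPhiPr2-t_bs-tl} to flip $-\jj$ back to $\jj$. The bookkeeping you flag as delicate is exactly the step the paper also singles out, so no changes are needed.
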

\begin{proof}
 As we noted just before formula \eqref{hat_M-M-antimov-tl}, the variable $s$, determined by the
equality $t=\jj s$, gives isothermal coordinates on $\hat\M$, satisfying the condition $\hat E(s) < 0$\,. 
If $\M$ is of the first type, then $\left.\Phi'^\bot\right.^2 \!\! (\jj s)=1$\,. Equality 
\eqref{hat_Phi_pn2-Phi_pn2-antimov_R42-tl} implies that $\left.\hat\Phi'^\bot\right.^2 \!\! (s)=-1$\,.
This means that $\hat\M$ is of the second type with canonical coordinates $s$, which proves the first statement.
The second statement follows in a similar way. If $\M$ is of the third type, then 
$\left.\Phi'^\bot\right.^2 \!\! (\jj s)=\jj$\,. It follows from \eqref{hat_Phi_pn2-Phi_pn2-antimov_R42-tl} 
that $\left.\hat\Phi'^\bot\right.^2 \!\! (s)=-\jj$\,. This means that $\hat\M$ is also of the third type, but
$s$ does not determine canonical coordinates on $\hat\M$. Considering \eqref{tildPhiPr2-t_bs-tl} we see
that in order to obtain canonical coordinates on $\hat\M$, we need to change further $s$ with $\bar s$, which 
proves the third statement.
\end{proof}

 Applying the last theorem and formulas \eqref{hat_K_kappa-K_kappa-antimov_R42-tl}, we obtain the transformation 
formulas for $K$ and $\varkappa$ as functions of the canonical coordinates under an anti-isometry in $\RR^4_2$.
If $\M$ is of the first or the second type, then the necessary formulas are a special case of 
\eqref{hat_K_kappa-K_kappa-antimov_R42-tl} and have the form:
\begin{equation}\label{hat_K_kappa-can-antimov_kind12_R42-tl}
\hat K(s)=-K(\jj s)\,; \qquad \hat\varkappa(s) = -\varepsilon\varkappa(\jj s) \,;
\qquad\quad  \varepsilon = \det A\,.
\end{equation}
If $\M$ is of the third type, then according to Theorem \ref{Can_antimov_R42-tl}\,,
we need to change further $s$ by $\bar s$, and then the formulas get the form:
\begin{equation}\label{hat_K_kappa-can-antimov_kind3_R42-tl}
\hat K(s)=-K(\jj\bar s)\,; \qquad \hat\varkappa(s) = -\varepsilon\varkappa(\jj\bar s) \,;
\qquad\quad  \varepsilon = \det A\,.
\end{equation}

\begin{remark}
In Theorem \ref{Can_Coord-uniq_R42-tl} we proved that changing $s$ by $\bar s$, we obtain again canonical coordinates
on the surfaces of the first or the second type. This means that the change $t=\jj\bar s$ can be applied to all three 
types of surfaces under an anti-isometry in $\RR^4_2$. Then formula \eqref{hat_K_kappa-can-antimov_kind3_R42-tl} has 
to be also applied in all three cases.
\end{remark}

Now let us consider how the canonical coordinates are transformed in the case of a homothety in $\RR^4_2$.
\begin{theorem}\label{Can_Sim-tl}
Let $\M=(\D,\x)$ be a minimal time-like surface of general type in $\RR^4_2$ and let $t\in\D\subset\DD$ be a
variable giving canonical coordinates on $\M$. Let the surface $\hat\M=(\D,\hat\x)$ be obtained from $\M$ 
through a homothety with coefficient $k>0$ in $\RR^4_2$. Then $\hat\M$ is also of general type and is of the same type
as the surface $\M$. If we introduce the variable $s$ by the formula $t=\frac{1}{\sqrt{k}}s$, then $s$ gives
canonical coordinates on $\hat\M$.
\end{theorem}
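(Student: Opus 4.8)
The plan is to reduce everything to two facts already established: how a homothety acts on $\Phi^{\prime\bot}$, and how a holomorphic change of the isothermal coordinate acts on $\big({\Phi^{\prime\bot}}\big)^2$. First I would recall, from Proposition \ref{Min_Surf_Hom-tl} and the discussion preceding \eqref{hat_K_kappa-K_kappa-homotet_R42}, that since $\hat\M=(\D,\hat\x)$ is parametrized by the same coordinate $t$ as $\M$ and is obtained by $\hat\x=k\x$, the corresponding functions satisfy $\hat\Phi=k\Phi$, $\hat\Phi'=k\Phi'$ and $\hat\Phi^{\prime\bot}=k\Phi^{\prime\bot}$; the last relation holds because the homothety scales $\x_u,\x_v$ by $k$ without changing their directions, so $T_p(\hat\M)=T_p(\M)$ and $N_p(\hat\M)=N_p(\M)$ as subspaces of $\RR^4_2$ and the orthogonal projection onto the normal space is unchanged. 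Squaring gives $\big(\hat\Phi^{\prime\bot}\big)^2=k^2\big(\Phi^{\prime\bot}\big)^2$.

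Next I would check that $\hat\M$ is of general type and of the same type as $\M$. Because $\M$ is of general type we have ${\Phi^{\prime\bot}}^2\notin\DD_0$; multiplying by the invertible number $k^2>0$ keeps the value outside $\DD_0$, so ${\hat\Phi^{\prime\bot}}^2\notin\DD_0$ and $\hat\M$ is of general type as well. For the type itself I would use that $\DD_+$ is closed under multiplication by positive reals, which is immediate from the representation $\DD_+=\{a\qq+b\bar\qq:a>0,\ b>0\}$ and the component-wise multiplication in the null basis: writing $k^2=k^2\qq+k^2\bar\qq$ shows $k^2(a\qq+b\bar\qq)=(k^2a)\qq+(k^2b)\bar\qq\in\DD_+$. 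Hence, if $\varepsilon\,{\Phi^{\prime\bot}}^2\in\DD_+$ with $\varepsilon\in\{\pm1,\pm\jj\}$ selected according to the type as in Definition \ref{Min_Surf_kind123-def-tl}, then $\varepsilon\,{\hat\Phi^{\prime\bot}}^2=k^2\big(\varepsilon\,{\Phi^{\prime\bot}}^2\big)\in\DD_+$, so the same $\varepsilon$ serves for $\hat\M$ and the type is preserved.

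Finally I would verify canonicity in the coordinate $s$ defined by $t=\frac{1}{\sqrt k}s$. This is a holomorphic change with $t'(s)=\frac{1}{\sqrt k}$, so $|t'|^2=\frac1k>0$ and, by \eqref{E_s-hol-tl} applied to $\hat\M$ (whose coefficient is $\hat E=k^2E$), the new coefficient equals $k\,E<0$; thus $s$ gives admissible isothermal coordinates. Applying \eqref{tildPhiPr2-tl} to the surface $\hat\M$ (with $\hat\Phi$ in the role of $\Phi$) and denoting by $\tilde\Phi$ the function of $\hat\M$ in the coordinate $s$, I would use $t'^{\,4}=\frac{1}{k^2}$, the homothety relation above, and the canonicity of $t$ on $\M$ (so ${\Phi^{\prime\bot}}^2=\varepsilon$ by Definition \ref{Can-def_R42-tl}) to obtain
\[
\left.\tilde\Phi_s^{\prime\bot}\right.^2=\left.\hat\Phi_t^{\prime\bot}\right.^2 t'^{\,4}=k^2\,{\Phi^{\prime\bot}}^2\,\frac{1}{k^2}={\Phi^{\prime\bot}}^2=\varepsilon.
\]
Since $\hat\M$ is of the same type as $\M$, this is exactly the condition of Definition \ref{Can-def_R42-tl} for $s$ to be canonical on $\hat\M$, which finishes the argument. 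The computation is essentially routine; the only points requiring care are the bookkeeping between the function of $\hat\M$ expressed in $t$ and the same function expressed in $s$, and the observation that the positive factor $k^2$ leaves the quadrant of ${\Phi^{\prime\bot}}^2$ in $\DD$ unchanged, which is precisely what makes the choice $t'=\frac{1}{\sqrt k}$ cancel the homothety factor.
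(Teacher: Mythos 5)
Your proof is correct and follows essentially the same route as the paper's own argument: establish $\left.\hat\Phi_t^{\prime\bot}\right.^2=k^2\,{\Phi_t^{\prime\bot}}^2=k^2\varepsilon$, conclude the type is preserved, and then cancel the factor $k^2$ via \eqref{tildPhiPr2-tl} using $t'^{\,4}=\frac{1}{k^2}$. Your additional checks (closure of $\DD_+$ under positive real scaling, admissibility of the new coordinate via $|t'|^2>0$) are sound elaborations of points the paper leaves implicit.
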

\begin{proof}
 Let the homothety be given by the equality $\hat\x=k\x$. Then we have:  $\hat\Phi=k\Phi$, 
$\hat\Phi'_t=k\Phi'_t$ and respectively $\left.\hat\Phi_t^{\prime\bot}\right.^2 = k^2 {\Phi_t^{\prime \bot}}^2$.
Since by default $t$ gives canonical coordinates on $\M$, then the last equality reduces to
$\left.\hat\Phi_t^{\prime\bot}\right.^2 = k^2\varepsilon$, where $\varepsilon=\pm1;\,\jj$ in dependance on 
the type of the surface. It follows from here that $\hat\M$ is of general type and is of the same type
as $\M$. On the other hand, the condition $t=\frac{1}{\sqrt{k}}s$ gives that $t'^{\,4}=\frac{1}{k^2}$.
Now, applying \eqref{tildPhiPr2-tl} to $\hat\M$, we get: 
\[
\left.\tilde{\hat\Phi}_s^{\prime\bot}\right.^2 = \left.\hat\Phi_t^{\prime\bot}\right.^2 t'^{\,4} =
k^2 \varepsilon \ds\frac{1}{k^2} = \varepsilon\,.
\]
The last equality means that, $s$ gives canonical coordinates on $\hat\M$. 
\end{proof}
\begin{remark}
The formula $\left.\hat\Phi_t^{\prime\bot}\right.^2 = k^2 {\Phi_t^{\prime \bot}}^2$,
obtained in the proof of the last theorem, shows that the set of degenerate points is invariant under a homothety
(more generally: similarity) in $\RR^4_2$.
\end{remark}

 If $\hat\M$ is obtained from $\M$ through a homothety of the type $\hat\x=k\x$, then
\eqref{hat_K_kappa-K_kappa-homotet_R42} implies that 
$\hat K=\frac{1}{k^2}K$ and $\hat\varkappa = \frac{1}{k^2}\varkappa$. 
According to Theorem \ref{Can_Sim-tl}, the canonical coordinates $s$ on $\hat\M$ are related to the canonical 
coordinates $t$ on $\M$ by the formula $t=\frac{1}{\sqrt{k}}s$. Combining the last formulas and using that
$K$ and $\varkappa$ are scalar invariants under any change of the local coordinates, we obtain:
\begin{equation}\label{hat_K_kappa-homotet_R42-tl}
\hat K(s) = \frac{1}{k^2}\; K\!\left(\frac{1}{\sqrt{k}}s\right); \qquad
\hat\varkappa(s) = \frac{1}{k^2}\;\varkappa\!\left(\frac{1}{\sqrt{k}}s\right).  
\end{equation}

\smallskip

 Next we consider canonical coordinates on a surface which is conjugate to a given minimal time-like surface. 
\begin{theorem}\label{Can_Conj_Min_Surf-tl}
Let $\M=(\D,\x)$ be a minimal time-like surface of general type in $\RR^4_2$ and let $t\in\D\subset\DD$ be
a variable giving canonical coordinates on $\M$. Further, let $\bar\M=(\D,\y)$ be the minimal time-like surface
conjugate to the surface $\M$. Then the surface $\bar\M$ is also of general type and is of the same type as $\M$.
The variable $s$ introduced by the formula $t=\jj s$, determines canonical coordinates on $\bar\M$.
\end{theorem}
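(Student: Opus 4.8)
The plan is to read off the behaviour of the invariant ${\Phi'^\bot}^2$ under passage to the conjugate surface, using the transformation law already established in the previous section. Recall from \eqref{hat_Phi_pn-Phi_pn-conj_R42-tl} that if $\hat\Phi$ denotes the $\Phi$-function of $\bar\M$ with respect to the variable $s$ (introduced by $t=\jj s$), then $\hat\Phi^{\prime\bot}(s) = \jj\,\Phi^{\prime\bot}(\jj s)$. The first step is simply to take the scalar square of this third equality. Since $\jj^2 = 1$, the factor $\jj$ disappears under squaring, and I obtain
\[
\left.\hat\Phi^{\prime\bot}\right.^2(s) = \jj^2\,\left.\Phi^{\prime\bot}\right.^2(\jj s) = \left.\Phi^{\prime\bot}\right.^2(\jj s)\,.
\]

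Next I would invoke the hypothesis that $t$ gives canonical coordinates on $\M$. By Definition \ref{Can-def_R42-tl} this means $\left.\Phi^{\prime\bot}\right.^2 \equiv \varepsilon$, where $\varepsilon = 1,\,-1,\,\jj$ according to whether $\M$ is of the first, the second, or the third type. Because this is a \emph{constant} function, evaluating it at the argument $\jj s$ leaves it unchanged, so the computation above collapses to
\[
\left.\hat\Phi^{\prime\bot}\right.^2(s) = \varepsilon\,.
\]

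From this single equality all three assertions of the theorem follow at once. First, since $\varepsilon \notin \DD_0$, the conjugate surface $\bar\M$ carries no degenerate points in the sense of Definition \ref{DegP-def-tl}, hence is of general type. Second, because the value $\varepsilon$ obtained for $\bar\M$ is literally the same number as for $\M$, Definition \ref{Min_Surf_kind123-def-tl} shows that $\bar\M$ is of the same type as $\M$. Third, the equality $\left.\hat\Phi^{\prime\bot}\right.^2 = \varepsilon$ is exactly the canonical condition \eqref{Can-Phi_R42-tl} for the variable $s$ on $\bar\M$, so $s$ is canonical there. The only point requiring any care is tracking the factor $\jj^2 = 1$ correctly; it is precisely this identity that makes the sign of $\varepsilon$ survive unchanged and forces all three conclusions to come out uniformly across the three types. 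There is no genuine analytic obstacle, as the entire argument reduces to the transformation formula \eqref{hat_Phi_pn-Phi_pn-conj_R42-tl} together with the canonical normalization.
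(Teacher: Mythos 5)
Your proposal is correct and follows essentially the same route as the paper's own proof: both square the third equality in \eqref{hat_Phi_pn-Phi_pn-conj_R42-tl}, use $\jj^2=1$ to obtain $\left.\hat\Phi^{\prime\bot}\right.^2\!(s) = {\Phi^{\prime\bot}}^2(\jj s) = \varepsilon$, and read off all three conclusions from Definitions \ref{DegP-def-tl}, \ref{Min_Surf_kind123-def-tl} and \ref{Can-def_R42-tl}. The only detail the paper adds is the preliminary remark (via \eqref{Phi_conj-tl}) that $s$ gives isothermal coordinates on $\bar\M$ with $\hat E(s)<0$, which is needed before the canonical condition can even be posed; this is implicit in your setup but worth stating.
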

\begin{proof}
 From the notes just before Definition \ref{Conj_Min_Surf-tl} of a conjugate minimal time-like surface and from 
equalities \eqref{Phi_conj-tl} we see that the variable $s$ gives isothermal coordinates on $\bar\M$, 
satisfying the condition $\hat E(s) < 0$\,. 
Squaring the last of the equalities \eqref{hat_Phi_pn-Phi_pn-conj_R42-tl}, we get:
\begin{equation}\label{hat_Phi_pn2-Phi_pn2-conj_R42-tl}
\left.\hat\Phi_s^{\prime\bot}\right.^2\!\! (s) = {\Phi_t^{\prime \bot}}^2\! (\jj s)\,.
\end{equation}
Since $t$ gives canonical coordinates on $\M$, then it follows from the last equality that 
$\left.\hat\Phi_s^{\prime\bot}\right.^2\!\! (s) = \varepsilon$\,, where $\varepsilon=\pm1;\,\jj$ 
depending on the type of the surface. This means that $\bar\M$ is also of general type and is of 
the same type as the surface $\M$. Furthermore, $s$ gives canonical coordinates on $\bar\M$. 
\end{proof}
\begin{remark}
The equality $\left.\hat\Phi_s^{\prime\bot}\right.^2\!\! (s) = {\Phi_t^{\prime \bot}}^2\! (\jj s)$,
shows that the set of degenerate points is invariant by taking the conjugate surface in the following sense:
If $p$ is a degenerate point on $\M$, then the point $\mathcal{F}(p)$, corresponding to $p$ in the standard
anti-isometry $\mathcal{F}$ between $\M$ and $\bar\M$, defined in Proposition \ref{AntiIsom_conj_M-M-tl}\,, 
is also a degenerate point on $\bar\M$.
\end{remark}
 The last theorem and formulas \eqref{hat_K_kappa-K_kappa-conj_R42-tl} give transformation formulas
for the curvatures $\hat K$ and $\hat\varkappa$ of the conjugate surface as functions of the canonical 
coordinates . These formulas are a special case of \eqref{hat_K_kappa-K_kappa-conj_R42-tl} and have the form:
\begin{equation}\label{hat_K_kappa-can-conj_R42-tl}
\hat K(s)=-K(\jj s)\,; \qquad \hat\varkappa(s) = -\varkappa(\jj s) \,.
\end{equation}

\smallskip

 Finally we show how to obtain canonical coordinates on the surfaces of the one-parameter family 
of minimal time-like surfaces, associated to a given one.
\begin{theorem}\label{Can_1-param_family-tl}
Let $\M=(\D,\x)$ be a minimal time-like surface of general type in $\RR^4_2$ and let $t\in\D\subset\DD$ be a
variable, giving canonical coordinates on $\M$. Further, denote by $\M_\theta=(\D,\x_\theta)$ the one-parameter 
family of minimal surfaces, associated to $\M$. Then, for every $\theta\in\RR$\,, $\M_\theta$ is of general type and 
is of the same type as $\M$. The variable $s$, determined by the equality $t=\e^{-\jj\frac{\theta}{2}} s$, 
gives canonical coordinates on $\M_\theta$.
\end{theorem}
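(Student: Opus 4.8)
The plan is to transport the whole statement down to the level of the function $\Phi$ and then to read off both the type of $\M_\theta$ and the canonical normalisation from the transformation rules already established in Sections \ref{sect_Phi_Psi_R42-tl}--\ref{sect_can-def_R42-tl}. The starting data are the identities from \eqref{Phi_1-param_family-tl} and the lines preceding \eqref{K_kappa-1-param_family_R42-tl}, namely $\Phi_\theta(t)=\e^{\jj\theta}\Phi(t)$ and, after orthogonal projection onto the normal space, $\Phi'^\bot_\theta(t)=\e^{\jj\theta}\Phi'^\bot(t)$. Since $t$ gives canonical coordinates on $\M$, Definition \ref{Can-def_R42-tl} yields ${\Phi'^\bot}^2=\varepsilon$ with $\varepsilon=\pm1;\,\jj$ according to the type of $\M$. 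Squaring the projection formula I would therefore obtain
\[
{\Phi'^\bot_\theta}^2 = \e^{2\jj\theta}\,{\Phi'^\bot}^2 = \varepsilon\,\e^{2\jj\theta}.
\]

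First I would settle the type of $\M_\theta$. Because $\e^{\jj\theta}\in\DD_+$, also $\e^{2\jj\theta}\in\DD_+$, and in each of the three cases $\varepsilon^2=1$ (as $(\pm1)^2=1$ and $\jj^2=1$); multiplying the displayed identity by $\varepsilon$ gives
\[
\varepsilon\,{\Phi'^\bot_\theta}^2 = \varepsilon^2\,\e^{2\jj\theta} = \e^{2\jj\theta}\in\DD_+.
\]
In particular ${\Phi'^\bot_\theta}^2\notin\DD_0$, so $\M_\theta$ is of general type, and the very same selector $\varepsilon$ places ${\Phi'^\bot_\theta}^2$ in the quadrant characterising the type of $\M$; hence, by Definition \ref{Min_Surf_kind123-def-tl}, $\M_\theta$ is of the same type as $\M$. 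This is the same quadrant-preservation mechanism used in the proof of Theorem \ref{kind123_Change_Move-tl}, now with the unit factor $\e^{2\jj\theta}$ in place of $t'^{\,4}$; equivalently one may note $\big|{\Phi'^\bot_\theta}^2\big|^2=\big|{\Phi'^\bot}^2\big|^2$ since $|\e^{2\jj\theta}|^2=1$.

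It remains to check that the variable $s$ with $t=\e^{-\jj\frac{\theta}{2}}s$ is canonical on $\M_\theta$. This is a holomorphic change with $t'(s)=\e^{-\jj\frac{\theta}{2}}$, so $|t'(s)|^2=1>0$; by \eqref{E_s-hol-tl} it is admissible and keeps $\tilde E(s)<0$. Applying the transformation law \eqref{tildPhiPr2-tl} to $\M_\theta$ and using $t'^{\,4}=\e^{-2\jj\theta}$, I would obtain
\[
\left.\tilde\Phi_{\theta,s}^{\prime\bot}\right.^2 = {\Phi'^\bot_\theta}^2\,t'^{\,4} = \varepsilon\,\e^{2\jj\theta}\,\e^{-2\jj\theta} = \varepsilon,
\]
which is exactly condition \eqref{Can-Phi_R42-tl} with the same $\varepsilon$; thus $s$ gives canonical coordinates on $\M_\theta$, which finishes the argument.

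I do not expect a genuine obstacle here: the statement is essentially bookkeeping of the double-number factors $\e^{\pm2\jj\theta}$ on top of the already-proved identity $\Phi'^\bot_\theta=\e^{\jj\theta}\Phi'^\bot$. The only delicate point is the uniform behaviour of the type under the unit rotation $\e^{2\jj\theta}$ — one must verify that $\varepsilon^2=1$ across all three types, so that multiplication by $\e^{2\jj\theta}$ neither leaves $\DD_+$ nor alters the quadrant — and, correspondingly, that the half-angle shift $t=\e^{-\jj\frac{\theta}{2}}s$ is chosen precisely so that $t'^{\,4}=\e^{-2\jj\theta}$ cancels the accumulated factor and restores ${\Phi'^\bot}^2=\varepsilon$.
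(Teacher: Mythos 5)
Your proposal is correct and follows essentially the same route as the paper: square the identity $\Phi'^\bot_\theta=\e^{\jj\theta}\Phi'^\bot$ to get ${\Phi'^\bot_\theta}^2=\e^{2\jj\theta}\varepsilon$, observe that $\e^{2\jj\theta}\in\DD_+$ preserves the general-type property and the type, and then cancel the factor via \eqref{tildPhiPr2-tl} using $t'^{\,4}=\e^{-2\jj\theta}$. Your explicit check that $\varepsilon^2=1$ in all three cases is a slightly more detailed spelling-out of the paper's appeal to $\e^{2\jj\theta}\in\DD_+$, but the argument is the same.
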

\begin{proof}
 We again apply formula \eqref{Phi_1-param_family-tl}, which gives 
$\Phi_\theta = \e^{\jj\theta}\Phi$. From here $\Phi'_{\theta|t} = \e^{\jj\theta}\Phi'_t$ and consequently 
$\left.\Phi_{\theta|t}^{\prime\bot}\right.^2 = \e^{2\jj\theta} {\Phi_t^{\prime \bot}}^2$.
Under the condition that $t$ gives canonical coordinates on $\M$, the last equality is reduced to
$\left.\Phi_{\theta|t}^{\prime\bot}\right.^2 = \e^{2\jj\theta}\varepsilon$, 
where $\varepsilon=\pm1;\,\jj$ depending on the type of the surface. 
It follows from here that $\M_\theta$ is also of general type and is of the same type as $\M$, since 
$\e^{2\jj\theta}\in\DD_+$. On the other hand, the condition $t=\e^{-\jj\frac{\theta}{2}} s$ gives 
$t'^{\,4}=\e^{-2\jj\theta}$. Now, applying \eqref{tildPhiPr2-tl} to $\M_\theta$, we get: 
\[
\left.\tilde\Phi_{\theta|s}^{\prime\bot}\right.^2 = \left.\Phi_{\theta|t}^{\prime\bot}\right.^2 t'^{\,4} =
\e^{2\jj\theta} \varepsilon \e^{-2\jj\theta} = \varepsilon\,.
\]
The last means that $s$ gives canonical coordinates on $\M_\theta$. 
\end{proof}
\begin{remark}
The formula $\left.\Phi_{\theta|t}^{\prime\bot}\right.^2 = \e^{2\jj\theta} {\Phi_t^{\prime \bot}}^2$,
obtained above shows that the set of degenerate points is invariant for the one-parameter family of 
associated minimal time-like surfaces in the following sense: If $p$ is a degenerate point on $\M$, 
then for any $\theta$, the point $\mathcal{F}_\theta(p)$, corresponding to $p$ under the standard 
isometry $\mathcal{F}_\theta$ between $\M$ and $\M_\theta$, is also a degenerate point on $\M_\theta$.
\end{remark}
 If $K_\theta$ and $\varkappa_\theta$ are the curvatures of the minimal time-like surface $\M_\theta$ 
associated to the given surface $\M$, then $K_\theta(t)=K(t)$ and $\varkappa_\theta(t) = \varkappa(t)$
according to \eqref{K_kappa-1-param_family_R42-tl}. Theorem \ref{Can_1-param_family-tl}\, gives that
the canonical coordinates $s$ on $\M_\theta$ and $t$ on $\M$ are related by the formula 
$t=\e^{-\jj\frac{\theta}{2}} s$. Combining the last formulas, we obtain:
\begin{equation}\label{K_kappa_1-param_family_can_R42-tl}
K_\theta(s) = K(\e^{-\jj\frac{\theta}{2}} s)\,; \qquad \varkappa_\theta(s) = \varkappa(\e^{-\jj\frac{\theta}{2}} s)\,.
\end{equation}

\medskip

 Let us return to the obtained formulas and properties in the cases of an anti-isometry and a conjugate surface.
Let $\M=(\D,\x(t))$ be a minimal time-like surface of general type in $\RR^4_2$ and let $\bar\M=(\D,\y(t))$ be 
its conjugate surface. Proposition \ref{AntiIsom_conj_M-M-tl} gives that the map $\x(t) \to y(t)$ is an
anti-isometry. Consider a third surface $\hat\M=(\D,\hat\x(t))$, obtained from $\bar\M$ through an anti-isometry
in $\RR^4_2$ of the type $\hat\x(t)=A\y(t)+b$, such that $\det A = 1$\,. Then the map $\x(t) \to \hat\x(t)$, as a
composition of two anti-isometries, is an isometry from $\M$ to $\hat\M$. But any isometry preserves the Gauss
curvature $K$. Further, formulas \eqref{hat_K_kappa-K_kappa-antimov_R42-tl}  and 
\eqref{hat_K_kappa-K_kappa-conj_R42-tl} imply that the curvature of the normal connection $\varkappa$ is also invariant.
On the other hand, formulas \eqref{hat_Phi_pn2-Phi_pn2-antimov_R42-tl} and \eqref{hat_Phi_pn2-Phi_pn2-conj_R42-tl} 
give that the function ${\Phi^{\prime \bot}}^2\! (t)$ changes the sign under this isometry.
The above observations give the following:
\begin{theorem}\label{Can_antimov_conj_R42-tl}
Let $\M=(\D,\x)$ be a minimal time-like surface of general type in $\RR^4_2$ parametrized by isothermal coordinates
$t\in\D\subset\DD$ and let the surface $\hat\M=(\D,\hat\x)$ be obtained from the conjugate surface $\bar\M=(\D,\y)$
of $\M$ through an anti-isometry in $\RR^4_2$ of the type:
\begin{equation}\label{hatM_M-antimov_conj_R42-tl}
\hat\x(t)=A\y(t)+b\,; \qquad\quad  A \in \mathbf{AO}(2,2,\RR)\,,\ \  \det A = 1\,, \ \  b \in \RR^4_2\,.
\end{equation}
Then the map $\x(t) \to \hat\x(t)$ is an isometry from $\M$ to $\hat\M$ and the following equalities hold:
\begin{equation}\label{E_K_kappa_Phi_pn2-antimov_conj_R42-tl}
\hat E(t)=E(t)\,; \qquad \hat K(t)=K(t)\,; \qquad \hat\varkappa(t) = \varkappa(t) \,; \qquad
\left.\hat\Phi^{\prime\bot}\right.^2\!\! (t) = - {\Phi^{\prime \bot}}^2\! (t)\,. 
\end{equation}

 If $\M$ is of the first or of the second type and $t$ determines canonical coordinates on $\M$, 
then $\hat\M$ is of the second or the first type, respectively and the coordinates determined by $t$ are also
canonical on $\hat\M$.

 If $\M$ is of the third type and $t$ determines canonical coordinates on $\M$, then $\hat\M$ is also of the third type
with canonical coordinates $s$ on $\hat\M$ given by the equality $t=\bar s$.
\end{theorem}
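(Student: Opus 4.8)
The plan is to reduce the entire statement to the single transformation law $\hat\Phi(t)=\jj A\Phi(t)$ and then to read off each conclusion from the formulas already established. First I would compute the function $\Phi$ of the conjugate surface $\bar\M=(\D,\y)$: since $\y$ is the harmonic conjugate of $\x$, we have $\y_u=\x_v$ and $\y_v=\x_u$, hence its associated function is $\y_u+\jj\y_v=\x_v+\jj\x_u=\jj\Phi$. By Proposition \ref{AntiIsom_conj_M-M-tl} the map $\x\to\y$ is an anti-isometry, and composing with the second anti-isometry $\hat\x=A\y+b$ (which extends $\DD$-linearly to $\DD^4_2$) yields $\hat\Phi=A(\jj\Phi)=\jj A\Phi$, the basic relation for everything that follows. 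Since a composition of two anti-isometries is an isometry, the map $\x\to\hat\x$ is an isometry of $\M$ onto $\hat\M$.

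From $\hat\Phi=\jj A\Phi$ the four equalities in \eqref{E_K_kappa_Phi_pn2-antimov_conj_R42-tl} follow by substitution into \eqref{EG-tl}, \eqref{K_kappa_Phi_R42-tl} and \eqref{Phi_prim_bot^2-Phi_prim^2-tl}. Using that $A$ is an anti-isometry, so $A\va\cdot A\vb=-\va\cdot\vb$, together with $\jj\bar\jj=-1$ and $\jj^2=1$, I would check $\|\hat\Phi\|^2=\|\Phi\|^2$, whence $\hat E=E$; the Gauss curvature is preserved because the map is an isometry (equivalently $\|\hat\Phi\wedge\hat\Phi'\|^2=\|\Phi\wedge\Phi'\|^2$), so $\hat K=K$; for the normal curvature the order-four determinant in \eqref{K_kappa_Phi_R42-tl} acquires the factor $\det A=1$ together with a product of four $\jj$-factors equal to $+1$, giving $\hat\varkappa=\varkappa$; and finally ${\hat\Phi^{\prime\bot}}^2=(\hat\Phi')^2=(\jj A\Phi')^2=-(\Phi')^2=-{\Phi^{\prime\bot}}^2$. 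The same four equalities may instead be assembled by composing the transformation laws \eqref{hat_K_kappa-K_kappa-conj_R42-tl}, \eqref{hat_K_kappa-K_kappa-antimov_R42-tl}, \eqref{hat_Phi_pn2-Phi_pn2-conj_R42-tl} and \eqref{hat_Phi_pn2-Phi_pn2-antimov_R42-tl}, as anticipated in the paragraph preceding the theorem.

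For the canonical-coordinate assertions I would feed the sign change ${\hat\Phi^{\prime\bot}}^2=-{\Phi^{\prime\bot}}^2$ into Definition \ref{Can-def_R42-tl}. If $t$ is canonical and $\M$ is of the first type, then ${\Phi^{\prime\bot}}^2=1$, so ${\hat\Phi^{\prime\bot}}^2=-1$, which is exactly the canonical condition for the second type; hence $\hat\M$ is of the second type and $t$ remains canonical on it. The second-to-first-type case is symmetric. In the third type ${\Phi^{\prime\bot}}^2=\jj$ gives ${\hat\Phi^{\prime\bot}}^2=-\jj$, so $\hat\M$ is again of the third type, but $t$ is not yet canonical; performing the reflection $t=\bar s$ and applying \eqref{tildPhiPr2-t_bs-tl} gives ${\tilde{\hat\Phi}_s^{\prime\bot}}^2=\overline{-\jj}=\jj$, so $s$ is canonical on $\hat\M$.

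The computations are routine once $\hat\Phi=\jj A\Phi$ is in hand; the only point demanding care is the bookkeeping of the $\jj$-factors and conjugations — specifically that $\jj\bar\jj=-1$, that the four $\jj$-factors in the Ricci determinant multiply to $+1$, and, above all, that in the third-type case the construction lands on $-\jj$ rather than $+\jj$. This last sign mismatch is precisely what forces the extra reflection $t=\bar s$ to restore the canonical normalization ${\hat\Phi^{\prime\bot}}^2=\jj$, and it is the one genuinely non-formal step of the argument.
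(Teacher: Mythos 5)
Your proposal is correct and follows essentially the same route as the paper: the paper obtains \eqref{E_K_kappa_Phi_pn2-antimov_conj_R42-tl} by composing the conjugate-surface laws \eqref{hat_K_kappa-K_kappa-conj_R42-tl}, \eqref{hat_Phi_pn2-Phi_pn2-conj_R42-tl} with the anti-isometry laws \eqref{hat_K_kappa-K_kappa-antimov_R42-tl}, \eqref{hat_Phi_pn2-Phi_pn2-antimov_R42-tl} (the two intermediate changes $t=\jj s$ cancelling), and then reads the type and canonicity statements off the sign change of ${\Phi^{\prime\bot}}^2$ exactly as you do. Your direct derivation of $\hat\Phi(t)=\jj A\Phi(t)$ merely inlines that composition, and all your sign checks ($\jj\bar\jj=-1$, the four $\jj$-factors in the determinant, and the $-\jj$ forcing the reflection $t=\bar s$ in the third-type case) are accurate.
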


 The last theorem shows that the surfaces of the first and the second type can not be distinguished only by the invariants
$K$ and $\varkappa$, as we noted immediately after Theorem~\ref{DegP_kind123-K_kappa-tl}\,.

%===================================================================================================================================

\section{Frenet type formulas and natural equations for a minimal time\,-like surface in $\RR^4_2$.}
\label{sect_nat_eq_R42-tl}

 Let $\M=(\D,\x)$ be a minimal time-like surface of general type in $\RR^4_2$ parametrized by canonical coordinates
$t=u+\jj v$. Consider the orthonormal pair of tangent vector fields $(\vX_1,\vX_2)$, oriented as the coordinate vector fields 
$(\x_u,\x_v)$. It is clear that the canonical coordinates determine a canonical tangent orthonormal frame field.
According to Theorem \ref{Can_Coord-uniq_R42-tl}\,, the canonical parametric lines are determined uniquely up to 
orientation, which implies that the tangent orthonormal frame field is determined uniquely up to orientation 
of the basic vector fields.

Our next goal is to obtain a canonical normal frame field, generated by the canonical coordinates.
In the cases of minimal space-like surfaces in $\RR^4$, $\RR^4_1$ and $\RR^4_2$, as well as in the case of minimal 
time-like surfaces in $\RR^4_1$, such a normal frame field is obtained taking a pair of normal vector fields
$(\n_1,\n_2)$, collinear with $\sigma (\x_u,\x_u)$ and $\sigma(\x_u,\x_v)$. This is possible because
$\sigma (\x_u,\x_u)\bot \: \sigma(\x_u,\x_v)$. The same property $\sigma (\x_u,\x_u)\bot \: \sigma(\x_u,\x_v)$ is 
present in the case of minimal time-like surfaces of the first or the second type in $\RR^4_2$, according to
Proposition \ref{Can-sigma_R42-tl}. Therefore one can apply the same method in the mentioned case. Such a method 
is applied in \cite{M-A-1}, where Frenet formulas for the basis $(\vX_1,\vX_2,\n_1,\n_2)$ are obtained.
This method is not applicable for the minimal time-like surfaces of the third type, since according to 
Proposition \ref{Can-sigma_R42-tl}\, $\sigma (\x_u,\x_u)\not\!\!\bot \: \sigma(\x_u,\x_v)$. In the present paper
we shall apply an alternative method to obtain a canonical isotropic basis of $N_p(\M)$ 
with the properties~\eqref{m12_psd_ort_R42-tl}.

 Let $(\m_1,\m_2)$ be a pair of isotropic normal vector fields, such that for every $p\in\M$,
$(\vX_1,\vX_2,\m_1,\m_2)$ is a right oriented basis of $\RR^4_2$\, and\, $\m_1\m_2 = \frac{1}{2}$.
Consider again the representation \eqref{Phi_pn_m12_R42-tl} of $\Phi'^\bot$. In the case of canonical coordinates,
the first formula in \eqref{Phi_pn2_m12_R42-tl} gives that: 
$c_1c_2={\Phi'^\bot}^2=\varepsilon$, where $\varepsilon=\pm1;\,\jj$ depending on the type of the surface.
It follows from here that $c_2=\varepsilon c_1^{-1}$. Using the exponential representation \eqref{t-exp} of the 
double numbers, we write $c_1$ in the form: $c_1 = \delta\rho\e^{\jj\frac{\varphi}{2}}$.
Then we have $c_2 = \varepsilon\delta\rho^{-1}\e^{-\jj\frac{\varphi}{2}}$.
Consequently formula \eqref{Phi_pn_m12_R42-tl} gets the form:
\begin{equation*}
\Phi^{\prime \bot} = 
\delta\rho\e^{\jj\frac{\varphi}{2}}\m_1 + \varepsilon\delta\rho^{-1}\e^{-\jj\frac{\varphi}{2}}\m_2\,; \qquad\quad 
\varepsilon=\pm 1;\jj\,, \quad \delta=\pm 1; \pm\jj\,, \quad  \rho>0\,, \quad  \varphi\in\RR\,.
\end{equation*}
Changing the basis $(\m_1,\m_2)$ with $(\rho\m_1,\rho^{-1}\m_2)$, then in the last formula we obtain $\rho=1$\,.
Furthermore we can choose the directions of the vectors $(\m_1,\m_2)$, in such a way that: $\delta= 1$ or $\delta= \jj$. 
So, with that choice of the pair $(\m_1,\m_2)$, the representation \eqref{Phi_pn_m12_R42-tl} of $\Phi'^\bot$ 
gets the form:
\begin{equation}\label{Phi_pn_m12_can_R42-tl}
\Phi'^\bot=
\delta\e^{\jj\frac{\varphi}{2}}\m_1 + \varepsilon\delta\e^{-\jj\frac{\varphi}{2}}\m_2\,; \qquad\quad 
\varepsilon=\pm 1;\jj\,, \quad \delta= 1; \jj\,, \quad  \varphi\in\RR\,.
\end{equation}

 Next we shall establish that $\varepsilon$, $\delta$ and $\varphi$, as well as the isotropic basis $(\m_1,\m_2)$,
are determined uniquely by \eqref{Phi_pn_m12_can_R42-tl}. For that purpose, suppose that $\hat\varepsilon$, 
$\hat\delta$, $\hat\varphi$ and $(\hat\m_1,\hat\m_2)$ is another set for which \eqref{Phi_pn_m12_can_R42-tl}
is valid. We know from \eqref{hat_m12_R42-tl} that $\hat\m_1 = k\m_1$ and $\hat\m_2 = k^{-1}\m_2$. Then it follows that: 
$\Phi'^\bot= \hat\delta k\e^{\jj\frac{\hat\varphi}{2}}\m_1 + \hat\varepsilon\hat\delta k^{-1}\e^{-\jj\frac{\hat\varphi}{2}}\m_2$.
Comparing the coefficients before $\m_1$ in the last formula and in \eqref{Phi_pn_m12_can_R42-tl}, we find
$\delta\e^{\jj\frac{\varphi}{2}}=\hat\delta k\e^{\jj\frac{\hat\varphi}{2}}$. As a corollary of the uniqueness 
of the exponential representation \eqref{t-exp}, we get $|k|=1$ and $\varphi=\hat\varphi$. 
Thus we have $\delta=k\hat\delta=\pm\hat\delta$. 
Taking into account the form of $\delta$ and $\hat\delta$ we conclude that the last equality is possible
only if $k=1$\,. Consequently $\delta=\hat\delta$, $\m_1=\hat\m_1$\, and\, $\m_2=\hat\m_2$. Comparing the 
coefficients before $\m_2$, we get $\varepsilon=\hat\varepsilon$. We combine the properties of the representation
\eqref{Phi_pn_m12_can_R42-tl} in the following statement:
\begin{prop}\label{m12_e_d_phi-unic_R42-tl}
 Let $\M=(\D,\x)$ be a minimal time-like surface of general type in $\RR^4_2$, parametrized by canonical coordinates
and let $(\vX_1,\vX_2)$ be the orthonormal vector fields, oriented as the coordinate vectors. Then there exists a
unique pair of isotropic normal vector fields $(\m_1,\m_2)$ and $\varepsilon$, $\delta$, $\varphi$,
such that $(\vX_1,\vX_2,\m_1,\m_2)$ is a right oriented quadruple in $\RR^4_2$, $\m_1\m_2 = \frac{1}{2}$ and 
$\Phi'^\bot$ satisfies \eqref{Phi_pn_m12_can_R42-tl}. The number $\varepsilon$ in \eqref{Phi_pn_m12_can_R42-tl} 
coincides with $\varepsilon$ from Definition \ref{Can-def_R42-tl} for canonical coordinates.\end{prop}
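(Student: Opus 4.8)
The plan is to split the statement into three parts: the existence of an isotropic frame and data $(\varepsilon,\delta,\varphi)$ realizing \eqref{Phi_pn_m12_can_R42-tl}, their uniqueness, and the identification of $\varepsilon$ with the invariant of Definition \ref{Can-def_R42-tl}. Most of the needed computations already appear in the discussion preceding the statement, so the work is mainly to organize them and isolate the one rigidity argument that does the real job.

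For existence I would start from an arbitrary admissible isotropic frame $(\m_1,\m_2)$, i.e. one for which $(\vX_1,\vX_2,\m_1,\m_2)$ is right oriented and $\m_1\m_2=\frac12$, and write $\Phi'^\bot=c_1\m_1+c_2\m_2$ as in \eqref{Phi_pn_m12_R42-tl}. The canonical condition ${\Phi'^\bot}^2=\varepsilon$ together with the first equality in \eqref{Phi_pn2_m12_R42-tl} gives $c_1c_2=\varepsilon$, hence $c_2=\varepsilon c_1^{-1}$. Writing $c_1=\delta\rho\e^{\jj\frac{\varphi}{2}}$ in the exponential form \eqref{t-exp} and using that $\delta^{-1}=\delta$ for each $\delta=\pm1;\pm\jj$, I obtain $c_2=\varepsilon\delta\rho^{-1}\e^{-\jj\frac{\varphi}{2}}$. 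Rescaling the frame by $(\m_1,\m_2)\mapsto(\rho\m_1,\rho^{-1}\m_2)$, which is admissible by \eqref{hat_m12_R42-tl}, normalizes $\rho$ to $1$; the residual sign freedom ($k=-1$ in \eqref{hat_m12_R42-tl}) flips the sign of $c_1$ and hence of $\delta$, so $\delta$ can be reduced to either $1$ or $\jj$. This produces \eqref{Phi_pn_m12_can_R42-tl}.

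For uniqueness I would take a second admissible set $\hat\varepsilon,\hat\delta,\hat\varphi,(\hat\m_1,\hat\m_2)$ satisfying \eqref{Phi_pn_m12_can_R42-tl}; by \eqref{hat_m12_R42-tl} the two frames are related by $\hat\m_1=k\m_1$, $\hat\m_2=k^{-1}\m_2$ with $k\in\RR$, $k\neq0$. Substituting and matching the coefficient of $\m_1$ gives $\delta\e^{\jj\frac{\varphi}{2}}=\hat\delta k\e^{\jj\frac{\hat\varphi}{2}}$, and uniqueness of the exponential representation \eqref{t-exp} forces $|k|=1$ and $\varphi=\hat\varphi$, so $\delta=\pm\hat\delta$. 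Since both $\delta$ and $\hat\delta$ lie in the two-element set $\{1,\jj\}$, the equality $\delta=\pm\hat\delta$ can hold only for $k=1$, whence $\delta=\hat\delta$, $\m_1=\hat\m_1$, $\m_2=\hat\m_2$, and matching the $\m_2$-coefficient then yields $\varepsilon=\hat\varepsilon$. The identification of $\varepsilon$ is immediate: since $\delta^2=1$ for both admissible values of $\delta$, the product of the two coefficients in \eqref{Phi_pn_m12_can_R42-tl} returns $c_1c_2={\Phi'^\bot}^2=\varepsilon\delta^2=\varepsilon$, which is exactly the $\varepsilon$ of Definition \ref{Can-def_R42-tl}.

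The only genuinely delicate point, and the one I expect to be the main obstacle, is the reduction of $\delta$ to $\{1,\jj\}$ together with the rigidity that follows. Once $\rho$ is absorbed, the real rescaling $k$ in \eqref{hat_m12_R42-tl} acts on the $\m_1$-coefficient only by sign, so it can collapse $\{1,-1\}$ to $1$ and $\{\jj,-\jj\}$ to $\jj$ but can never interchange these two families; verifying that this leaves no further normalization available, and that it therefore pins down the frame and the data completely, is the crux. By contrast, the coefficient comparisons themselves are routine consequences of the uniqueness of the exponential form \eqref{t-exp}.
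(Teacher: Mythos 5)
Your proposal is correct and follows essentially the same route as the paper: existence by writing $\Phi'^\bot=c_1\m_1+c_2\m_2$, using $c_1c_2={\Phi'^\bot}^2=\varepsilon$ and the exponential form \eqref{t-exp} of $c_1$, then absorbing $\rho$ and the sign of $\delta$ by the admissible rescalings \eqref{hat_m12_R42-tl}; uniqueness by comparing coefficients under $\hat\m_1=k\m_1$, $\hat\m_2=k^{-1}\m_2$ and invoking the uniqueness of the exponential representation to force $k=1$. The explicit check $c_1c_2=\varepsilon\delta^2=\varepsilon$ identifying $\varepsilon$ with the constant of Definition \ref{Can-def_R42-tl} is a small, correct addition to what the paper records in the discussion preceding the statement.
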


 The above proposition shows that the canonical coordinates of any minimal time-like surface of general type in $\RR^4_2$
generate also a canonical normal isotropic basis of the corresponding normal space. On the other hand, any isotropic 
basis generates an orthonormal basis according to formulas \eqref{n12_m12_R42-tl}. Therefore the canonical 
coordinates generate a canonical orthonormal frame field $(\vX_1,\vX_2,\n_1,\n_2)$ in $\RR^4_2$. It is easy to see 
that in the case of surfaces of the first or the second type,
this frame field coincides with the frame field, obtained in the way described at the beginning of this section. 
More precisely, in the case of surfaces of the first or the second type, thus obtained vectors $\n_1$ and $\n_2$, are
collinear with $\sigma(\x_u,\x_u)$ and $\sigma(\x_u,\x_v)$. For example, let $\varepsilon=1$ and $\delta=1$\,.
Then we have from \eqref{Phi_pn_m12_can_R42-tl} and \eqref{m12_n12_R42-tl}:
\begin{equation*}
\Phi'^\bot=
\e^{\jj\frac{\varphi}{2}}\m_1 + \e^{-\jj\frac{\varphi}{2}}\m_2=
\e^{\jj\frac{\varphi}{2}}\frac{\n_1 + \n_2}{2} + \e^{-\jj\frac{\varphi}{2}}\frac{-\n_1 + \n_2}{2}
=\jj\sinh\!\frac{\varphi}{2}\,\n_1 + \cosh\!\frac{\varphi}{2}\,\n_2 \,.
\end{equation*}
From here and \eqref{PhiPr-tl} it follows that:
\begin{equation*}
\sigma(\x_u,\x_v) = \sinh\!\frac{\varphi}{2}\,\n_1\,; \qquad \sigma(\x_u,\x_u) = \cosh\!\frac{\varphi}{2}\,\n_2 \,.
\end{equation*}
In analogous way, it can be seen that for the other values of $\varepsilon$ and $\delta$ similar equalities are valid.
As we noted at the beginning of the section, in the case of surfaces of the third type, the formulas are different.
For example, if $\varepsilon=\jj$ and $\delta=1$, after some calculations it follows that:
\begin{equation*}
\sigma(\x_u,\x_u) =  \frac{1}{2}\e^{ \frac{\varphi}{2}}\n_1 + \frac{1}{2}\e^{-\frac{\varphi}{2}}\n_2 \,; \qquad 
\sigma(\x_u,\x_v) = -\frac{1}{2}\e^{-\frac{\varphi}{2}}\n_1 + \frac{1}{2}\e^{ \frac{\varphi}{2}}\n_2 \,.
\end{equation*}

 Further in this work, we shall use the isotropic basis $(\m_1,\m_2)$ and formula \eqref{Phi_pn_m12_can_R42-tl}, 
in order to uniform the formulas in all three types of surfaces. Furthermore, as in the case of space-like surfaces, 
we shall use in the tangent space the complex $\DD^4_2$-valued vectors $\Phi$ and $\bar\Phi$ instead of the vectors
$\vX_1$ and $\vX_2$. The equality $\Phi^2=0$ means that $\Phi$ and $\bar\Phi$ are isotropic with respect to
the bilinear product in $\DD^4_2$. Consequently, the quadruple $(\Phi,\bar\Phi,\m_1,\m_2)$ is an isotropic frame field
in $\DD^4_2$. This frame field will be used as the Frenet frame field and the next our goal is to 
obtain Frenet type formulas for this frame field.

\smallskip

 Since $\Phi$ is a holomorphic (over $\DD$) function, the first formula is simply $\frac{\partial\Phi}{\partial \bar t}=0$\,. 
The tangent projection of $\frac{\partial\Phi}{\partial t}$ is given by \eqref{Phi_p-Tang-tl}, while \eqref{Phi_pn_m12_can_R42-tl}
gives its normal projection. Thus we have:
\begin{equation*}
\frac{\partial\Phi}{\partial t} =
\frac{\partial\ln |E|}{\partial t}\Phi + \delta\e^{\jj\frac{\varphi}{2}}\m_1 + \varepsilon\delta\e^{-\jj\frac{\varphi}{2}}\m_2\,.
\end{equation*}
The derivative of the vector field $\m_1$ is represented as follows:
\begin{equation*}
\frac{\partial\m_1}{\partial t} =  
\frac{\frac{\partial\m_1}{\partial t}\bar\Phi}{\|\Phi\|^2}\Phi + 
\frac{\frac{\partial\m_1}{\partial t}\Phi}{\|\Phi\|^2}\bar\Phi + 
2\big( \textstyle\frac{\partial\m_1}{\partial t}\m_2 \big)\m_1 + 
2\big( \textstyle\frac{\partial\m_1}{\partial t}\m_1 \big)\m_2 \,. 
\end{equation*}
We find the coefficients before $\Phi$ and $\bar\Phi$: 
\[
\frac{\partial\m_1}{\partial t}\bar\Phi=-\m_1\frac{\partial\bar\Phi}{\partial t}=0\,, \quad
\frac{\partial\m_1}{\partial t}\Phi=-\m_1\frac{\partial\Phi}{\partial t}=-\frac{1}{2}\varepsilon\delta\e^{-\jj\frac{\varphi}{2}}
\text{\ \ \ и\ \ \ } \|\Phi\|^2=2E\,. 
\]

It follows from the equality $\m_1^2=0$ that $\frac{\partial\m_1}{\partial t}\m_1=0$, and denoting by $\beta$ the coefficient
before $\m_1$, we have:
\begin{equation*}
\frac{\partial\m_1}{\partial t} = -\frac{\varepsilon\delta\e^{-\jj\frac{\varphi}{2}}}{4E}\bar\Phi + \beta\m_1 \,. 
\end{equation*}

In a similar way we obtain the corresponding formula for $\frac{\partial\m_2}{\partial t}$.
In view of the equality $\m_1\m_2=\frac{1}{2}$ it follows that 
$\frac{\partial\m_1}{\partial t}\m_2+\m_1\frac{\partial\m_2}{\partial t}=0$\,,
which gives that the coefficient before $\m_2$ is equal to $-\beta$. Thus we have:
\begin{equation*}
\frac{\partial\m_2}{\partial t} = -\frac{\delta\e^{\jj\frac{\varphi}{2}}}{4E}\bar\Phi - \beta\m_2 \,. 
\end{equation*}

 So far we have found the formulas for four of  the derivatives: 
$\frac{\partial\Phi}{\partial \bar t}$, $\frac{\partial\Phi}{\partial t}$, $\frac{\partial\m_1}{\partial t}$ and 
$\frac{\partial\m_2}{\partial t}$. The formulas for
$\frac{\partial\bar\Phi}{\partial t}$, $\frac{\partial\bar\Phi}{\partial\bar t}$, $\frac{\partial\m_1}{\partial\bar t}$ 
and 
$\frac{\partial\m_2}{\partial\bar t}$
are obtained from the first ones applying complex conjugation. Therefore they do not give new information.
Summarizing, we have the following Frenet type formulas for the frame field $(\Phi,\bar\Phi,\m_1,\m_2)$\,:

\begin{equation}\label{Frene_Phi_bar_Phi_m1_m2_R42-tl}
\begin{array}{llcrrr}
 \ds\frac{\partial\Phi}{\partial \bar t} &=&  0\,;                                     & &            & \\[2ex]
 \ds\frac{\partial\Phi}{\partial t}      &=&  \ds\frac{\partial\ln |E|}{\partial t}\Phi  & &
+\quad \delta\e^{\jj\frac{\varphi}{2}}\m_1\phantom{\,;} &\ +\ \ \ \ \,\varepsilon\delta\e^{-\jj\frac{\varphi}{2}}\m_2\,;\\[2ex]
 \ds\frac{\partial\m_1}{\partial t}      &=&            &\ -\ \ \ \ds\frac{\varepsilon\delta\e^{-\jj\frac{\varphi}{2}}}{4E}\bar\Phi&
 \quad + \qquad\, \beta\m_1\,; &\\[2ex]
 \ds\frac{\partial\m_2}{\partial t}      &=&            & -\quad\ \ \ \ds\frac{\delta\e^{\jj\frac{\varphi}{2}}}{4E}\bar\Phi  &
 & -\qquad \quad \ \ \beta\m_2\,.
\end{array}
\end{equation}

\bigskip
 Considering the last equations as a system of PDE's, solved with respect to the derivatives, we shall find 
the integrability conditions for this system:
\[
\frac{\partial^2\Phi}{\partial \bar t \partial t}=\frac{\partial^2\Phi}{\partial t \partial\bar t}\:; \qquad
\frac{\partial^2\m_1}{\partial \bar t \partial t}=\frac{\partial^2\m_1}{\partial t \partial\bar t}\:; \qquad
\frac{\partial^2\m_2}{\partial \bar t \partial t}=\frac{\partial^2\m_2}{\partial t \partial\bar t}\:.
\]

 The first equality gives 
$\frac{\partial^2\Phi}{\partial t \partial\bar t}=0$.
Then the equality of the mixed partial derivatives of $\Phi$ implies:

\[
\begin{array}{lll}
0=\ds\frac{\partial^2\Phi}{\partial \bar t \partial t} &=&
      \ds\frac{\partial^2\ln |E|}{\partial\bar t \partial t}\Phi + 
      \ds\frac{\partial\ln |E|}{\partial t}\ds\frac{\partial\Phi}{\partial\bar t} + 
      \ds\frac{\partial(\delta\e^{\jj\frac{\varphi}{2}}\m_1)}{\partial\bar t} + 
			\ds\frac{\partial(\varepsilon\delta\e^{-\jj\frac{\varphi}{2}}\m_2)}{\partial\bar t}\\[2.0ex]
 				 &=&\ds\frac{\Delta^h\ln |E|}{4}\Phi+
				\ds\frac{\partial(\delta\e^{\jj\frac{\varphi}{2}})}{\partial\bar t}\m_1+
				 \delta\e^{\jj\frac{\varphi}{2}}\ds\frac{\partial\m_1}{\partial\bar t} + 
				 \ds\frac{\partial(\varepsilon\delta\e^{-\jj\frac{\varphi}{2}})}{\partial\bar t}\m_2+
				  \varepsilon\delta\e^{-\jj\frac{\varphi}{2}}\ds\frac{\partial\m_2}{\partial\bar t}\\[2.0ex]
				 &=&\left(\ds\frac{\Delta^h\ln |E|}{4}-\ds\frac{\bar\varepsilon|\delta|^2\e^{\jj\varphi}}{4E}-
				\ds\frac{\varepsilon|\delta|^2\e^{-\jj\varphi}}{4E}\right)\Phi\\[2.0ex]
&+&\left(\ds\frac{\partial(\delta\e^{\jj\frac{\varphi}{2}})}{\partial\bar t}+\delta\e^{\jj\frac{\varphi}{2}}\bar\beta\right)\m_1+
	 \left(\ds\frac{\partial(\varepsilon\delta\e^{-\jj\frac{\varphi}{2}})}{\partial\bar t} -
	       \varepsilon\delta\e^{-\jj\frac{\varphi}{2}}\bar\beta\right)\m_2\,.
\end{array}
\]

 Setting equal to zero the coefficient before $\Phi$, and applying the equality: 
\[
\bar\varepsilon|\delta|^2\e^{\jj\varphi} + \varepsilon|\delta|^2\e^{-\jj\varphi}
= 2\Re\big(\bar\varepsilon|\delta|^2\e^{\jj\varphi}\big)\,,
\]
we get:
\begin{equation}\label{Nat_Eq_Gauss_E_phi_R42-tl}
\Delta^h\ln |E| - \frac{2\Re\big(\bar\varepsilon|\delta|^2\e^{\jj\varphi}\big)}{E}=0\,.
\end{equation} 
Taking into account 
\eqref{K_kappa_c12_R42-tl}, we see that the equation \eqref{Nat_Eq_Gauss_E_phi_R42-tl} is in essence
 the classical Gauss equation \eqref{Delta_lnE_2K-tl} for a minimal time-like surface in $\RR^4_2$, 
parametrized by canonical coordinates.

 Setting equal to zero the coefficient before $\m_1$ in the equality $\frac{\partial^2\Phi}{\partial\bar t \partial t}=0$\,, 
we find:
\[
0 = \frac{\partial(\delta\e^{\jj\frac{\varphi}{2}})}{\partial\bar t}+\delta\e^{\jj\frac{\varphi}{2}}\bar\beta = 
\delta\e^{\jj\frac{\varphi}{2}} \frac{\jj}{2} \frac{\partial\varphi}{\partial\bar t}+\delta\e^{\jj\frac{\varphi}{2}}\bar\beta\,.
\]
The last equality is equivalent to:
\begin{equation}\label{Nat_Eq_Codazzi_beta_E_phi_R42-tl}
\beta = \frac{\jj}{2} \frac{\partial\varphi}{\partial t}\:.
\end{equation}
Setting equal to zero the coefficient before $\m_2$ in the equality $\frac{\partial^2\Phi}{\partial\bar t \partial t}=0$\,, 
we again obtain the equality \eqref{Nat_Eq_Codazzi_beta_E_phi_R42-tl}. Therefore, the classical Codazzi equations 
for a minimal time-like surface in $\RR^4_2$, parametrized by canonical coordinates, are reduced to 
\eqref{Nat_Eq_Codazzi_beta_E_phi_R42-tl}.

\smallskip

 Further we turn to the equality of the mixed partial derivatives of the vector fields $\m_1$ and $\m_2$.
First we consider their projections into the tangent space  of the given surface:
\[
\begin{array}{lll}
 \left(\ds\frac{\partial^2\m_1}{\partial \bar t \partial t}\right)^\top &=&
    -\ds\frac{\partial}{\partial \bar t}\left(\frac{\varepsilon\delta\e^{-\jj\frac{\varphi}{2}}}{4E}\right) \bar\Phi -
     \ds\frac{\varepsilon\delta\e^{-\jj\frac{\varphi}{2}}}{4E} \left(\frac{\partial \bar\Phi}{\partial \bar t}\right)^\top+
		 \beta \left(\frac{\partial \m_1}{\partial \bar t}\right)^\top\\[2.0ex]
&=& -\ds\frac{\partial(\varepsilon\delta\e^{-\jj\frac{\varphi}{2}})}{\partial\bar t} \frac{1}{4E} \bar\Phi +
     \varepsilon\delta\e^{-\jj\frac{\varphi}{2}} \frac{1}{4E^2} \frac{\partial E}{\partial\bar t} \bar\Phi -
		 \ds\frac{\varepsilon\delta\e^{-\jj\frac{\varphi}{2}}}{4E}  \ds\frac{\partial\ln |E|}{\partial\bar t}\bar\Phi -
		 \beta  \ds\frac{\bar\varepsilon\bar\delta\e^{\jj\frac{\varphi}{2}}}{4E} \Phi \\[2.0ex]
&=&  -\ds\frac{\partial(\varepsilon\delta\e^{-\jj\frac{\varphi}{2}})}{\partial\bar t} \frac{1}{4E} \bar\Phi -
   	 \beta  \ds\frac{\bar\varepsilon\bar\delta\e^{\jj\frac{\varphi}{2}}}{4E} \Phi \,.
\end{array}
\]

\[
\begin{array}{lll}
 \left(\ds\frac{\partial^2\m_1}{\partial t \partial\bar t}\right)^\top &=&
    -\ds\frac{\partial}{\partial t}\left(\frac{\bar\varepsilon\bar\delta\e^{\jj\frac{\varphi}{2}}}{4E}\right) \Phi -
     \ds\frac{\bar\varepsilon\bar\delta\e^{\jj\frac{\varphi}{2}}}{4E} \left(\frac{\partial \Phi}{\partial t}\right)^\top+
		 \bar\beta \left(\frac{\partial \m_1}{\partial t}\right)^\top\\[2.0ex]
&=& -\ds\frac{\partial(\bar\varepsilon\bar\delta\e^{\jj\frac{\varphi}{2}})}{\partial t} \frac{1}{4E} \Phi +
     \bar\varepsilon\bar\delta\e^{\jj\frac{\varphi}{2}} \frac{1}{4E^2} \frac{\partial E}{\partial t} \Phi -
		 \ds\frac{\bar\varepsilon\bar\delta\e^{\jj\frac{\varphi}{2}}}{4E}  \ds\frac{\partial\ln |E|}{\partial t} \Phi -
		 \bar\beta  \ds\frac{\varepsilon\delta\e^{-\jj\frac{\varphi}{2}}}{4E} \bar\Phi \\[2.0ex]
&=&  -\ds\frac{\partial(\bar\varepsilon\bar\delta\e^{\jj\frac{\varphi}{2}})}{\partial t} \frac{1}{4E} \Phi -
   	 \bar\beta  \ds\frac{\varepsilon\delta\e^{-\jj\frac{\varphi}{2}}}{4E} \bar\Phi \,.
\end{array}
\]

 Equating the coefficients before $\Phi$ and $\bar\Phi$ in the last equalities, we obtain again 
\eqref{Nat_Eq_Codazzi_beta_E_phi_R42-tl}. In a similar way we find the tangential projections of the mixed 
derivatives of the vector field $\m_2$. Equating these projections, we obtain again equality
\eqref{Nat_Eq_Codazzi_beta_E_phi_R42-tl}.

\smallskip

 Finally we consider the normal projections of the mixed partial derivatives of $\m_1$ and $\m_2$. 
For the vector field $\m_1$ we find:
\[
\begin{array}{lll}
 \left(\ds\frac{\partial^2\m_1}{\partial \bar t \partial t}\right)^\bot &=&
    -\ds\frac{\varepsilon\delta\e^{-\jj\frac{\varphi}{2}}}{4E} \left(\ds\frac{\partial \bar\Phi}{\partial \bar t}\right)^\bot +
     \ds\frac{\partial \beta}{\partial \bar t}\m_1 + \beta \left(\ds\frac{\partial \m_1}{\partial \bar t}\right)^\bot \\[2.0ex]
&=&  -\ds\frac{\varepsilon\delta\e^{-\jj\frac{\varphi}{2}}}{4E}
      \left(\bar\delta\e^{-\jj\frac{\varphi}{2}}\m_1 + \bar\varepsilon\bar\delta\e^{\jj\frac{\varphi}{2}}\m_2 \right) + 
      \ds\frac{\partial \beta}{\partial \bar t}\m_1 + \beta \bar\beta \m_1 \\[2.0ex]
&=&  \phantom{-}\left(-\ds\frac{\varepsilon|\delta|^2\e^{-\jj\varphi}}{4E} + \ds\frac{\partial \beta}{\partial \bar t} + 
     |\beta|^2\right)\m_1 -\ds\frac{|\varepsilon|^2|\delta|^2}{4E} \m_2 \,.
\end{array}
\]
In a similar way we have:
\[
\begin{array}{lll}
 \left(\ds\frac{\partial^2\m_1}{\partial t \partial \bar t}\right)^\bot &=&
    -\ds\frac{\bar\varepsilon\bar\delta\e^{\jj\frac{\varphi}{2}}}{4E} \left(\ds\frac{\partial \Phi}{\partial t}\right)^\bot +
     \ds\frac{\partial \bar\beta}{\partial t}\m_1 + \bar\beta \left(\ds\frac{\partial \m_1}{\partial t}\right)^\bot \\[2.0ex]
&=&  -\ds\frac{\bar\varepsilon\bar\delta\e^{\jj\frac{\varphi}{2}}}{4E}
      \left(\delta\e^{\jj\frac{\varphi}{2}}\m_1 + \varepsilon\delta\e^{-\jj\frac{\varphi}{2}}\m_2 \right) + 
      \ds\frac{\partial \bar\beta}{\partial t}\m_1 + \bar\beta \beta \m_1 \\[2.0ex]
&=&  \phantom{-}\left(-\ds\frac{\bar\varepsilon|\delta|^2\e^{\jj\varphi}}{4E} + \ds\frac{\partial \bar\beta}{\partial t} + 
     |\beta|^2\right)\m_1 -\ds\frac{|\varepsilon|^2|\delta|^2}{4E} \m_2 \,.
\end{array}
\]
The last formulas show that the coefficients before $\m_2$ coincide. Equating the coefficients before $\m_1$, 
we obtain an equation, which is the Ricci equation for a minimal time-like surface in $\RR^4_2$, parametrized 
by canonical coordinates:
\[
 -\ds\frac{\varepsilon|\delta|^2\e^{-\jj\varphi}}{4E}    + \ds\frac{\partial \beta}{\partial \bar t} + |\beta|^2 = 
 -\ds\frac{\bar\varepsilon|\delta|^2\e^{\jj\varphi}}{4E} + \ds\frac{\partial \bar\beta}{\partial t}  + |\beta|^2 .
\]
The last equality is equivalent to:
\[
\ds\frac{\partial \beta}{\partial \bar t} - \ds\frac{\partial \bar \beta}{\partial t} = 
-\ds\frac{\bar\varepsilon|\delta|^2\e^{\jj\varphi}}{4E} + \ds\frac{\varepsilon|\delta|^2\e^{-\jj\varphi}}{4E}
\quad \Leftrightarrow \quad
2 \jj \Im \ds\frac{\partial \beta}{\partial \bar t} = -\frac{2\jj\Im\big(\bar\varepsilon|\delta|^2\e^{\jj\varphi}\big)}{4E} \,.
\]
Finally we have:
\begin{equation}\label{Nat_Eq_Ricci_beta_E_phi_R42-tl}
\Im \ds\frac{\partial \beta}{\partial \bar t} = -\frac{\Im\big(\bar\varepsilon|\delta|^2\e^{\jj\varphi}\big)}{4E} \:.
\end{equation}
Equating the normal projections of the mixed partial derivatives of the second vector field $\m_2$, we obtain 
no new equations. 

Summarizing we have: 
\begin{prop}\label{Frene_integr_cond_R42-tl}
The integrability conditions of the system \eqref{Frene_Phi_bar_Phi_m1_m2_R42-tl} are given by the equalities 
\eqref{Nat_Eq_Gauss_E_phi_R42-tl}, \eqref{Nat_Eq_Codazzi_beta_E_phi_R42-tl} and \eqref{Nat_Eq_Ricci_beta_E_phi_R42-tl}. %\prfend
\end{prop}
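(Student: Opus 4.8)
The plan is to regard the Frenet type equations \eqref{Frene_Phi_bar_Phi_m1_m2_R42-tl} (together with their complex conjugates, which supply the derivatives of $\bar\Phi$, $\m_1$ and $\m_2$ with respect to $\bar t$) as an overdetermined linear first-order system of PDE's for the moving frame $(\Phi,\bar\Phi,\m_1,\m_2)$. Since $\Phi^2=0$ forces $\Phi$ and $\bar\Phi$ to be isotropic and, by \eqref{m12_psd_ort_R42-tl}, the pair $\m_1,\m_2$ completes them to a $\DD$-basis of $\DD^4_2$ at every point, the only obstruction to local solvability is the commutation of the mixed second derivatives, $\frac{\partial^2}{\partial\bar t\partial t}=\frac{\partial^2}{\partial t\partial\bar t}$, imposed on each frame field. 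Because the $\bar t$-derivatives of $\bar\Phi$, $\m_1$, $\m_2$ are obtained from \eqref{Frene_Phi_bar_Phi_m1_m2_R42-tl} by conjugation, it suffices to impose this commutation on the three fields $\Phi$, $\m_1$ and $\m_2$, expand each resulting identity in the basis $(\Phi,\bar\Phi,\m_1,\m_2)$, and use linear independence to equate the scalar coefficients one frame vector at a time.

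First I would treat $\Phi$. As $\Phi$ is holomorphic, $\frac{\partial^2\Phi}{\partial t\partial\bar t}=0$, so the condition reduces to $\frac{\partial^2\Phi}{\partial\bar t\partial t}=0$; differentiating the second line of \eqref{Frene_Phi_bar_Phi_m1_m2_R42-tl} in $\bar t$ and inserting the conjugated expressions for $\frac{\partial\m_1}{\partial\bar t}$ and $\frac{\partial\m_2}{\partial\bar t}$ produces a combination of $\Phi$, $\m_1$ and $\m_2$. Vanishing of the $\Phi$-coefficient, after using $\bar\varepsilon|\delta|^2\e^{\jj\varphi}+\varepsilon|\delta|^2\e^{-\jj\varphi}=2\Re(\bar\varepsilon|\delta|^2\e^{\jj\varphi})$, is exactly the Gauss equation \eqref{Nat_Eq_Gauss_E_phi_R42-tl}; vanishing of the $\m_1$-coefficient and, independently, of the $\m_2$-coefficient both collapse, via $\frac{\partial}{\partial\bar t}\e^{\jj\varphi/2}=\frac{\jj}{2}\e^{\jj\varphi/2}\frac{\partial\varphi}{\partial\bar t}$, to the single Codazzi relation \eqref{Nat_Eq_Codazzi_beta_E_phi_R42-tl}.

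Next I would run the same comparison for $\m_1$ and $\m_2$, splitting each mixed derivative into its tangential and normal parts. Matching the coefficients of $\Phi$ and $\bar\Phi$ in the tangential projections reproduces \eqref{Nat_Eq_Codazzi_beta_E_phi_R42-tl} and hence adds nothing new. The new information lives in the normal projections: for $\m_1$ the $\m_2$-coefficients of the two mixed derivatives coincide automatically (both equal $-\frac{|\varepsilon|^2|\delta|^2}{4E}$), while equating the $\m_1$-coefficients and isolating the imaginary part yields the Ricci equation \eqref{Nat_Eq_Ricci_beta_E_phi_R42-tl}; the field $\m_2$ then contributes only repetitions of these identities. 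I expect the main difficulty to be organizational rather than conceptual: one must track a substantial amount of bookkeeping and, above all, verify that the many coefficient equations coming from the three fields are mutually consistent and collapse to exactly the three listed relations. Confirming these redundancies — the recurring appearance of the Codazzi relation and the automatic matching of the normal $\m_2$-coefficient — is precisely what establishes that \eqref{Nat_Eq_Gauss_E_phi_R42-tl}, \eqref{Nat_Eq_Codazzi_beta_E_phi_R42-tl} and \eqref{Nat_Eq_Ricci_beta_E_phi_R42-tl} form the complete set of integrability conditions.
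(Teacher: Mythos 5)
Your proposal follows exactly the paper's own route: impose equality of the mixed partial derivatives $\frac{\partial^2}{\partial\bar t\,\partial t}=\frac{\partial^2}{\partial t\,\partial\bar t}$ on $\Phi$, $\m_1$, $\m_2$ (the conjugate fields giving nothing new), expand in the frame $(\Phi,\bar\Phi,\m_1,\m_2)$, and read off the Gauss equation from the $\Phi$-coefficient, the Codazzi relation from the $\m_1$- and $\m_2$-coefficients and from the tangential projections, and the Ricci equation from the $\m_1$-coefficient of the normal projections, with the $\m_2$-coefficient matching automatically. This is correct and coincides with the paper's computation in every detail, including the identified redundancies.
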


 We can easily eliminate the function~$\beta$ in the integrability conditions obtained above. Thus we will 
find two PDE's for the functions $E$ and $\varphi$. Replacing $\beta$ from equality 
\eqref{Nat_Eq_Codazzi_beta_E_phi_R42-tl} in formula \eqref{Nat_Eq_Ricci_beta_E_phi_R42-tl} we get:
\[
\Im \left( \frac{\jj}{2} \ds\frac{\partial^{\,2}\varphi}{\partial \bar t \partial t} \right) = 
-\frac{\Im\big(\bar\varepsilon|\delta|^2\e^{\jj\varphi}\big)}{4E} \:.
\]
Using that
$\frac{\partial^2}{\partial \bar t \partial t} = \frac{1}{4}\Delta^h$, we obtain
\[
\Im \left( \frac{\jj}{8} \Delta^h\varphi \right) =
\frac{1}{8} \Delta^h\varphi = -\frac{\Im\big(\bar\varepsilon|\delta|^2\e^{\jj\varphi}\big)}{4E} \:,
\]
which is equivalent to: 
\[
\Delta^h\varphi = -\frac{2\Im\big(\bar\varepsilon|\delta|^2\e^{\jj\varphi}\big)}{E} \:.
\]

Adding the equation \eqref{Nat_Eq_Gauss_E_phi_R42-tl} to the last equation, we find that $E$ and $\varphi$ 
satisfy the following system of PDE's:
\begin{equation}\label{Nat_Eq_E_phi_R42-tl}
\begin{array}{l}
\Delta^h\ln |E| =  \ds\frac{2\Re\big(\bar\varepsilon|\delta|^2\e^{\jj\varphi}\big)}{E} \:; \\[2.0ex]
\Delta^h\varphi = -\ds\frac{2\Im\big(\bar\varepsilon|\delta|^2\e^{\jj\varphi}\big)}{E} \:;
\end{array} \qquad\quad E<0\;, \quad \varepsilon=\pm 1;\jj\;, \quad \delta= 1;\jj \;.
\end{equation} 
We call this system of PDE's the \emph{system of natural equations} of the minimal time-like surfaces
in $\RR^4_2$.

\smallskip

 Further, consider two minimal time-like surfaces of general type $\M=(\D,\x)$ and $\hat\M=(\D,\hat\x)$ in $\RR^4_2$,
which are obtained one from the other through a proper motion as follows:
\begin{equation}\label{hat_M-M-prop_mov_R42-tl}
\hat\x = A\x + \vb\,; \qquad A \in \mathbf{SO}(2,2,\RR), \ \ \vb \in \RR^4_2 \,.
\end{equation}
If the variable $t=u+\jj v$ determines canonical coordinates on $\M$, then according to Theorem \ref{Can_Move-tl}\,
these coordinates are also canonical on $\hat\M$. Then we have  formulas \eqref{hat_Phi_pn-Phi_pn-mov_R42-tl}.
From the first of these formulas and \eqref{EG-tl} it follows that $\hat E = E$.
Next we prove that the whole representation \eqref{Phi_pn_m12_can_R42-tl} of $\Phi'^\bot$ is invariant 
under a proper motion. From the third formula in \eqref{hat_Phi_pn-Phi_pn-mov_R42-tl} and \eqref{Phi_pn_m12_can_R42-tl} 
we have:
\begin{equation}\label{hat_Phi_pn_m12_can-mov_R42-tl}
\hat\Phi'^\bot=
\delta\e^{\jj\frac{\varphi}{2}}A\m_1 + \varepsilon\delta\e^{-\jj\frac{\varphi}{2}}A\m_2\,; \qquad\quad 
\varepsilon=\pm 1;\jj\,, \quad \delta= 1; \jj\,, \quad  \varphi\in\RR\,.
\end{equation}
We shall apply Proposition \ref{m12_e_d_phi-unic_R42-tl}\, to the above formula.
The quadruple $(A\vX_1,A\vX_2,A\m_1,A\m_2)$ is right oriented under a proper motion. Furthermore, the vectors
$A\m_1$ and $A\m_2$ are isotropic normal vectors, such that $A\m_1\,A\m_2 = \m_1\m_2 = \frac{1}{2}$\,.
This means that the pair $(A\m_1,A\m_2)$ is the canonical basis in the normal space of $\hat\M$, described in
Proposition \ref{m12_e_d_phi-unic_R42-tl}\,. According to the same proposition the representation
\eqref{Phi_pn_m12_can_R42-tl} is unique. Then \eqref{hat_Phi_pn_m12_can-mov_R42-tl} implies:
$\hat\m_1=A\m_1$\,,\  $\hat\m_2=A\m_2$\,,\  $\hat\varepsilon=\varepsilon$\,,\  $\hat\delta=\delta$ and $\hat\varphi=\varphi$\,.
Now we summarize the above results in the following:
\begin{theorem}\label{Thm-Nat_Eq_E_phi_R42-tl}
Let $\M$ be a minimal time-like surface of general type in $\RR^4_2$, parametrized by canonical coordinates.
Then the coefficient $E$ of the first fundamental form, the function $\varphi$ and the constants $\varepsilon$, $\delta$,
determined by the representation \eqref{Phi_pn_m12_can_R42-tl} of $\Phi'^\bot$, satisfy the system of natural equations
\eqref{Nat_Eq_E_phi_R42-tl} of the minimal time-like surfaces in $\RR^4_2$. 

If $\hat\M$ is obtained from $\M$ through a proper motion of the type \eqref{hat_M-M-prop_mov_R42-tl}, 
then it generates the same solution to \eqref{Nat_Eq_E_phi_R42-tl}. %\prfend
\end{theorem}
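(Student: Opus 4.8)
The plan is to prove the two assertions separately, obtaining the first from the integrability analysis already performed for the Frenet system and the second from the uniqueness of the canonical normal basis. For the first assertion, the decisive remark is that on the genuine surface $\M$ the Frenet frame field $(\Phi,\bar\Phi,\m_1,\m_2)$ is made of honest smooth $\DD^4_2$-valued functions of $(u,v)$; hence the mixed second partial derivatives of each frame vector automatically commute. This means that the equalities $\frac{\partial^2\Phi}{\partial\bar t\partial t}=\frac{\partial^2\Phi}{\partial t\partial\bar t}$ and the analogous identities for $\m_1$ and $\m_2$ hold identically, which is precisely the statement that the integrability conditions of \eqref{Frene_Phi_bar_Phi_m1_m2_R42-tl} are satisfied. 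By Proposition \ref{Frene_integr_cond_R42-tl} these conditions are \eqref{Nat_Eq_Gauss_E_phi_R42-tl}, \eqref{Nat_Eq_Codazzi_beta_E_phi_R42-tl} and \eqref{Nat_Eq_Ricci_beta_E_phi_R42-tl}.

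The remaining step for the first assertion is the purely algebraic elimination of $\beta$ already indicated in the text: I substitute $\beta=\frac{\jj}{2}\frac{\partial\varphi}{\partial t}$ from \eqref{Nat_Eq_Codazzi_beta_E_phi_R42-tl} into the Ricci condition \eqref{Nat_Eq_Ricci_beta_E_phi_R42-tl} and use $\frac{\partial^2}{\partial\bar t\partial t}=\frac14\Delta^h$ to obtain the equation for $\varphi$; together with the Gauss condition \eqref{Nat_Eq_Gauss_E_phi_R42-tl}, which is the equation for $\ln|E|$, this is exactly the system \eqref{Nat_Eq_E_phi_R42-tl}. Thus $E$, $\varphi$ and the constants $\varepsilon,\delta$ of the representation \eqref{Phi_pn_m12_can_R42-tl} form a solution.

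For the second assertion the plan is to follow each ingredient of \eqref{Phi_pn_m12_can_R42-tl} under the proper motion $\hat\x=A\x+\vb$ and then to invoke the uniqueness in Proposition \ref{m12_e_d_phi-unic_R42-tl}. By Theorem \ref{Can_Move-tl} the variable $t$ remains canonical on $\hat\M$, and \eqref{hat_Phi_pn-Phi_pn-mov_R42-tl} together with \eqref{EG-tl} gives $\hat E=E$. Feeding \eqref{Phi_pn_m12_can_R42-tl} into $\hat\Phi'^\bot=A\Phi'^\bot$ yields \eqref{hat_Phi_pn_m12_can-mov_R42-tl}. I then check that $(A\m_1,A\m_2)$ meets the hypotheses of the uniqueness proposition: the two vectors are isotropic and normal to $\hat\M$, they satisfy $A\m_1\cdot A\m_2=\m_1\cdot\m_2=\frac12$ because $A$ is an isometry, and $(A\vX_1,A\vX_2,A\m_1,A\m_2)$ is again right oriented. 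Uniqueness then forces $\hat\varepsilon=\varepsilon$, $\hat\delta=\delta$ and $\hat\varphi=\varphi$ (and $\hat\m_i=A\m_i$); since also $\hat E=E$, the pair $(E,\varphi)$ with constants $(\varepsilon,\delta)$ is literally the same solution of \eqref{Nat_Eq_E_phi_R42-tl}.

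The single point that genuinely needs the hypothesis is the orientation check in the last paragraph: the right orientation of $(A\vX_1,A\vX_2,A\m_1,A\m_2)$ requires $\det A=1$, i.e. $A\in\mathbf{SO}(2,2,\RR)$, which is exactly why the motion must be \emph{proper}. For an improper motion the orientation reverses and one would instead recover the sign change of $\varkappa$ recorded in \eqref{hat_K_kappa-K_kappa-mov_R42-tl}. Beyond this, both assertions are an assembly of facts already established — the real computational labour lived in deriving the Frenet formulas and their integrability conditions — so I expect no further analytic obstacle.
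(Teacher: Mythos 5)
Your proposal is correct and follows essentially the same route as the paper: the first assertion is obtained exactly as in the text by noting that on the actual surface the mixed partials of the Frenet frame commute, so the integrability conditions \eqref{Nat_Eq_Gauss_E_phi_R42-tl}--\eqref{Nat_Eq_Ricci_beta_E_phi_R42-tl} of Proposition \ref{Frene_integr_cond_R42-tl} hold and eliminating $\beta$ gives \eqref{Nat_Eq_E_phi_R42-tl}; the second assertion is handled, as in the paper, by applying the uniqueness of Proposition \ref{m12_e_d_phi-unic_R42-tl} to the transformed representation \eqref{hat_Phi_pn_m12_can-mov_R42-tl}. Your explicit remark on where $\det A=1$ enters (the orientation of $(A\vX_1,A\vX_2,A\m_1,A\m_2)$) is exactly the point the paper relies on as well.
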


 Next we obtain different equivalent forms of system \eqref{Nat_Eq_E_phi_R42-tl}.
In order to obtain more symmetric form of the system, we introduce the real function $\eta$ by the equality:
\begin{equation}\label{E_eta_R42-tl}
E = -\,\e^{-\eta} \,.
\end{equation}
Replacing the last formula in \eqref{Nat_Eq_E_phi_R42-tl}, we get:
\begin{equation}\label{Nat_Eq_eta_phi_R42-tl}
\begin{array}{lll}
\Delta^h\eta    \!\! &=& \!\!  2\Re\big(\bar\varepsilon|\delta|^2\e^{\eta+\jj\varphi}\big) \,; \\[1.0ex]
\Delta^h\varphi \!\! &=& \!\! 2\Im\big(\bar\varepsilon|\delta|^2\e^{\eta+\jj\varphi}\big) \,;
\end{array} 
\qquad\quad \eta\in\RR\;, \quad \varphi\in\RR\;, \quad \varepsilon=\pm 1;\jj\;, \quad \delta= 1;\jj \;.
\end{equation} 
The above system is similar to the natural systems, obtained in \cite{G-M-1}, for the surfaces with zero mean curvature
in $\RR^4$ and $\RR^4_1$.

 Next we introduce in the last system \eqref{Nat_Eq_eta_phi_R42-tl} the $\DD$-valued function $\lambda=\eta+\jj\varphi$.
Multiplying by $\jj$ the second equation and adding it to the first one, we get:
\begin{equation}\label{Nat_Eq_lambda_R42-tl}
\Delta^h\lambda    =  2\,\bar\varepsilon|\delta|^2\e^\lambda \,;  
\qquad\quad \lambda\in\DD\;, \quad \varepsilon=\pm 1;\jj\;, \quad \delta= 1;\jj \;.
\end{equation} 
This is a hyperbolic equation of Liouville type for the $\DD$-valued function $\lambda$. 

\medskip

 Further we obtain a system of natural equations for the Gauss curvature $K$ and the curvature of the normal 
connection $\varkappa$, as obtained in \cite{M-A-1} and \cite{S-2}. To this end, we find some relations between
$K$ and $\varkappa$ from one hand, and $E$\,, $\varphi$\,, $\varepsilon$ and $\delta$ from the other.
Let us turn to formulas \eqref{K_kappa_c12_R42-tl} for $K$ and $\varkappa$, related to the representation
\eqref{Phi_pn_m12_R42-tl} of $\Phi'^\bot$. Formula \eqref{Phi_pn_m12_can_R42-tl} is a special case of 
\eqref{Phi_pn_m12_R42-tl} in canonical coordinates.
In this case we have $c_1 = \delta\e^{\jj\frac{\varphi}{2}}$ and $c_2 = \varepsilon\delta\e^{-\jj\frac{\varphi}{2}}$.
Hence $c_1\bar c_2 = \bar\varepsilon|\delta|^2\e^{\jj\varphi}$. Therefore, formulas \eqref{K_kappa_c12_R42-tl} get 
the form:
\begin{equation}\label{K_kappa_c12-can_R42-tl}
K= -\frac{\Re(\bar\varepsilon|\delta|^2\e^{\jj\varphi})}{E^2} \:; \qquad\quad
\varkappa = -\frac{\Im(\bar\varepsilon|\delta|^2\e^{\jj\varphi})}{E^2}\:.
\end{equation}
The last formulas can be combined in the following way:
\begin{equation}\label{K+j_kappa_E_phi_R42-tl}
K+\jj\varkappa = -\frac{\bar\varepsilon|\delta|^2\e^{\jj\varphi}}{E^2} \:.
\end{equation}

 Comparing formulas \eqref{K_kappa_c12-can_R42-tl} with the right parts of the equations \eqref{Nat_Eq_E_phi_R42-tl}, 
we see that the system of natural equations can be written in the form:
\begin{equation}\label{Nat_Eq_E_phi_K_kappa_R42-tl}
\begin{array}{l}
\Delta^h\ln |E| =  -2EK \,; \\%[2.0ex]
\Delta^h\varphi = 2E\varkappa \,.
\end{array} 
\end{equation} 

 In order to express the left sides of the equations of the system by the functions $K$ and $\varkappa$, we have to consider
two cases in accordance with the type of the surface.

First, let the given surface be of the first or the second type according to Definition~\ref{Min_Surf_kind123-def-tl}\,.
Then we have $\bar\varepsilon=\varepsilon\in\RR$ and formulas \eqref{K_kappa_c12-can_R42-tl} can be written in the form:
\begin{equation}\label{K_kappa_E_phi-kind12_R42-tl}
K= -\frac{\varepsilon|\delta|^2\cosh\varphi}{E^2} \:; \qquad\quad
\varkappa = -\frac{\varepsilon|\delta|^2\sinh\varphi}{E^2}\:.
\end{equation}
From here it follows that:
\begin{equation*}%\label{K2_kappa2_E_phi-kind12_R42-tl}
K^2-\varkappa^2 = \frac{1}{E^{\,4}} \:; \qquad\quad
\tanh\varphi = \frac{\varkappa}{K} \:.
\end{equation*}
\smallskip
Taking into account the inequality $E<0$ and the identity\ $\arctanh x = \ds\frac{1}{2}\ln\ds\frac{1+x}{1-x}$\:, 
these equalities can be written as follows:
\begin{equation}\label{E_phi_K2_kappa2-kind12_R42-tl}
E = -\ds\frac{1}{\sqrt[4]{ \vphantom{\mu^2} K^2-\varkappa^2}} \:; \qquad\quad
\varphi = \arctanh \frac{\varkappa}{K} =  \ds\frac{1}{2}\ln\ds\frac{K+\varkappa}{K-\varkappa}\:.
\end{equation}

 Now, let the given surface be of the third type, according to Definition \ref{Min_Surf_kind123-def-tl}\,.
Then $\bar\varepsilon=-\jj$ and formulas \eqref{K_kappa_c12-can_R42-tl} can be written in the form:
\begin{equation}\label{K_kappa_E_phi-kind3_R42-tl}
K= \frac{|\delta|^2\sinh\varphi}{E^2} \:; \qquad\quad
\varkappa = \frac{|\delta|^2\cosh\varphi}{E^2}\:.
\end{equation}
From here it follows that:
\begin{equation*}%\label{K2_kappa2_E_phi-kind3_R42-tl}
\varkappa^2-K^2 = \frac{1}{E^{\,4}} \:; \qquad\quad
\tanh\varphi = \frac{K}{\varkappa} \:.
\end{equation*}
Further, analogously to \eqref{E_phi_K2_kappa2-kind12_R42-tl}, we get:
\begin{equation}\label{E_phi_K2_kappa2-kind3_R42-tl}
E = -\ds\frac{1}{\sqrt[4]{ \vphantom{\mu^2} \varkappa^2 - K^2}} \:; \qquad\quad
\varphi = \arctanh \frac{K}{\varkappa} =  \ds\frac{1}{2}\ln\ds\frac{\varkappa+K}{\varkappa-K}\:.
\end{equation}

 Finally we note that formulas \eqref{E_phi_K2_kappa2-kind12_R42-tl} and \eqref{E_phi_K2_kappa2-kind3_R42-tl} 
can be written in such a form which is valid for all three types of surfaces:
\begin{equation}\label{E_phi_K2_kappa2_R42-tl}
E = -\ds\frac{1}{\sqrt[4]{ \vphantom{\mu^2} |K^2-\varkappa^2|}} \:; \qquad\quad
\varphi = \ds\frac{1}{2}\ln \left| \ds\frac{K+\varkappa}{K-\varkappa} \right| .
\end{equation}

\smallskip

 Replacing the last equalities in \eqref{Nat_Eq_E_phi_K_kappa_R42-tl}, we obtain that the curvatures $K$ and 
$\varkappa$ satisfy the following system, which we also call a \emph{system of natural equations} of the minimal 
time-like surfaces in $\RR^4_2$: 
\begin{equation}\label{Nat_Eq_K_kappa_R42-tl}
\begin{array}{lll}
\sqrt[4]{\big|K^2-\varkappa^2\big|}\; \Delta^h\ln \big|K^2-\varkappa^2\big| &=& -8K\,; \\[1.5ex]
\sqrt[4]{\big|K^2-\varkappa^2\big|}\; \Delta^h\ln \left|\ds\frac{\vphantom{\mu^2}K+\varkappa}{K-\varkappa}\right|&=&-4\varkappa\;;
\end{array}  \qquad\quad K^2-\varkappa^2\neq 0 \,.
\end{equation}

 The natural equations of this type are also obtained in \cite{M-A-1}, under the condition \linebreak $K^2-\varkappa^2 > 0$\,,
which corresponds to surfaces of the first or the second type, according to Definition \ref{Min_Surf_kind123-def-tl}\,.
We also note that the signs of the right sides of the equations \eqref{Nat_Eq_K_kappa_R42-tl} in \cite{M-A-1}
are the opposite to ours. This difference comes from the fact that in this paper the authors use isothermal coordinates
with the condition $E>0$\,. Such coordinates can be obtained changing the places of $u$ and $v$ of our coordinates $(u\,,v)$.
This means that the operator $\Delta^h$ in \cite{M-A-1} is different from ours in sign and the right sides 
of the equations have the opposite sign of ours. 

\smallskip

 As we know from \eqref{hat_K_kappa-K_kappa-mov_R42-tl}, $K$ and $\varkappa$ are invariant under a proper 
motion in $\RR^4_2$. Then, similarly to Theorem \ref{Thm-Nat_Eq_E_phi_R42-tl}\,, we have:
\begin{theorem}\label{Thm-Nat_Eq_K_kappa_R42-tl}
Let $\M$ be a minimal time-like surface of general type in $\RR^4_2$ parametrized by canonical coordinates.
Then the Gauss curvature $K$ and the curvature of the normal connection $\varkappa$ of $\M$, 
are solutions to the system of natural equations \eqref{Nat_Eq_K_kappa_R42-tl} of the minimal time-like 
surfaces in $\RR^4_2$.

If $\hat\M$ is obtained from $\M$ through a proper motion of the type \eqref{hat_M-M-prop_mov_R42-tl},
then it generates the same solution to the system \eqref{Nat_Eq_K_kappa_R42-tl}. %\prfend
\end{theorem}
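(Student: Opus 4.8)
The plan is to derive the system \eqref{Nat_Eq_K_kappa_R42-tl} directly from the already-established system \eqref{Nat_Eq_E_phi_R42-tl} by a change of the unknown functions, and then to read off the invariance statement from the transformation law of $K$ and $\varkappa$ under a proper motion. By Theorem \ref{Thm-Nat_Eq_E_phi_R42-tl}, the coefficient $E$, the function $\varphi$ and the constants $\varepsilon,\delta$ determined by the canonical representation \eqref{Phi_pn_m12_can_R42-tl} already satisfy \eqref{Nat_Eq_E_phi_R42-tl}; using \eqref{K_kappa_c12-can_R42-tl} this system is precisely the equivalent form \eqref{Nat_Eq_E_phi_K_kappa_R42-tl}, namely $\Delta^h\ln|E| = -2EK$ and $\Delta^h\varphi = 2E\varkappa$. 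So I would start from these two equations as the known input.

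Next I would substitute the expressions \eqref{E_phi_K2_kappa2_R42-tl} for $E$ and $\varphi$ in terms of $K$ and $\varkappa$, which were shown to hold uniformly for all three types. From $E = -|K^2-\varkappa^2|^{-1/4}$ one has $\ln|E| = -\tfrac14\ln|K^2-\varkappa^2|$, so the first equation becomes $-\tfrac14\Delta^h\ln|K^2-\varkappa^2| = 2K|K^2-\varkappa^2|^{-1/4}$; multiplying by $-4|K^2-\varkappa^2|^{1/4}$ yields the first equation of \eqref{Nat_Eq_K_kappa_R42-tl}. Similarly, from $\varphi = \tfrac12\ln|(K+\varkappa)/(K-\varkappa)|$ the second equation becomes $\tfrac12\Delta^h\ln|(K+\varkappa)/(K-\varkappa)| = -2\varkappa|K^2-\varkappa^2|^{-1/4}$, and multiplying by $2|K^2-\varkappa^2|^{1/4}$ produces the second equation of \eqref{Nat_Eq_K_kappa_R42-tl}. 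This establishes the first assertion.

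For the second assertion I would observe that, by \eqref{hat_K_kappa-K_kappa-mov_R42-tl} with $\det A = 1$, the curvatures $K$ and $\varkappa$ are both invariant under a proper motion of the type \eqref{hat_M-M-prop_mov_R42-tl}, while by Theorem \ref{Can_Move-tl} the variable $t$ remains canonical on $\hat\M$. Hence $\hat K(t) = K(t)$ and $\hat\varkappa(t) = \varkappa(t)$, so $\M$ and $\hat\M$ produce literally the same pair $(K,\varkappa)$, i.e. the same solution of \eqref{Nat_Eq_K_kappa_R42-tl}.

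The manipulations are entirely routine, so I expect no genuine analytic obstacle; the only point requiring care is verifying that the unified substitution \eqref{E_phi_K2_kappa2_R42-tl} is legitimate across all three types at once — the first and second types coming from \eqref{E_phi_K2_kappa2-kind12_R42-tl} and the third from \eqref{E_phi_K2_kappa2-kind3_R42-tl} — and that the signs and the absolute values combine correctly, so that the single system \eqref{Nat_Eq_K_kappa_R42-tl} is recovered irrespective of the sign of $K^2-\varkappa^2$.
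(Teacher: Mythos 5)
Your proposal is correct and follows essentially the same route as the paper: the paper also obtains \eqref{Nat_Eq_K_kappa_R42-tl} by rewriting \eqref{Nat_Eq_E_phi_R42-tl} in the form \eqref{Nat_Eq_E_phi_K_kappa_R42-tl} and substituting the unified expressions \eqref{E_phi_K2_kappa2_R42-tl} for $E$ and $\varphi$, and it deduces the invariance statement from \eqref{hat_K_kappa-K_kappa-mov_R42-tl} with $\det A = 1$ together with the invariance of the canonical coordinates under a motion. Your sign bookkeeping in both substitutions checks out.
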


 The functions $K$ and $\varkappa$ in \eqref{Nat_Eq_K_kappa_R42-tl} are scalar invariants,
but the system as a whole is not invariant since the hyperbolic Laplace operator $\Delta^h$ is not invariant,
as we noted previously. In order to obtain a completely invariant form of the natural equations, we use the
Laplace-Beltrami operator $\Delta^h_b$ for the surface $\M$, which is given by the equality:
\begin{equation}\label{def_Lpl-Beltr}
d*df = (\Delta^h_b f)\,dV \,,
\end{equation}
where "$*$" denotes the Hodge star operator, $d$ is the exterior differential and $dV\!=\! -E\, du\,\wedge\!~dv$ is 
the volume form. This operator is invariant by definition and in isothermal coordinates it is expressed through the
ordinary hyperbolic Laplace operator in $\RR^2$ as follows:
\begin{equation}\label{Lpl-Beltr_Lpl-tl}
\Delta^h_b = \frac{1}{E}\, \Delta^h .
\end{equation}
In canonical coordinates the equality \eqref{E_phi_K2_kappa2_R42-tl} is valid and therefore we have the following 
representation for this operator:
\begin{equation}\label{Lpl-Beltr_Lpl_R42-tl}
\Delta^h_b = \frac{1}{E}\, \Delta^h = -\sqrt[4]{ \vphantom{\mu^2} |K^2-\varkappa^2| }\; \Delta^h .
\end{equation}

Applying the last formula to the system \eqref{Nat_Eq_K_kappa_R42-tl}, it gets the form:
\begin{equation}\label{Nat_Eq_Beltr_K_kappa_R42-tl}
\begin{array}{lll}
\Delta^h_b \ln \big|K^2-\varkappa^2\big| &=& 8K\,; \\[1.5ex]
\Delta^h_b \ln \left|\ds\frac{\vphantom{\mu^2}K+\varkappa}{K-\varkappa}\right| &=& 4\varkappa\;;
\end{array}  \qquad\quad K^2-\varkappa^2\neq 0 \,.
\end{equation}
This form of the natural system of equations is completely invariant and it is also valid in arbitrary
(possibly non isothermal) local coordinates.  

 If we add and subtract consecutively the two equations in \eqref{Nat_Eq_Beltr_K_kappa_R42-tl}, then we 
obtain the system of natural equations in the form, obtained in \cite{S-2}\,: 
\begin{equation}\label{Nat_Eq_Sakaki_K_kappa_R42-tl}
\begin{array}{l}
\Delta^h_b \ln |K + \varkappa | = 2(2K+\varkappa) \,; \\[1.0ex]
\Delta^h_b \ln |K - \varkappa | = 2(2K-\varkappa) \,;
\end{array} \qquad\quad K^2-\varkappa^2\neq 0 \,.
\end{equation}

\smallskip

 Now, let us return to the Frenet type formulas \eqref{Frene_Phi_bar_Phi_m1_m2_R42-tl}. We shall prove that 
the equations \eqref{Nat_Eq_E_phi_R42-tl} are not only necessary but also sufficient conditions for a local existence  
of solutions to the system \eqref{Frene_Phi_bar_Phi_m1_m2_R42-tl}. This means that any quadruple 
$E$, $\varphi$, $\varepsilon$ and $\delta$, satisfying \eqref{Nat_Eq_E_phi_R42-tl}, at least locally coincides
with the corresponding quadruple obtained from a minimal time-like surface  of general type in $\RR^4_2$. 
Furthermore, the theorem for the uniqueness of the solution to \eqref{Frene_Phi_bar_Phi_m1_m2_R42-tl} gives that 
this surface is unique up to a proper motion in $\RR^4_2$. More concretely, we shall formulate and prove the Bonnet type
theorem for the system \eqref{Frene_Phi_bar_Phi_m1_m2_R42-tl}.
\begin{theorem}\label{Bone_Phi_bar_Phi_m1_m2_E_phi_R42-tl}
Let $E<0$ and $\varphi$ be real functions, defined in a domain $\D\subset\RR^2$ and let
$\varepsilon = \pm 1;\jj$ and $\delta = 1;\jj$ be such constants that the four quantities give a solution to 
the system of natural equations \eqref{Nat_Eq_E_phi_R42-tl} of the minimal time-like surfaces in $\RR^4_2$.
For any point $p_0\in\D$, there exists a neighborhood $\D_0\subset\D$ of $p_0$ and a map $\x: \D_0 \to \RR^4_2$,
such that $(\D_0,\x)$ is a minimal time-like surface of general type in $\RR^4_2$, parametrized by canonical coordinates,
for which the given function $E$ is the coefficient of the first fundamental form and having a representation of 
$\Phi'^\bot$ of the type \eqref{Phi_pn_m12_can_R42-tl} with the given $\varphi$, $\varepsilon$ and $\delta$.
If $(\hat\D_0,\hat\x)$ is another surface with the same properties, then there exists a subdomain $\tilde\D_0$
of $\D_0$ and $\hat\D_0$, containing $p_0$, so that the surface $(\tilde\D_0,\hat\x)$ is obtained from
$(\tilde\D_0,\x)$ through a proper motion in $\RR^4_2$ of the type \eqref{hat_M-M-prop_mov_R42-tl}.
\end{theorem}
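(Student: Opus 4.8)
The plan is to prove this as a Bonnet type theorem, treating the Frenet formulas \eqref{Frene_Phi_bar_Phi_m1_m2_R42-tl} as a first order linear system of PDE's for the frame field $(\Phi,\bar\Phi,\m_1,\m_2)$ and then reconstructing the surface from a solution. First I would set $\beta := \frac{\jj}{2}\,\frac{\partial\varphi}{\partial t}$, i.e. impose the Codazzi relation \eqref{Nat_Eq_Codazzi_beta_E_phi_R42-tl} as the definition of $\beta$; with this choice the coefficients of \eqref{Frene_Phi_bar_Phi_m1_m2_R42-tl} become known functions of the given data $E,\varphi,\varepsilon,\delta$. The $t$-derivatives are given directly by \eqref{Frene_Phi_bar_Phi_m1_m2_R42-tl}, while the $\bar t$-derivatives follow by complex conjugation (the vectors $\m_1,\m_2$ being real, $\bar\m_i=\m_i$, and $\frac{\partial\bar\Phi}{\partial t}=\overline{\partial\Phi/\partial\bar t}=0$), so all first partials of the four unknown $\DD^4_2$-valued functions are prescribed and the system is a total (Pfaffian) system. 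By Proposition \ref{Frene_integr_cond_R42-tl} its integrability conditions are exactly \eqref{Nat_Eq_Gauss_E_phi_R42-tl}, \eqref{Nat_Eq_Codazzi_beta_E_phi_R42-tl} and \eqref{Nat_Eq_Ricci_beta_E_phi_R42-tl}: the first is the first line of \eqref{Nat_Eq_E_phi_R42-tl}, the second holds by our definition of $\beta$, and, eliminating $\beta$, the third becomes the second line of \eqref{Nat_Eq_E_phi_R42-tl}. Since $(E,\varphi,\varepsilon,\delta)$ solves \eqref{Nat_Eq_E_phi_R42-tl} by hypothesis, the Frobenius theorem yields on a neighborhood $\D_0$ of $p_0$ a unique solution $(\Phi,\bar\Phi,\m_1,\m_2)$ with any prescribed initial value at $p_0$.

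Next I would fix the initial value to be a right oriented frame at $p_0$ realizing the algebraic normalization, namely $\Phi_0=\sqrt{-E(p_0)}\,(\vX_1+\jj\vX_2)$ for an orthonormal tangent pair with $\vX_1^2=-1$, together with an isotropic normal pair satisfying \eqref{m12_psd_ort_R42-tl}. The crucial step is to show that the pointwise scalar products of the solution frame remain equal to the prescribed Gram matrix $G^0$ (with entries $\Phi^2=\bar\Phi^2=0$, $\Phi\cdot\bar\Phi=2E$, $\Phi\cdot\m_i=\bar\Phi\cdot\m_i=\m_1^2=\m_2^2=0$, $\m_1\m_2=\tfrac12$) throughout $\D_0$. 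Differentiating each scalar product and substituting \eqref{Frene_Phi_bar_Phi_m1_m2_R42-tl}, these products satisfy a homogeneous linear first order PDE system; a direct check, using $\delta^2=1$, $\partial_t E=E\,\partial_t\ln|E|$ and, for the non-constant entry $\Phi\cdot\bar\Phi=2E$, the Gauss equation \eqref{Nat_Eq_Gauss_E_phi_R42-tl}, shows that $G^0$ is itself a solution. Since $G^0$ agrees with the true Gram matrix at $p_0$, uniqueness for this linear system forces the two to coincide on $\D_0$. This structure-preservation step is the main obstacle I anticipate, and it is precisely where the natural equations \eqref{Nat_Eq_E_phi_R42-tl} are genuinely used.

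Once the metric normalization holds, $\Phi$ is holomorphic ($\partial\Phi/\partial\bar t=0$) with $\Phi^2=0$ and $\|\Phi\|^2=2E<0$, so by Theorem \ref{x_Phi-thm-tl} it integrates to a map $\x:\D_0\to\RR^4_2$, unique up to a translation, giving a regular time-like surface in isothermal coordinates with $E<0$ that is minimal by Theorem \ref{Min_x_Phi-thm-tl}. The relations $\Phi\cdot\m_i=\bar\Phi\cdot\m_i=0$ identify $\m_1,\m_2$ as a normal frame, so the second line of \eqref{Frene_Phi_bar_Phi_m1_m2_R42-tl} gives $\Phi'^\bot=\delta\e^{\jj\frac{\varphi}{2}}\m_1+\varepsilon\delta\e^{-\jj\frac{\varphi}{2}}\m_2$; then \eqref{Phi_pn2_m12_R42-tl} together with $\delta^2=1$ and $\m_1\m_2=\tfrac12$ yields ${\Phi'^\bot}^2=\varepsilon\notin\DD_0$. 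Hence the surface is of general type, parametrized by canonical coordinates, with the prescribed $E$ and, by Proposition \ref{m12_e_d_phi-unic_R42-tl}, with the prescribed $\varphi,\varepsilon,\delta$ in the representation \eqref{Phi_pn_m12_can_R42-tl}.

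For uniqueness, given a second surface $(\hat\D_0,\hat\x)$ with the same data, its frame $(\hat\Phi,\overline{\hat\Phi},\hat\m_1,\hat\m_2)$ solves the same system \eqref{Frene_Phi_bar_Phi_m1_m2_R42-tl}, since the coefficients depend only on $E,\varphi,\varepsilon,\delta$ and the derived $\beta$. Both initial frames at $p_0$ are right oriented and share the Gram matrix $G^0(p_0)$, so there is a unique $A\in\mathbf{SO}(2,2,\RR)$ carrying one to the other. Then $A\Phi,A\bar\Phi,A\m_1,A\m_2$ solves the same system with the same initial value as the hatted frame, whence by uniqueness $\hat\Phi=A\Phi$; comparing real and imaginary parts gives $\hat\x_u=A\x_u$, $\hat\x_v=A\x_v$, and therefore $\hat\x=A\x+\vb$ on a common subdomain $\tilde\D_0\ni p_0$, i.e. a proper motion of the type \eqref{hat_M-M-prop_mov_R42-tl}, which completes the proof.
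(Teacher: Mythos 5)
Your proposal is correct and follows essentially the same route as the paper: define $\beta$ by the Codazzi relation \eqref{Nat_Eq_Codazzi_beta_E_phi_R42-tl}, verify the integrability conditions of the Frenet system from the natural equations, integrate the system with a suitably normalized right oriented initial frame, propagate the Gram relations via a homogeneous linear first\,-order system, reconstruct $\x$ through Theorem \ref{x_Phi-thm-tl}, and get uniqueness from the unique proper motion matching the two initial frames. The only cosmetic differences are that the paper works with the difference functions $f_1,\dots,f_7$ (which vanish identically) rather than with the Gram matrix directly, and that the Gauss equation is in fact not needed in the propagation step (the terms $\frac{\partial\ln|E|}{\partial t}\cdot 2E$ and $2\frac{\partial E}{\partial t}$ cancel there) — it enters only through the integrability conditions, exactly as your Frobenius step already records.
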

\begin{proof}
 Let $(u,v)$ be the coordinates in $\D$ with $t=u+\jj v$ and $p_0\in\D$ be a fixed point with coordinates 
$t_0=u_0+\jj v_0$. Define the function $\beta$ in $\D$ by the equality 
$\beta = \frac{\jj}{2} \frac{\partial\varphi}{\partial t}$\,,
which coincides with \eqref{Nat_Eq_Codazzi_beta_E_phi_R42-tl}. Next we prove that the five quantities: 
$E$, $\varphi$, $\varepsilon$, $\delta$ and $\beta$ satisfy the integrability conditions for the system
\eqref{Frene_Phi_bar_Phi_m1_m2_R42-tl}.

 The integrability condition \eqref{Nat_Eq_Gauss_E_phi_R42-tl} is fulfilled, since it coincides with 
the first equation of the system of natural equations \eqref{Nat_Eq_E_phi_R42-tl}. The condition 
\eqref{Nat_Eq_Codazzi_beta_E_phi_R42-tl} is also fulfilled, because it coincides with the definition of
$\beta$. Using the equality $\frac{\partial^2}{\partial \bar t \partial t}=\frac{1}{4}\Delta^h$ and
the definition of $\beta$, we get:
\[
\Im \ds\frac{\partial \beta}{\partial \bar t} = 
\Im \left( \frac{\jj}{2} \frac{\partial^2 \varphi}{\partial \bar t \partial t} \right) = 
\Im \left( \frac{\jj}{8} \Delta^h\varphi \right) = \frac{1}{8} \Delta^h\varphi \,.
\]
Expressing in the last equality $\Delta^h\varphi$ by the second equation of the system of natural equations
\eqref{Nat_Eq_E_phi_R42-tl}, we obtain the integrability condition \eqref{Nat_Eq_Ricci_beta_E_phi_R42-tl}.

Now, consider the equalities \eqref{Frene_Phi_bar_Phi_m1_m2_R42-tl}, as a system of PDE's with an unknown
complex (over $\DD$) vector function $\Phi$ and unknown real vector functions $\m_1$ and $\m_2$. 
The integrability conditions of the system are exactly \eqref{Nat_Eq_Gauss_E_phi_R42-tl}, 
\eqref{Nat_Eq_Codazzi_beta_E_phi_R42-tl} and \eqref{Nat_Eq_Ricci_beta_E_phi_R42-tl}, according to
Proposition \ref{Frene_integr_cond_R42-tl}, which are fulfilled. Consequently, the system has at least locally 
a unique solution by the given initial conditions.

 Let $(\x_{u;0}\,,\x_{v;0}\,,\n_{1;0}\,,\n_{2;0})$ be a right oriented quadruple of mutually orthogonal vectors
in $\RR^4_2$, such that $\x_{u;0}^2=-\x_{v;0}^2=E(t_0)$ and $\n_{1;0}^2=-\n_{2;0}^2=-1$. Using this basis, we 
shall obtain an isotropic basis $(\Phi_0\,, \bar\Phi_0\,,\m_{1;0}\,,\m_{2;0})$ through the formulas
\eqref{Phi_def-tl} and \eqref{m12_n12_R42-tl}. Thus we define 
$\Phi_0 = \x_{u;0}+\jj\x_{v;0}$\,, $\m_{1;0}=\frac{\n_{1;0} + \n_{2;0}}{2}$\, and\,
$\m_{1;0}=\frac{-\n_{1;0} + \n_{2;0}}{2}$\,.
Consider the system \eqref{Frene_Phi_bar_Phi_m1_m2_R42-tl} with initial conditions for $t_0$ the obtained vectors
$\Phi_0$\,, $\m_{1;0}$\, and\, $\m_{2;0}$\,. This system has a solution consisting of three functions: 
$\Phi$\,, $\m_1$\, and\, $\m_2$, defined in a neighborhood $\D_0$ of $t_0$ satisfying the initial conditions 
$\Phi(t_0)=\Phi_0$\,, $\m_1(t_0)=\m_{1;0}$\, and\, $\m_2(t_0)=\m_{2;0}$\,. Next we prove that in a neighborhood
$\D_0$ the obtained solution satisfies the identities:
$\Phi^2=\m_1^2=\m_2^2=0$\,, $\Phi\m_1=\Phi\m_2=0$\,, $\|\Phi\|^2=2E$ and $\m_1\m_2=\frac{1}{2}$\,.
Note that by the given initial conditions these equalities are fulfilled for $t=t_0$\,.  
Let us consider the following functions:
$f_1=\Phi^2$\,, $f_2=\m_1^2$\,, $f_3=\m_2^2$\,,  $f_4=\Phi\m_1$\,, $f_5=\Phi\m_2$\,, 
$f_6=\|\Phi\|^2-2E$\, and\, $f_7=\m_1\m_2-\frac{1}{2}$\, and add to them their conjugate functions: 
$\bar f_1$\,, $\bar f_4$\, and\, $\bar f_5$. The other four functions are real-valued and that is why we do not 
write their conjugate functions. These ten functions are zero for $t=t_0$\,. Further, we see that these functions 
give a solution to a homogeneous system of PDE's of the first order solved with respect to the derivatives.
For $\frac{\partial f_1}{\partial t}$ we have from the second equation in \eqref{Frene_Phi_bar_Phi_m1_m2_R42-tl}:
\[
\begin{array}{lll}
\ds\frac{\partial f_1}{\partial t} 
&=& 2\ds\frac{\partial\Phi}{\partial t}\Phi = 
2\left( \frac{\partial\ln |E|}{\partial t}\Phi + 
\delta\e^{\jj\frac{\varphi}{2}}\m_1 + \varepsilon\delta\e^{-\jj\frac{\varphi}{2}}\m_2 \right)\Phi \\[2ex]
&=& 
2\ds\frac{\partial\ln |E|}{\partial t}f_1 + 2\delta\e^{\jj\frac{\varphi}{2}}f_4 + 2\varepsilon\delta\e^{-\jj\frac{\varphi}{2}}f_5\,.
\end{array}
\]
The first equation in \eqref{Frene_Phi_bar_Phi_m1_m2_R42-tl} gives that:
\[
\ds\frac{\partial f_1}{\partial \bar t} = 2\frac{\partial\Phi}{\partial \bar t}\Phi = 0\,.
\]
For $\frac{\partial f_2}{\partial t}$ we apply the third equation in \eqref{Frene_Phi_bar_Phi_m1_m2_R42-tl}:
\[
\frac{\partial f_2}{\partial t} = 2\frac{\partial m_1}{\partial t}\m_1 = 
2\left( -\frac{\varepsilon\delta\e^{-\jj\frac{\varphi}{2}}}{4E}\bar\Phi + \beta\m_1 \right)\m_1 = 
-2\frac{\varepsilon\delta\e^{-\jj\frac{\varphi}{2}}}{4E}\bar f_4 + 2\beta f_2 \,.
\]
Since $f_2$ is real-valued, then the formula for $\frac{\partial f_2}{\partial \bar t}$ is obtained from the above 
equality by a conjugation and we omit it. Using the fourth equation in \eqref{Frene_Phi_bar_Phi_m1_m2_R42-tl},
we obtain for $\frac{\partial f_3}{\partial t}$:
\[
\frac{\partial f_3}{\partial t} = 2\frac{\partial m_2}{\partial t}\m_2 = 
2\left( -\frac{\delta\e^{\jj\frac{\varphi}{2}}}{4E}\bar\Phi - \beta\m_2 \right)\m_2 = 
-2\frac{\delta\e^{\jj\frac{\varphi}{2}}}{4E}\bar f_5 - 2\beta f_3 \,.
\]
The formula for $\frac{\partial f_3}{\partial \bar t}$ is obtained from the above equality by a conjugation.
For $\frac{\partial f_4}{\partial t}$ we apply the second and the third equation in 
\eqref{Frene_Phi_bar_Phi_m1_m2_R42-tl} and find:
\[
\begin{array}{lll}
\ds\frac{\partial f_4}{\partial t} 
&=& \ds \frac{\partial\Phi}{\partial t}\m_1 + \Phi\frac{\partial\m_1}{\partial t} \\[2ex]
&=& 
\ds \left( \frac{\partial\ln |E|}{\partial t}\Phi +  
\delta\e^{\jj\frac{\varphi}{2}}\m_1 + \varepsilon\delta\e^{-\jj\frac{\varphi}{2}}\m_2 \right)\m_1 + 
\Phi \left( -\frac{\varepsilon\delta\e^{-\jj\frac{\varphi}{2}}}{4E}\bar\Phi + \beta\m_1 \right)  \\[2ex]
&=& 
\ds\frac{\partial\ln |E|}{\partial t}f_4 + \delta\e^{\jj\frac{\varphi}{2}}f_2 + 
\varepsilon\delta\e^{-\jj\frac{\varphi}{2}}\left(f_7+\frac{1}{2}\right) - 
\frac{\varepsilon\delta\e^{-\jj\frac{\varphi}{2}}}{4E}\left(f_6+2E\right) + \beta f_4  \\[2ex]
&=&
\ds \left( \frac{\partial\ln |E|}{\partial t}+\beta \right)f_4 + \delta\e^{\jj\frac{\varphi}{2}}f_2 + 
\varepsilon\delta\e^{-\jj\frac{\varphi}{2}}f_7 - \frac{\varepsilon\delta\e^{-\jj\frac{\varphi}{2}}}{4E}f_6 \,.
\end{array}
\]
Then for $\frac{\partial f_4}{\partial \bar t}$ we get:
\[
\ds\frac{\partial f_4}{\partial \bar t} 
= \ds \frac{\partial\Phi}{\partial \bar t}\m_1 + \Phi\frac{\partial\m_1}{\partial \bar t} = 
\Phi\frac{\partial\m_1}{\partial \bar t} =  
\Phi \left( -\frac{\bar\varepsilon\bar\delta\e^{\jj\frac{\varphi}{2}}}{4E}\Phi + \bar\beta\m_1 \right) =
-\ds\frac{\bar\varepsilon\bar\delta\e^{\jj\frac{\varphi}{2}}}{4E}f_1 + \bar\beta f_4  \,.
\]
For $\frac{\partial f_5}{\partial t}$ we apply the second and the fourth equation in \eqref{Frene_Phi_bar_Phi_m1_m2_R42-tl} 
and obtain:
\[
\begin{array}{lll}
\ds\frac{\partial f_5}{\partial t} 
&=& \ds \frac{\partial\Phi}{\partial t}\m_2 + \Phi\frac{\partial\m_2}{\partial t} \\[2ex]
&=& 
\ds \left( \frac{\partial\ln |E|}{\partial t}\Phi +  
\delta\e^{\jj\frac{\varphi}{2}}\m_1 + \varepsilon\delta\e^{-\jj\frac{\varphi}{2}}\m_2 \right)\m_2 + 
\Phi \left( -\frac{\delta\e^{\jj\frac{\varphi}{2}}}{4E}\bar\Phi - \beta\m_2 \right)  \\[2ex]
&=& 
\ds\frac{\partial\ln |E|}{\partial t}f_5 + \delta\e^{\jj\frac{\varphi}{2}}\left(f_7+\frac{1}{2}\right) + 
\varepsilon\delta\e^{-\jj\frac{\varphi}{2}}f_3 - 
\frac{\delta\e^{\jj\frac{\varphi}{2}}}{4E}\left(f_6+2E\right) - \beta f_5  \\[2ex]
&=&
\ds \left( \frac{\partial\ln |E|}{\partial t}-\beta \right)f_5 + \delta\e^{\jj\frac{\varphi}{2}}f_7 + 
\varepsilon\delta\e^{-\jj\frac{\varphi}{2}}f_3 - \frac{\delta\e^{\jj\frac{\varphi}{2}}}{4E}f_6 \,.
\end{array}
\]
Then we calculate $\frac{\partial f_5}{\partial \bar t}$:
\[
\ds\frac{\partial f_5}{\partial \bar t} 
= \ds \frac{\partial\Phi}{\partial \bar t}\m_2 + \Phi\frac{\partial\m_2}{\partial \bar t} = 
\Phi\frac{\partial\m_2}{\partial \bar t} =  
\Phi \left( -\frac{\bar\delta\e^{-\jj\frac{\varphi}{2}}}{4E}\Phi - \bar\beta\m_2 \right) =
-\ds\frac{\bar\delta\e^{-\jj\frac{\varphi}{2}}}{4E}f_1 - \bar\beta f_5  \,.
\]
Using the first and the second equation in \eqref{Frene_Phi_bar_Phi_m1_m2_R42-tl}, we get for 
$\frac{\partial f_6}{\partial t}$\,
\[
\begin{array}{lll}
\ds\frac{\partial f_6}{\partial t} 
&=& \ds \frac{\partial\Phi}{\partial t}\bar\Phi + \Phi\frac{\partial\bar\Phi}{\partial t} - 2\frac{\partial E}{\partial t} =
\frac{\partial\Phi}{\partial t}\bar\Phi - 2\frac{\partial E}{\partial t}  \\[2ex]
&=& 
\ds \left( \frac{\partial\ln |E|}{\partial t}\Phi +  
\delta\e^{\jj\frac{\varphi}{2}}\m_1 + \varepsilon\delta\e^{-\jj\frac{\varphi}{2}}\m_2 \right)\bar\Phi - 
2\frac{\partial E}{\partial t}  \\[2ex]
&=& 
\ds \frac{\partial\ln |E|}{\partial t}\left(f_6+2E\right) + \delta\e^{\jj\frac{\varphi}{2}}\bar f_4 + 
\varepsilon\delta\e^{-\jj\frac{\varphi}{2}}\bar f_5 - 2\frac{\partial E}{\partial t}  \\[2ex]
&=&
\ds \frac{\partial\ln |E|}{\partial t}f_6 + \delta\e^{\jj\frac{\varphi}{2}}\bar f_4 + 
\varepsilon\delta\e^{-\jj\frac{\varphi}{2}}\bar f_5 \,.
\end{array}
\]
The formula for $\frac{\partial f_6}{\partial \bar t}$ is obtained from the above equality by a conjugation.
Finally, using the third and the fourth equation in \eqref{Frene_Phi_bar_Phi_m1_m2_R42-tl}, we obtain for 
$\frac{\partial f_7}{\partial t}$:
\[
\begin{array}{lll}
\ds\frac{\partial f_7}{\partial t} 
&=& \ds \frac{\partial\m_1}{\partial t}\m_2 + \m_1\frac{\partial\m_2}{\partial t} =
\ds  \left( -\frac{\varepsilon\delta\e^{-\jj\frac{\varphi}{2}}}{4E}\bar\Phi + \beta\m_1 \right)\m_2 + 
\m_1 \left( -\frac{\delta\e^{\jj\frac{\varphi}{2}}}{4E}\bar\Phi - \beta\m_2 \right)  \\[2ex]
&=& 
\ds -\frac{\varepsilon\delta\e^{-\jj\frac{\varphi}{2}}}{4E}\bar f_5 - \frac{\delta\e^{\jj\frac{\varphi}{2}}}{4E}\bar f_4 \,.  
\end{array}
\]
The formula for $\frac{\partial f_7}{\partial \bar t}$ is obtained from the above equality by a conjugation. 
The formulas for $\bar f_1$\,, $\bar f_4$\, and\, $\bar f_5$ are also obtained in a similar way from the corresponding 
formulas for $f_1$\,, $f_4$\, and \, $f_5$\,.

 It follows from the above calculations that the functions $f_1$\,, $f_2$\,, $f_3$\,, $f_4$\,, $f_5$\,, $f_6$\,, 
$f_7$\,, $\bar f_1$\,, $\bar f_4$\, and\, $\bar f_5$ are solutions to a homogeneous system PDE's of the first order
solved with respect to the derivatives with zero initial conditions. Consequently, the ten functions under 
consideration are identically zero. Using this fact, we shall prove that there exists a minimal time-like 
surface of general type in $\RR^4_2$, for which the obtained function $\Phi$ is given by the formula \eqref{Phi_def-tl}
and the canonical isotropic normal basis coincides with the obtained pair $(\m_1\,,\m_2)$.

 Now we have to check that the function $\Phi$ satisfies the conditions of Theorem \ref{x_Phi-thm-tl}\,.
The equality $f_1=0$ gives the first one of the conditions \eqref{Phi_cond-tl}. The equality $f_6=0$ 
gives $\|\Phi\|^2=2E<0$\,, which is the second condition in \eqref{Phi_cond-tl}. The first equation in
\eqref{Frene_Phi_bar_Phi_m1_m2_R42-tl} implies that $\Phi$ is holomorphic (over $\DD$), which gives the third condition.
Thus the function $\Phi$ satisfies all three conditions \eqref{Phi_cond-tl} and therefore Theorem \ref{x_Phi-thm-tl}\,
is applicable. According to this theorem, there exists a neighborhood $\D_0$ of $t_0$ and a vector function $\x$
defined in $\D_0$, such that \eqref{Phi_def-tl} is valid and $(\D_0,\x)$ is a time-like surface with isothermal 
coordinates $(u,v)$. It remains to see that this surface has the required properties.

 First we note that $\Phi$ is a holomorphic function and in view of Theorem \ref{Min_x_Phi-thm-tl}\,,
the surface $(\D_0,\x)$ is minimal. The equation $\|\Phi\|^2=2E$ shows that the coefficient $E$ of the first 
fundamental form of $(\D_0,\x)$ coincides with the given function $E$. Using that the functions $f_i$ are zero, 
we see that the pair $(\m_1\,,\m_2)$ is a normalized isotropic basis in the normal space of the obtained surface.
Squaring the second equality in \eqref{Frene_Phi_bar_Phi_m1_m2_R42-tl}, we get ${\Phi'^\bot}^2=\varepsilon$.
This means that the given coordinates are canonical for the obtained surface, according to Definition \ref{Can-def_R42-tl}\,.
Under the given initial conditions, the quadruple $(\x_u\,,\x_u\,,\m_1\,,\m_2)$ is right oriented for $t=t_0$\,,
and consequently, in a neighborhood of $t_0$\,. The second equality in \eqref{Frene_Phi_bar_Phi_m1_m2_R42-tl} 
gives that $\Phi'^\bot$ has a representation of the type \eqref{Phi_pn_m12_can_R42-tl} via the obtained vectors
$\m_1$ and $\m_2$ and via the given functions $\varphi$, $\varepsilon$ and $\delta$. Consequently, the surface 
$(\D_0,\x)$ satisfies the conditions of the theorem. 

\smallskip

 Next we prove the uniqueness of the surface. To this end, suppose that $(\hat\D_0,\hat\x)$ is another surface, 
having the properties of $(\D_0,\x)$. Then $\big(\hat\x_u(t_0),\hat\x_v(t_0),\hat\m_1(t_0),\hat\m_2(t_0)\big)$ 
is a right oriented quadruple of vectors in $\RR^4_2$, with the properties:\, 
$\hat\x_u^2(t_0)=-\hat\x_v^2(t_0)=E(t_0)$\,,\ \; $\hat\m_1^2(t_0)=\hat\m_2^2(t_0)=0$\, and\, 
$\hat\m_1(t_0)\hat\m_2(t_0)=\frac{1}{2}$\,. Let $A$ be the unique linear operator transforming the quadruple 
$\big(\x_u(t_0),\x_v(t_0),\m_1(t_0),\m_2(t_0)\big)$\, in\,  
$\big(\hat\x_u(t_0),\hat\x_v(t_0),\hat\m_1(t_0),\hat\m_2(t_0)\big)$. 
Then $A$ is a proper motion from $\mathbf{SO}(2,2,\RR)$. Define the map $\hat{\hat\x}$ from $\D_0$ to $\RR^4_2$ 
by $\hat{\hat\x}=A\x$. Then $(\D_0,\hat{\hat\x})$ is a minimal time-like surface, obtained from $(\D_0,\x)$ 
through a proper motion. Hence $(u,v)$ are canonical coordinates for $(\D_0,\hat{\hat\x})$ and the quantities
$E$, $\varphi$, $\varepsilon$ and $\delta$ of $(\D_0,\hat{\hat\x})$ coincide with those for $(\D_0,\x)$.
It follows from \eqref{Nat_Eq_Codazzi_beta_E_phi_R42-tl} that the coefficient $\beta$ for $(\D_0,\hat{\hat\x})$ 
also coincides with that for $(\D_0,\x)$. Thus we obtain that the corresponding functions
$\hat{\hat\Phi}$, ${\hat{\hat\m}}_1$ and ${\hat{\hat\m}}_2$ are also a solution to the system
\eqref{Frene_Phi_bar_Phi_m1_m2_R42-tl}. It follows from the definition of the motion $A$ that
$\hat{\hat\Phi}$, ${\hat{\hat\m}}_1$ and ${\hat{\hat\m}}_2$ satisfy the same initial conditions for $t=t_0$,
as the functions $\hat\Phi$, $\hat\m_1$ and $\hat\m_2$, of $(\hat\D_0,\hat\x)$. Since the system 
\eqref{Frene_Phi_bar_Phi_m1_m2_R42-tl} has locally a unique solution under the given initial conditions, then 
it follows that there exists a connected neighborhood $\tilde\D_0$ of $t_0$, in which $\hat\Phi=\hat{\hat\Phi}$. 
But $\hat{\hat\x}=A\x$ implies that $\hat{\hat\Phi}=A\Phi$ and consequently $\hat\Phi=A\Phi$ in $\tilde\D_0$. 
For a connected neighborhood, the last equality is equivalent to $\hat\x = A\x + \vb$ for a given 
$\vb\in\RR^4_2$ and for all $t\in\tilde\D_0$. This proves the assertion.
\end{proof}

 Our next goal is to prove a theorem of Bonnet type for the system of natural equations \eqref{Nat_Eq_K_kappa_R42-tl}
as well. Some results of this kind are obtained in \cite{M-A-1} and \cite{S-2}, under the assumption $K^2-\varkappa^2>0$\,.
Now, using Theorem \ref{Bone_Phi_bar_Phi_m1_m2_E_phi_R42-tl}, we can obtain theorems of Bonnet type for
the system \eqref{Nat_Eq_K_kappa_R42-tl} in all three types of surfaces.

 Let us begin with the surfaces of the first type.

\begin{theorem}\label{Bone_Phi_bar_Phi_m1_m2_K_kappa_kind1_R42-tl}
Let $K$ and $\varkappa$ be real functions, defined in a domain $\D\subset\RR^2$, satisfying the inequality 
$K^2-\varkappa^2>0$ and let the pair $(K,\varkappa)$ be a solution to the system of natural equations
\eqref{Nat_Eq_K_kappa_R42-tl} for the minimal time-like surfaces in $\RR^4_2$.

 Then in a neighborhood of any point $t_0\in\D$, there exists a unique up to a proper motion minimal time-like surface  
$\M$ of the first tye, according to Definition \ref{Min_Surf_kind123-def-tl}\,, given in canonical coordinates, for which
the given functions $K$ and $\varkappa$ are the Gauss curvature and the curvature of the normal connection, respectively.
\end{theorem}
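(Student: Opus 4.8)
The plan is to reduce the statement to the already-established Bonnet type theorem \ref{Bone_Phi_bar_Phi_m1_m2_E_phi_R42-tl}, which produces a minimal time-like surface, unique up to a proper motion, from any solution $(E,\varphi,\varepsilon,\delta)$ of the natural system \eqref{Nat_Eq_E_phi_R42-tl}. The whole task therefore amounts to manufacturing the correct quadruple $(E,\varphi,\varepsilon,\delta)$ out of the given pair $(K,\varkappa)$, checking that it solves \eqref{Nat_Eq_E_phi_R42-tl}, and confirming that the surface so obtained has precisely $K$ and $\varkappa$ as its curvatures. First I would set $\varepsilon=1$, which is exactly what forces the first type, since squaring \eqref{Phi_pn_m12_can_R42-tl} gives ${\Phi'^\bot}^2=\varepsilon\delta^2=\varepsilon=1\in\DD_+$. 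Then I would define $E$ and $\varphi$ by the inversion formulas \eqref{E_phi_K2_kappa2-kind12_R42-tl}, i.e.\ $E=-(K^2-\varkappa^2)^{-1/4}$ and $\varphi=\tfrac12\ln\frac{K+\varkappa}{K-\varkappa}$. These are well defined and smooth precisely because $K^2-\varkappa^2>0$: the fourth root is real and positive, $K$ cannot vanish, and $K+\varkappa$, $K-\varkappa$ share a common sign.

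The remaining constant $\delta\in\{1,\jj\}$ is not free. Since $\cosh\varphi>0$ and $E^2>0$, the relation $K=-\varepsilon|\delta|^2\cosh\varphi/E^2$ from \eqref{K_kappa_E_phi-kind12_R42-tl} forces $|\delta|^2=-\operatorname{sgn}K$, so I would take $\delta=\jj$ when $K>0$ and $\delta=1$ when $K<0$. (Recall $|1|^2=1$ while $|\jj|^2=\jj\bar\jj=-1$.) This sign bookkeeping of $\delta$ is the one genuinely delicate point of the proof; the rest is mechanical inversion of explicit formulas.

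Next I would verify that the quadruple $(E,\varphi,1,\delta)$ solves \eqref{Nat_Eq_E_phi_R42-tl}. Rather than substitute directly, I would invoke the reversibility of the passage the paper already carried out in the opposite direction: the intermediate system \eqref{Nat_Eq_E_phi_K_kappa_R42-tl}, $\Delta^h\ln|E|=-2EK$ and $\Delta^h\varphi=2E\varkappa$, is merely \eqref{Nat_Eq_E_phi_R42-tl} rewritten through \eqref{K_kappa_c12-can_R42-tl}, and inserting the explicit expressions \eqref{E_phi_K2_kappa2-kind12_R42-tl} turns \eqref{Nat_Eq_E_phi_K_kappa_R42-tl} into the system \eqref{Nat_Eq_K_kappa_R42-tl}. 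Since $(K,\varkappa)$ is assumed to solve \eqref{Nat_Eq_K_kappa_R42-tl}, reading these equivalences backwards shows that $(E,\varphi,1,\delta)$ solves \eqref{Nat_Eq_E_phi_R42-tl}.

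With the hypotheses of Theorem \ref{Bone_Phi_bar_Phi_m1_m2_E_phi_R42-tl} in place, I would apply it to obtain, near any $t_0\in\D$, a minimal time-like surface $\M$ of general type in canonical coordinates, unique up to a proper motion \eqref{hat_M-M-prop_mov_R42-tl}, with first fundamental form coefficient $E$ and a representation \eqref{Phi_pn_m12_can_R42-tl} of $\Phi'^\bot$ carrying the prescribed $\varphi,\varepsilon=1,\delta$; since $\varepsilon=1$, it is of the first type by Definition \ref{Min_Surf_kind123-def-tl}. It then remains to identify its curvatures: feeding $E,\varphi,\varepsilon=1,\delta$ into \eqref{K+j_kappa_E_phi_R42-tl} and using $\e^{\jj\varphi}=\operatorname{sgn}(K)(K+\jj\varkappa)(K^2-\varkappa^2)^{-1/2}$ together with $E^2=(K^2-\varkappa^2)^{-1/2}$ gives $K_\M+\jj\varkappa_\M=-|\delta|^2\operatorname{sgn}(K)(K+\jj\varkappa)$, which equals $K+\jj\varkappa$ exactly because of the choice $|\delta|^2=-\operatorname{sgn}K$. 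Hence $\M$ has the desired curvatures, proving existence. For uniqueness I would observe that any first-type surface in canonical coordinates with curvatures $K,\varkappa$ determines, via \eqref{E_phi_K2_kappa2-kind12_R42-tl} and the same sign analysis, the identical data $(E,\varphi,\varepsilon,\delta)$; the uniqueness clause of Theorem \ref{Bone_Phi_bar_Phi_m1_m2_E_phi_R42-tl} then yields that it differs from $\M$ by a proper motion of the type \eqref{hat_M-M-prop_mov_R42-tl}.
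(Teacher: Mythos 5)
Your proposal is correct and follows essentially the same route as the paper's proof: fix $\varepsilon=1$, recover $E$, $\varphi$, $\delta$ from $(K,\varkappa)$ via \eqref{E_phi_K2_kappa2-kind12_R42-tl} and the sign condition $-\varepsilon|\delta|^2K>0$, check that this quadruple solves \eqref{Nat_Eq_E_phi_R42-tl} by reversing the equivalences \eqref{K_kappa_c12-can_R42-tl}--\eqref{Nat_Eq_E_phi_K_kappa_R42-tl}, and then invoke both the existence and uniqueness clauses of Theorem \ref{Bone_Phi_bar_Phi_m1_m2_E_phi_R42-tl}. The only cosmetic difference is that you define $\varphi$ by the explicit logarithm and verify $\cosh\varphi$, $\sinh\varphi$ afterwards, whereas the paper defines $\varphi$ through $\sinh\varphi=-\varepsilon|\delta|^2E^2\varkappa$ and deduces the $\cosh$ relation from $(E^2K)^2-(E^2\varkappa)^2=1$; the sign bookkeeping for $\delta$ is identical.
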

\begin{proof}
 In order to prove the existence of the desired surface, we will reduce the statement to the corresponding statement
of Theorem~\ref{Bone_Phi_bar_Phi_m1_m2_E_phi_R42-tl}\,. To this end we will show how the functions $K$ and 
$\varkappa$ generate the functions $E$, $\varphi$, $\varepsilon$ and $\delta$, satisfying the conditions of
Theorem~\ref{Bone_Phi_bar_Phi_m1_m2_E_phi_R42-tl}\,. First we define the function $E$ by the first formula
in \eqref{E_phi_K2_kappa2-kind12_R42-tl} and put $\varepsilon= 1$\,. Then we determine uniquely $\delta = 1;\jj$\,, 
depending on the sign of $K$, so that $-\varepsilon|\delta|^2K>0$\,. Finally we define uniquely $\varphi$, 
so that the following equality holds $-\varepsilon|\delta|^2E^2\varkappa=\sinh\varphi$\,, which is equivalent to 
the second equality in \eqref{K_kappa_E_phi-kind12_R42-tl}. It follows from the definition of $E$ that 
$(E^2K)^2-(E^2\varkappa)^2=1$\,. The equalities and the inequalities obtained so far imply that
$-\varepsilon|\delta|^2E^2K=\cosh\varphi$\,, which is equivalent to  the first equality in \eqref{K_kappa_E_phi-kind12_R42-tl}.
Under the condition $\varepsilon= 1$\,, we already know that the two formulas in \eqref{K_kappa_E_phi-kind12_R42-tl}
are equivalent to the two formulas in \eqref{K_kappa_c12-can_R42-tl}. Further, equalities \eqref{K_kappa_E_phi-kind12_R42-tl}
imply the equalities \eqref{E_phi_K2_kappa2_R42-tl}.

Consequently, the so defined quantities $E$, $\varphi$, $\varepsilon$ and $\delta$ satisfy the equalities 
\eqref{K_kappa_c12-can_R42-tl} and \eqref{E_phi_K2_kappa2_R42-tl}. Since $K$ and $\varkappa$ satisfy the system 
\eqref{Nat_Eq_K_kappa_R42-tl}, then applying equalities \eqref{K_kappa_c12-can_R42-tl} and \eqref{E_phi_K2_kappa2_R42-tl} 
to this system, we obtain that $E$, $\varphi$, $\varepsilon=1$ and $\delta$ satisfy the system \eqref{Nat_Eq_E_phi_R42-tl}.
Now, apllying Theorem~\ref{Bone_Phi_bar_Phi_m1_m2_E_phi_R42-tl} to these quantities, we obtain a minimal time-like surface
$\M$ with invariants $E$, $\varphi$, $\varepsilon=1$ and $\delta$. Under the condition $\varepsilon=1$ it follows that 
$\M$ is of the first type according to Definition \ref{Min_Surf_kind123-def-tl}\,. Furthermore, the surface $\M$ satisfies
formulas \eqref{K_kappa_c12-can_R42-tl}. Hence, the Gauss curvature and the curvature of the normal connection of $\M$
coincide with the given functions $K$ and $\varkappa$.

Next we prove that the surface $\M$ is unique. Assume that $\hat\M$ is another minimal surface of the first type, 
given in canonical coordinates and having the same $K$ and $\varkappa$. Under the fixed $\varepsilon=1$, the 
quantities $E$, $\varphi$ and $\delta$ are determined uniquely by $K$ and $\varkappa$. Therefore, $\hat\M$ has
the same invariants $E$, $\varphi$, $\varepsilon$ and $\delta$ as the surface $\M$. Then, the uniqueness part of
Theorem~\ref{Bone_Phi_bar_Phi_m1_m2_E_phi_R42-tl} implies that $\hat\M$ is obtained from $\M$ through a proper 
motion.
\end{proof}
Next we consider the theorem of Bonnet type for the surfaces of the second type.
\begin{theorem}\label{Bone_Phi_bar_Phi_m1_m2_K_kappa_kind2_R42-tl}
Let $K$ and $\varkappa$ be real functions, defined in a domain $\D\subset\RR^2$, satisfying the inequality 
$K^2-\varkappa^2>0$ and let the pair $(K,\varkappa)$ is a solution to the system of natural equations
\eqref{Nat_Eq_K_kappa_R42-tl} for the minimal time-like surfaces in $\RR^4_2$.

 Then in a neighborhood of any point $t_0\in\D$, there exists a unique up to a proper motion minimal time-like surface
$\M$ of the second type, according to Definition \ref{Min_Surf_kind123-def-tl}\,, given in canonical coordinates,
whose Gauss curvature and curvature of the normal connection are the given functions $K$ and $\varkappa$, respectively.
\end{theorem}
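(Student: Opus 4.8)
The plan is to mirror the proof of Theorem~\ref{Bone_Phi_bar_Phi_m1_m2_K_kappa_kind1_R42-tl}, reducing the assertion to the already established Bonnet-type Theorem~\ref{Bone_Phi_bar_Phi_m1_m2_E_phi_R42-tl} for the system~\eqref{Nat_Eq_E_phi_R42-tl}. The only structural change is the choice $\varepsilon=-1$, which is precisely what selects the second type in Definition~\ref{Min_Surf_kind123-def-tl}. So I would first manufacture, out of the given pair $(K,\varkappa)$, a quadruple $E$, $\varphi$, $\varepsilon$, $\delta$ satisfying the hypotheses of Theorem~\ref{Bone_Phi_bar_Phi_m1_m2_E_phi_R42-tl}.

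Concretely, I would set $\varepsilon=-1$ and define $E$ by the first formula in \eqref{E_phi_K2_kappa2-kind12_R42-tl}, which is legitimate because $K^2-\varkappa^2>0$. Since $|\delta|^2=1$ for $\delta=1$ and $|\delta|^2=-1$ for $\delta=\jj$, I would then select $\delta\in\{1,\jj\}$ according to the sign of $K$ so that $-\varepsilon|\delta|^2K=|\delta|^2K>0$; this is the point where the choice differs from the first type. Finally I would define $\varphi$ uniquely by $-\varepsilon|\delta|^2E^2\varkappa=\sinh\varphi$, i.e. the second relation of \eqref{K_kappa_E_phi-kind12_R42-tl}. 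The defining relation for $E$ gives $(E^2K)^2-(E^2\varkappa)^2=1$, so together with the sign choice it forces $-\varepsilon|\delta|^2E^2K=\cosh\varphi$, the first relation of \eqref{K_kappa_E_phi-kind12_R42-tl}. With $\varepsilon=-1$ these two relations are equivalent to \eqref{K_kappa_c12-can_R42-tl}, and they also yield \eqref{E_phi_K2_kappa2_R42-tl}.

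Next I would feed this data into the natural system. Because $(K,\varkappa)$ solves \eqref{Nat_Eq_K_kappa_R42-tl}, substituting \eqref{K_kappa_c12-can_R42-tl} and \eqref{E_phi_K2_kappa2_R42-tl} shows that $E$, $\varphi$, $\varepsilon=-1$, $\delta$ solve \eqref{Nat_Eq_E_phi_R42-tl}. Applying Theorem~\ref{Bone_Phi_bar_Phi_m1_m2_E_phi_R42-tl} to this quadruple produces, in a neighborhood of $t_0$, a minimal time-like surface $\M$ in canonical coordinates whose $\Phi'^\bot$ has the representation \eqref{Phi_pn_m12_can_R42-tl} with these very invariants; the value $\varepsilon=-1$ means $\M$ is of the second type. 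Reading off \eqref{K_kappa_c12-can_R42-tl} for $\M$ recovers exactly the prescribed $K$ and $\varkappa$ as its Gauss and normal-connection curvatures.

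For uniqueness I would argue that, once $\varepsilon=-1$ is fixed, the quantities $E$, $\varphi$ and $\delta$ are determined by $K$ and $\varkappa$ through the formulas above; hence any second surface $\hat\M$ of the second type in canonical coordinates with the same $K,\varkappa$ carries the same quadruple, and the uniqueness part of Theorem~\ref{Bone_Phi_bar_Phi_m1_m2_E_phi_R42-tl} gives that $\hat\M$ comes from $\M$ by a proper motion of type \eqref{hat_M-M-prop_mov_R42-tl}. I do not expect a genuine obstacle here: the whole argument is a sign-bookkeeping variant of the first-type case, and the only place requiring care is checking that the substitution $\varepsilon=-1$ keeps the hyperbolic identities \eqref{K_kappa_E_phi-kind12_R42-tl} through \eqref{E_phi_K2_kappa2_R42-tl} self-consistent, in particular that $\delta$ can always be chosen to realize the required sign of $\cosh\varphi$.
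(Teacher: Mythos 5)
Your proposal is correct and follows essentially the same route as the paper's own proof: fix $\varepsilon=-1$, build $E$, $\delta$, $\varphi$ from $(K,\varkappa)$ via \eqref{E_phi_K2_kappa2-kind12_R42-tl} and \eqref{K_kappa_E_phi-kind12_R42-tl}, reduce existence and uniqueness to Theorem~\ref{Bone_Phi_bar_Phi_m1_m2_E_phi_R42-tl}. The sign bookkeeping you flag (choosing $\delta$ so that $-\varepsilon|\delta|^2K>0$, which is possible since $K^2>\varkappa^2\ge 0$ forces $K\neq 0$) is exactly how the paper handles it.
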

\begin{proof}
 As in the case of surfaces of the first type, we will reduce the assertion to the corresponding assertion of 
Theorem~\ref{Bone_Phi_bar_Phi_m1_m2_E_phi_R42-tl}\,. Next we show how $K$ and $\varkappa$ generate the quantities 
$E$, $\varphi$, $\varepsilon$ and $\delta$, satisfying the conditions of Theorem~\ref{Bone_Phi_bar_Phi_m1_m2_E_phi_R42-tl}\,.
First we define the function $E$ by the first formula in \eqref{E_phi_K2_kappa2-kind12_R42-tl} and put $\varepsilon= -1$\,.
Further we determine uniquely $\delta = 1;\jj$\,, depending on the sign of $K$, so that $-\varepsilon|\delta|^2K>0$\,. 
Finally we determine uniquely $\varphi$, so that $-\varepsilon|\delta|^2E^2\varkappa=\sinh\varphi$\,, which is equivalent
to the second equation in \eqref{K_kappa_E_phi-kind12_R42-tl}. t follows from the definition of $E$ that 
$(E^2K)^2-(E^2\varkappa)^2=1$\,. Using these equalities and inequalities we conclude that 
$-\varepsilon|\delta|^2E^2K=\cosh\varphi$\,, which is equivalent to the first equality in \eqref{K_kappa_E_phi-kind12_R42-tl}. 
Under the condition $\varepsilon= -1$\,, the two formulas in \eqref{K_kappa_E_phi-kind12_R42-tl}
are equivalent to the two formulas in \eqref{K_kappa_c12-can_R42-tl}. Formulas \eqref{K_kappa_E_phi-kind12_R42-tl} imply
formulas \eqref{E_phi_K2_kappa2_R42-tl}. Consequently, the quantities $E$, $\varphi$, $\varepsilon$ and $\delta$
satisfy equalities \eqref{K_kappa_c12-can_R42-tl} and \eqref{E_phi_K2_kappa2_R42-tl}. Since $K$ and $\varkappa$ satisfy
the system \eqref{Nat_Eq_K_kappa_R42-tl}, applying equalities \eqref{K_kappa_c12-can_R42-tl} and 
\eqref{E_phi_K2_kappa2_R42-tl} to this system, we obtain that $E$, $\varphi$, $\varepsilon=-1$ and $\delta$ satisfy
the system \eqref{Nat_Eq_E_phi_R42-tl}. Now, we apply Theorem~\ref{Bone_Phi_bar_Phi_m1_m2_E_phi_R42-tl} to these
quantities and obtain a minimal time-like surface $\M$, with invariants $E$, $\varphi$, $\varepsilon=-1$ and $\delta$.
Since $\varepsilon=-1$, then it follows that $\M$ is of the second type according Definition \ref{Min_Surf_kind123-def-tl}\,.
The surface $\M$ satisfies formulas \eqref{K_kappa_c12-can_R42-tl}. 
Hence the Gauss curvature and the curvature of the normal connection of $\M$ coincide with the given functions $K$ and $\varkappa$, respectively.

 To prove the uniqueness of the surface $\M$, let $\hat\M$ be another surface of the second type, given in canonical 
coordinates, whose Gauss curvature and the curvature of the normal connection are $K$ and $\varkappa$, respectively.
Under the condition $\varepsilon=-1$, the quantities $E$, $\varphi$ and $\delta$ are determined uniquely by $K$ 
and $\varkappa$. Therefore, the surface $\hat\M$ has the same invariants $E$, $\varphi$, $\varepsilon$ and $\delta$ as 
the surface $\M$. Then the uniqueness part of Theorem~\ref{Bone_Phi_bar_Phi_m1_m2_E_phi_R42-tl} implies that $\hat\M$ 
is obtained from $\M$ through a proper motion.
\end{proof}

 If $\M$ is a minimal time-like surface of the first type in $\RR^4_2$, given in canonical coordinates, then
it follows from Theorem~\ref{Thm-Nat_Eq_K_kappa_R42-tl} that the pair of invariants $(K,\varkappa)$ is a solution 
to the system \eqref{Nat_Eq_K_kappa_R42-tl}. Then Theorem \ref{Bone_Phi_bar_Phi_m1_m2_K_kappa_kind2_R42-tl} 
implies that $\M$ corresponds to a minimal time-like surface of the second type with the same invariants
$(K,\varkappa)$. We have already obtained a correspondence of the same type in Theorem~\ref{Can_antimov_conj_R42-tl}\,.
Next we show that up to a proper motion there exists only one such correspondence. Therefore, the correspondence 
just described, coincides with that obtained in Theorem~\ref{Can_antimov_conj_R42-tl}\,.

 For the sake of brevity, further we introduce a short denotation for the surface, obtained from another surface 
through an anti-isometry, preserving the orientation in $\RR^4_2$.
\begin{dfn}\label{dfh-M^a_R42-tl}
Let $\M=(\D,\x)$ be a minimal time-like surface in $\RR^4_2$, given in isothermal coordinates, and 
$\hat\x(s)=A\x(\jj s)+\vb$\,, where $A$ is a linear anti-isometry in $\RR^4_2$, such that $\det A = 1$\; 
and\; $\vb \in \RR^4_2$. Then $\M^a$ denotes any surface of the type $(\jj\D,\hat\x)$.
\end{dfn}

Note that, if $A_1$ and $A_2$ are two different anti-isometries of this type, we obtain two different surfaces 
$\M^a_1$ and $\M^a_2$. The surface $\M^a_2$ is obtained from the surface $\M^a_1$ through the transformation 
$\ds A_2^{\vphantom{-1}} A_1^{-1}$, which is a proper motion in $\RR^4_2$. Consequently, $\M^a$ is determined 
uniquely from $\M$ up to a proper motion. Using this denotation, for the two surfaces $\hat\M$ and $\M$
in Theorem~\ref{Can_antimov_conj_R42-tl} we can write the relation $\hat\M \equiv \bar\M^a$.

For the surfaces of the first and the second type we will prove a theorem, which is inverse to 
Theorem~\ref{Can_antimov_conj_R42-tl}\,.
\begin{theorem}\label{M1_M2_same_K_kappa_R42-tl}
Let $\M_1=(\D,\x_1(t))$ be a minimal time-like surface of the first type and  $\M_2=(\D,\x_2(t))$ be a minimal 
time-like surface of the second type in $\RR^4_2$, defined in $\D$, $t\in\D$ being canonical coordinates
for both surfaces $\M_1$ and $\M_2$. Let both surfaces have the same Gauss curvature $K$ and the same curvature 
of the normal connection $\varkappa$ considered as functions  of the coordinates $t$.

 Then the surfaces $\M_1$ and $\M_2$ cannot be obtained one from the other through a motion 
(including an improper motion) in $\RR^4_2$. In a sufficiently small neighborhood of any point $t_0\in\D$ the 
following relation $\M_2 \equiv \bar\M_1^a$ holds.
\end{theorem}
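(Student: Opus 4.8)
The plan is to treat the two assertions in turn, handling the impossibility of a motion by means of the type being a motion-invariant, and the relation $\M_2\equiv\bar\M_1^a$ by combining an existence result with a Bonnet-type uniqueness result. First I would dispose of the non-existence of a motion. By Theorem \ref{kind123_Change_Move-tl} the type of a minimal time-like surface of general type is invariant both under a change of the isothermal coordinates and under an arbitrary (proper or improper) motion in $\RR^4_2$. Since $\M_1$ is of the first type and $\M_2$ of the second type, any motion carrying one surface onto the other would force them to have the same type, which is a contradiction; this settles the first claim at once.

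For the relation $\M_2\equiv\bar\M_1^a$ the idea is to realise $\bar\M_1^a$ as a second-type surface sharing the invariants of $\M_2$, and then to invoke uniqueness. Applying Theorem \ref{Can_antimov_conj_R42-tl} to the first-type surface $\M_1$ produces the surface $\bar\M_1^a$, which by that theorem is of the second type, is again parametrized by the very same canonical coordinate $t$, and satisfies $\hat K(t)=K(t)$ and $\hat\varkappa(t)=\varkappa(t)$, where $K,\varkappa$ are the curvatures of $\M_1$. Because $\M_1$ and $\M_2$ are assumed to carry the same functions $K$ and $\varkappa$ of $t$, the surfaces $\bar\M_1^a$ and $\M_2$ are now both minimal time-like surfaces of the second type, given in canonical coordinates, with one and the same Gauss curvature and curvature of the normal connection.

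Here I would appeal to the uniqueness part of the Bonnet-type Theorem \ref{Bone_Phi_bar_Phi_m1_m2_K_kappa_kind2_R42-tl}: near $t_0$ a solution $(K,\varkappa)$ with $K^2-\varkappa^2>0$ determines a minimal time-like surface of the second type in canonical coordinates uniquely up to a proper motion. The hypotheses of that theorem are met, since $(K,\varkappa)$ solves the natural system \eqref{Nat_Eq_K_kappa_R42-tl} by Theorem \ref{Thm-Nat_Eq_K_kappa_R42-tl}, while $K^2-\varkappa^2>0$ holds automatically for second-type surfaces by Theorem \ref{DegP_kind123-K_kappa-tl}. It follows that $\M_2$ and $\bar\M_1^a$ differ only by a proper motion; since by Definition \ref{dfh-M^a_R42-tl} the surface $\bar\M_1^a$ is itself defined only up to a proper motion, this is precisely the required relation $\M_2\equiv\bar\M_1^a$ in a sufficiently small neighborhood of $t_0$.

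The computational content is light, as all the heavy lifting is contained in Theorems \ref{Can_antimov_conj_R42-tl} and \ref{Bone_Phi_bar_Phi_m1_m2_K_kappa_kind2_R42-tl}; the one point demanding care is to verify that the canonical coordinate $t$ surviving the passage through the orientation-preserving anti-isometry of Theorem \ref{Can_antimov_conj_R42-tl} is literally the coordinate in which $\M_2$ is expressed, so that the phrase \emph{the same $K$ and $\varkappa$ as functions of $t$} is exactly the input the uniqueness theorem requires. This matching of coordinates is what Theorem \ref{Can_antimov_conj_R42-tl} is stated to guarantee, so no auxiliary change of variables is incurred and the argument closes without further obstruction.
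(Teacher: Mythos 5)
Your proposal is correct and follows essentially the same route as the paper: the motion-impossibility via the motion-invariance of the type (Theorem \ref{kind123_Change_Move-tl}), and the relation $\M_2\equiv\bar\M_1^a$ by producing $\bar\M_1^a$ as a second-type surface with the same $K$, $\varkappa$ in the same canonical coordinate $t$ via Theorem \ref{Can_antimov_conj_R42-tl} and then invoking the uniqueness part of Theorem \ref{Bone_Phi_bar_Phi_m1_m2_K_kappa_kind2_R42-tl}. Your explicit verification that $(K,\varkappa)$ solves the natural system and that $K^2-\varkappa^2>0$ is a welcome addition the paper leaves implicit.
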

\begin{proof}
 The surfaces $\M_1$ and $\M_2$ cannot be obtained one from the other through a motion in $\RR^4_2$, 
since they are of a different type. According to Theorem~\ref{kind123_Change_Move-tl}\, the type of a surface 
is invariant under any motion in $\RR^4_2$.

On the other hand, let $\hat\M$ denote a surface obtained from the conjugate surface of $\M_1$, through an 
anti-isometry preserving the orientation of $\RR^4_2$. Then, according to Theorem~\ref{Can_antimov_conj_R42-tl}\,,\, 
$\hat\M$ is of the second type with canonical coordinates $t$ and has the same invariants $K$ and $\varkappa$, 
as functions of these coordinates. It follows from the uniqueness part of 
Theorem~\ref{Bone_Phi_bar_Phi_m1_m2_K_kappa_kind2_R42-tl} that $\hat\M$ and $\M_2$ are locally related by a proper 
motion in $\RR^4_2$. The composition of a proper motion with an anti-isometry, preserving the orientation of $\RR^4_2$,
is again an anti-isometry, preserving the orientation of $\RR^4_2$. This gives the relation $\M_2 \equiv \bar\M_1^a$.
\end{proof}

Now we will prove the corresponding theorem of Bonnet type for the minimal time-like surfaces of the third type.
\begin{theorem}\label{Bone_Phi_bar_Phi_m1_m2_K_kappa_kind3_R42-tl}
Let $K$ and $\varkappa$ be real functions, defined in a domain $\D\subset\RR^2$, satisfying the inequality
$K^2-\varkappa^2<0$ and let the pair $(K,\varkappa)$ be a solution to the system of natural equations 
\eqref{Nat_Eq_K_kappa_R42-tl} of the minimal time-like surfaces in $\RR^4_2$.

 Then in a neighborhood of any point $t_0\in\D$, there exists a unique up to a proper motion minimal time-like 
surface $\M$ of the third type, according to Definition \ref{Min_Surf_kind123-def-tl}\,, given in canonical 
coordinates, whose Gauss curvature and  curvature of the normal connection are the given functions $K$ and 
$\varkappa$, respectively.
\end{theorem}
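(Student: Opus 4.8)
The plan is to mirror the arguments used for the surfaces of the first and second type in Theorems~\ref{Bone_Phi_bar_Phi_m1_m2_K_kappa_kind1_R42-tl} and \ref{Bone_Phi_bar_Phi_m1_m2_K_kappa_kind2_R42-tl}, reducing everything to the already established Bonnet type Theorem~\ref{Bone_Phi_bar_Phi_m1_m2_E_phi_R42-tl} for the system \eqref{Nat_Eq_E_phi_R42-tl}. Concretely, from the given pair $(K,\varkappa)$ with $K^2-\varkappa^2<0$ I would manufacture a quadruple $E$, $\varphi$, $\varepsilon$, $\delta$ solving \eqref{Nat_Eq_E_phi_R42-tl} with $\varepsilon=\jj$, feed it into Theorem~\ref{Bone_Phi_bar_Phi_m1_m2_E_phi_R42-tl} to produce the surface, and then read off that its invariants are exactly $K$ and $\varkappa$. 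The only structural novelty compared with the first two types is that here the relevant formulas are the third type formulas \eqref{K_kappa_E_phi-kind3_R42-tl} and \eqref{E_phi_K2_kappa2-kind3_R42-tl} instead of \eqref{K_kappa_E_phi-kind12_R42-tl} and \eqref{E_phi_K2_kappa2-kind12_R42-tl}.

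First I would fix $\varepsilon=\jj$ and define $E$ by the first formula in \eqref{E_phi_K2_kappa2-kind3_R42-tl}, so that $E<0$ and $E^4(\varkappa^2-K^2)=1$. The inequality $K^2-\varkappa^2<0$ forces $\varkappa\neq 0$, and this is the point where the construction genuinely differs from the first two types: there $\delta$ was selected from the sign of $K$, but in the third type $K$ may vanish while $\varkappa$ cannot. Accordingly I would choose $\delta\in\{1,\jj\}$ by the sign of $\varkappa$, so that $|\delta|^2\varkappa>0$ (recall $|1|^2=1$ and $|\jj|^2=-1$). I would then define $\varphi$ uniquely by $\sinh\varphi=|\delta|^2E^2K$, which is the first equality in \eqref{K_kappa_E_phi-kind3_R42-tl}. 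Squaring and using $E^4(\varkappa^2-K^2)=1$ together with $|\delta|^4=1$ gives $\cosh^2\varphi=(|\delta|^2E^2\varkappa)^2$; since $\cosh\varphi>0$ and $|\delta|^2\varkappa>0$ by the choice of $\delta$, this yields $\cosh\varphi=|\delta|^2E^2\varkappa$, i.e.\ the second equality in \eqref{K_kappa_E_phi-kind3_R42-tl}.

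With $\varepsilon=\jj$ the two equalities \eqref{K_kappa_E_phi-kind3_R42-tl} are equivalent to \eqref{K_kappa_c12-can_R42-tl}, and by construction the universal formulas \eqref{E_phi_K2_kappa2_R42-tl} hold. Substituting \eqref{K_kappa_c12-can_R42-tl} into the right-hand sides of \eqref{Nat_Eq_E_phi_R42-tl} and \eqref{E_phi_K2_kappa2_R42-tl} into the left-hand sides, the hypothesis that $(K,\varkappa)$ solves \eqref{Nat_Eq_K_kappa_R42-tl} shows that $E$, $\varphi$, $\varepsilon=\jj$, $\delta$ solve \eqref{Nat_Eq_E_phi_R42-tl}; this is the same computation, valid for all three types, that underlies the first two Bonnet theorems. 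Theorem~\ref{Bone_Phi_bar_Phi_m1_m2_E_phi_R42-tl} then produces a minimal time-like surface $\M$ in canonical coordinates realizing these data. Since $\varepsilon=\jj$, the surface $\M$ is of the third type by Definition~\ref{Min_Surf_kind123-def-tl}, and because $\M$ satisfies \eqref{K_kappa_c12-can_R42-tl} its Gauss curvature and normal curvature are precisely $K$ and $\varkappa$.

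For uniqueness I would note that, once $\varepsilon=\jj$ is fixed, the data $E$, $\varphi$, $\delta$ are determined uniquely by $(K,\varkappa)$ through the recipe above. Hence any second third-type surface $\hat\M$ given in canonical coordinates with the same $K$ and $\varkappa$ must generate the same quadruple $E$, $\varphi$, $\varepsilon$, $\delta$, and the uniqueness part of Theorem~\ref{Bone_Phi_bar_Phi_m1_m2_E_phi_R42-tl} then gives a proper motion carrying $\M$ to $\hat\M$. I expect no serious obstacle here; the one point demanding care is the sign bookkeeping for $\delta$, which in this case must be governed by $\varkappa$ rather than $K$, since $K$ may be zero while $K^2-\varkappa^2<0$ guarantees $\varkappa\neq 0$.
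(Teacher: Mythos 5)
Your proposal is correct and follows essentially the same route as the paper's own proof: fix $\varepsilon=\jj$, define $E$ by \eqref{E_phi_K2_kappa2-kind3_R42-tl}, pick $\delta$ from the sign of $\varkappa$ so that $|\delta|^2\varkappa>0$, determine $\varphi$ from $\sinh\varphi=|\delta|^2E^2K$, verify \eqref{K_kappa_E_phi-kind3_R42-tl} and hence \eqref{K_kappa_c12-can_R42-tl} and \eqref{E_phi_K2_kappa2_R42-tl}, and reduce both existence and uniqueness to Theorem~\ref{Bone_Phi_bar_Phi_m1_m2_E_phi_R42-tl}. Your observation that $\delta$ must here be governed by $\varkappa$ rather than $K$ (since $K^2-\varkappa^2<0$ forces $\varkappa\neq0$ while $K$ may vanish) is exactly the point where the paper's argument for the third type diverges from the first two.
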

\begin{proof} The scheme of the proof is similar to the previous cases.
First we define the function $E$ by the first formula in \eqref{E_phi_K2_kappa2-kind3_R42-tl} and put $\varepsilon=\jj$\,.
Further we determine uniquely $\delta = 1;\jj$\,, depending on the sign of $\varkappa$, so that 
$|\delta|^2\varkappa>0$\,. Finally we determine uniquely $\varphi$, so that $|\delta|^2E^2K=\sinh\varphi$\,,
which is equivalent to the first equality in \eqref{K_kappa_E_phi-kind3_R42-tl}. It follows from the definition of $E$
that $(E^2\varkappa)^2-(E^2K)^2=1$\,. In view of the so obtained equalities and inequalities we get 
$|\delta|^2E^2\varkappa=\cosh\varphi$\,, which is equivalent to the second equality in \eqref{K_kappa_E_phi-kind3_R42-tl}. 
Under the condition $\varepsilon=\jj$\,, the two formulas in \eqref{K_kappa_E_phi-kind3_R42-tl} 
are equivalent to the two formulas in \eqref{K_kappa_c12-can_R42-tl}. Further, formulas \eqref{K_kappa_E_phi-kind3_R42-tl} 
imply formulas \eqref{E_phi_K2_kappa2_R42-tl}. From here we conclude that the quantities $E$, $\varphi$, $\varepsilon$ 
and $\delta$ satisfy equalities \eqref{K_kappa_c12-can_R42-tl} and \eqref{E_phi_K2_kappa2_R42-tl}. Since $K$ and $\varkappa$ 
satisfy the system \eqref{Nat_Eq_K_kappa_R42-tl}, then applying equalities \eqref{K_kappa_c12-can_R42-tl} and 
\eqref{E_phi_K2_kappa2_R42-tl} to this system, we obtain that $E$, $\varphi$, $\varepsilon$ and $\delta$ satisfy
the system \eqref{Nat_Eq_E_phi_R42-tl}. Now we apply Theorem~\ref{Bone_Phi_bar_Phi_m1_m2_E_phi_R42-tl} under the condition 
$\varepsilon=\jj$\, and obtain a minimal time-like surface $\M$ of the third type with invariants $E$, $\varphi$, 
$\varepsilon$ and $\delta$. This surface $\M$ satisfies formulas \eqref{K_kappa_c12-can_R42-tl}, which gives that 
the Gauss curvature and the curvature of the normal connection are the given functions $K$ and $\varkappa$.

Finally we prove that the surface  $\M$ is unique. Assume that $\hat\M$ is another minimal time-like surface of  the third
type, given in canonical coordinates, whose Gauss curvature and curvature of the normal connection are $K$ and $\varkappa$,
respectively. Under the condition $\varepsilon=\jj$, the quantities $E$, $\varphi$ and $\delta$ are determined uniquely 
through $K$ and $\varkappa$. Therefore, $\hat\M$ has the same invariants $E$, $\varphi$, $\varepsilon$ and $\delta$ as 
the surface $\M$. Now the uniqueness part of Theorem~\ref{Bone_Phi_bar_Phi_m1_m2_E_phi_R42-tl} implies that $\hat\M$ 
is obtained from $\M$ through a proper motion.
\end{proof}
 
\smallskip

As an application of the proved theorems we consider the question: \emph{When two minimal time-like surfaces of general
type in $\RR^4_2$ are locally isometric to each other, so that the curvature of the normal connection $\varkappa$
is also preserved?} This question is considered in \cite{S-2}, in the case $K^2-\varkappa^2>0$, which means in the case 
of surfaces of the first and the second type, according to Definition \ref{Min_Surf_kind123-def-tl}\,. Now, using 
the theorems of Bonnet type for surfaces of all three types, we can prove the corresponding theorems for all minimal 
time-like surfaces of general type. 

 In order to simplify the formulations we introduce the following notion:
\begin{dfn}\label{dfh-Str_Isom_R42-tl}
An isometry between two minimal time-like surfaces in $\RR^4_2$ is said to be a \textbf{strong isometry}, 
if it preserves the curvature of the normal connection $\varkappa$.
\end{dfn}

\medskip

 If $\M$ is minimal time-like surface in $\RR^4_2$, consider the natural isometry between $\M$ and $\M_\theta$, 
defined in Proposition \ref{Isom_M_theta-M-tl}. It follows from formula \eqref{K_kappa-1-param_family_R42-tl} 
that this isometry is a strong isometry. Combining formulas \eqref{hat_K_kappa-K_kappa-antimov_R42-tl} and
\eqref{hat_K_kappa-K_kappa-conj_R42-tl} it follows that the natural isometry between $\M$ and $\bar\M^a$ is 
also a strong isometry. Consequently, the whole one-parameter family of surfaces $\bar\M^a_\theta$ are strongly 
isometric to the given surface $\M$. It turns out for the minimal time-like surfaces of general type in $\RR^4_2$, 
that these examples are all possible. In order to prove that, we need the following
\begin{lem}\label{f_prime-const_DD}
Let $f$ be a holomorphic $\DD$-valued function, defined in a connected domain of $\DD$, such that $|f'(t)|^2=1$\,.
Then $f$ has the form $f(t)=\pm\,e^{\jj\theta}t+c$\,, where $\theta\in\RR$ and $c\in\DD$ are constants. 
\end{lem}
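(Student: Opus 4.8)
The plan is to pass to the diagonal (null) basis, in which a holomorphic function on $\DD$ decouples into two independent functions of the two null coordinates; the unit-modulus hypothesis will then force each of these to be affine, and the classification of unit double numbers via the trigonometric form will finish the proof.

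First I would introduce the null coordinates $a=u-v$ and $b=u+v$, so that $t=a\qq+b\bar\qq$ by \eqref{t-qq}. From \eqref{delta-t-delta-uv} one computes $\frac{\partial}{\partial t}=\qq\,\partial_a+\bar\qq\,\partial_b$ and $\frac{\partial}{\partial\bar t}=\bar\qq\,\partial_a+\qq\,\partial_b$. Writing $f=P\qq+Q\bar\qq$ with real components $P,Q$ and using the multiplication rules \eqref{qq-prop}, holomorphy $\frac{\partial f}{\partial\bar t}=0$ reduces to $P_b=0$ and $Q_a=0$; hence locally $P=P(a)$ and $Q=Q(b)$ each depend on a single null variable. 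Differentiating then gives $f'=\frac{\partial f}{\partial t}=P'(a)\qq+Q'(b)\bar\qq$, and a short computation with \eqref{qq-prop} yields $|f'|^2=f'\,\overline{f'}=P'(a)\,Q'(b)$.

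Next I would exploit the hypothesis $|f'|^2=1$, i.e. $P'(a)\,Q'(b)=1$ identically. Near any point the null coordinates vary independently, so freezing $b$ shows that $P'$ is locally constant and freezing $a$ shows that $Q'$ is locally constant; thus $f'$ is locally constant, and since the domain is connected $f'\equiv w$ for a fixed $w=c_1\qq+c_2\bar\qq\in\DD$ with $c_1c_2=|w|^2=1$. Integrating the relation $f'\equiv w$ (a holomorphic function whose derivative vanishes is constant on a connected domain) gives $f(t)=w\,t+c$ for some constant $c\in\DD$, where I use the component-wise product \eqref{sum-prod-qq} to write $w\,t=c_1a\qq+c_2b\bar\qq$.

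It then remains only to identify $w$. Since $|w|^2=1>0$ we have $w\notin\DD_0$, so by \eqref{D+} either $w$ or $-w$ lies in $\DD_+$; writing whichever one lies in $\DD_+$ in the trigonometric form \eqref{t-exp} with $\rho=\sqrt{|w|^2}=1$ gives $w=\pm\e^{\jj\theta}$ for some $\theta\in\RR$ (the factors $\pm\jj$ of \eqref{t-exp} are automatically excluded, as they would force $|w|^2<0$). This yields $f(t)=\pm\e^{\jj\theta}t+c$, as claimed. The only slightly delicate steps are the component decomposition forced by holomorphy and the passage from \emph{locally constant} to \emph{constant} via connectedness; both are routine once the null-basis formalism is set up, and the final classification of unit double numbers is immediate from \eqref{t-exp}.
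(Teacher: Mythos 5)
Your proof is correct, but it takes a genuinely different route from the paper's. You pass to the null basis, write $f=P\qq+Q\bar\qq$, observe that holomorphy decouples this into $P=P(a)$, $Q=Q(b)$ with $|f'|^2=P'(a)Q'(b)$, and then use the separation of variables in $P'(a)Q'(b)=1$ to force $f'$ to be locally (hence, by connectedness, globally) constant. The paper instead argues intrinsically in the $\DD$-calculus: differentiating $f'\bar f'=1$ with respect to $t$ and using that $\bar f'$ is anti-holomorphic gives $\frac{\partial f'}{\partial t}\,\bar f'=0$; since $|\bar f'|^2=1\neq 0$ makes $\bar f'$ invertible at every point, this yields $f''=0$ directly, and the classification $f'=\pm\,\e^{\jj\theta}$ follows from $|f'|^2=1$ exactly as in your last step. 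The paper's argument is shorter and never leaves the double-number formalism, while yours makes the algebra $\DD\cong\RR\oplus\RR$ completely explicit and reduces everything to elementary one-variable calculus; your version also has the minor virtue of exhibiting the general local form $f=P(a)\qq+Q(b)\bar\qq$ of a $\DD$-holomorphic function, which the paper only uses implicitly elsewhere. Both proofs handle the local-to-global passage on a connected domain correctly, and your exclusion of the factors $\pm\jj$ via the sign of $|w|^2$ is exactly the right observation.
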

\begin{proof}
 Differentiating the equality $f'\bar f' = |f'|^2=1$\,, we find 
$\frac{\partial f'}{\partial t} \bar f' + f' \frac{\partial \bar f'}{\partial t} = 0$\,.
Since $f$ is holomorphic, then $\frac{\partial \bar f'}{\partial t} = 0$\,, which gives
$\frac{\partial f'}{\partial t} \bar f' = 0$\,. It follows from $|\bar f'|^2=|f'|^2=1$ that $\bar f'(t)$ is 
an invertible element of $\DD$ for every $t$. Thus, it follows from $\frac{\partial f'}{\partial t} \bar f' = 0$ 
that $f'' = \frac{\partial f'}{\partial t} = 0$\,. From here and from $|f'|^2=1$ we get $f'(t)=\pm\,e^{\jj\theta}$, 
where $\theta\in\RR$. The last is equivalent to $f(t)=\pm\,e^{\jj\theta}t+c$\,,\; $c= \mathrm{ const}\in\DD$.
\end{proof}

The next theorem answers the question above.
\begin{theorem}\label{Thm-Str_Isom_Surf_R42-tl}
Let $\M$ be a minimal time-like surface of general type in $\RR^4_2$. Then all surfaces of the one-parameter 
families of minimal time-like surfaces $\M_\theta$ and $\bar\M^a_\theta$ associated to $\M$ and $\bar\M^a$,
respectively, are strongly isometric to $\M$. 

 Conversely, let $\M$ and $\hat\M$ be minimal time-like surfaces of general type in $\RR^4_2$ and let $p_0\in\M$, 
$\hat p_0\in\hat\M$ be fixed points in them. Suppose that there exist such neighborhoods of $p_0$ and $\hat p_0$
in $\M$ and $\hat\M$, respectively, which are strongly isometric to each other. Further, suppose that $p_0$ 
and $\hat p_0$ are corresponding points under this isometry. Then there exists a neighborhood of $\hat p_0$ 
in $\hat\M$, which is obtained through a proper motion from a neighborhood of $p_\theta$
on the surface $\M_\theta$ or $\bar\M^a_\theta$, where $\M_\theta$ and $\bar\M^a_\theta$ are the corresponding
one-parameter families of minimal time-like surfaces associated to $\M$ and $\bar\M^a$; \:$p_\theta$ being 
the corresponding to $p_0$ point under the natural isometry between $\M$ and $\M_\theta$ or $\bar\M^a_\theta$. 
\end{theorem}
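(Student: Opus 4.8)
I would prove the two directions separately; the converse is the substantial part, and the plan there is to read the strong isometry in canonical coordinates and reduce it, through Lemma \ref{f_prime-const_DD}, to a rigid change of the double variable $t$. For the direct statement, Proposition \ref{Isom_M_theta-M-tl} gives an isometry $\mathcal{F}_\theta\colon\M\to\M_\theta$, and formula \eqref{K_kappa-1-param_family_R42-tl} shows $\varkappa_\theta=\varkappa$; hence $\mathcal{F}_\theta$ preserves $\varkappa$ and is a strong isometry. For the family $\bar\M^a_\theta$ I would invoke the remark preceding the theorem: combining \eqref{hat_K_kappa-K_kappa-antimov_R42-tl} with \eqref{hat_K_kappa-K_kappa-conj_R42-tl} shows that the natural isometry $\M\to\bar\M^a$ preserves $\varkappa$, so it is strong. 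Applying the first part to $\bar\M^a$ in place of $\M$, each $\bar\M^a_\theta$ is strongly isometric to $\bar\M^a$, and therefore, by composition of strong isometries, to $\M$.

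For the converse, let $\psi\colon\M\to\hat\M$ be the given strong isometry with $\psi(p_0)=\hat p_0$. Being an isometry it preserves $K$, and being strong it preserves $\varkappa$; hence it preserves the sign of $K^2-\varkappa^2$, and Theorem \ref{DegP_kind123-K_kappa-tl} puts $\M$ and $\hat\M$ in the same class (both of type $1$ or $2$, or both of type $3$). Fix canonical coordinates $t$ on $\M$ near $p_0$ and $\hat t$ on $\hat\M$ near $\hat p_0$, and let $\Psi$ be the coordinate form of $\psi$, so $\hat t=\Psi(t)$. As both systems are isothermal, $\Psi$ is conformal for the hyperbolic metric and therefore holomorphic or anti-holomorphic in $\DD$. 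The key computation is this: the isometry condition reads $E_\M(t)=E_{\hat\M}(\Psi(t))\,|\Psi'(t)|^2$ (the conformal factor, as in \eqref{E_s-hol-tl}), while the scalar invariance of the curvatures gives $K_\M(t)=K_{\hat\M}(\Psi(t))$ and $\varkappa_\M(t)=\varkappa_{\hat\M}(\Psi(t))$; since both systems are canonical, \eqref{E_phi_K2_kappa2_R42-tl} expresses $E$ through $K^2-\varkappa^2$ on each surface, whence $E_\M(t)=E_{\hat\M}(\Psi(t))$ and so $|\Psi'(t)|^2=1$. By Lemma \ref{f_prime-const_DD} this forces $\Psi(t)=\pm\,\e^{\jj\frac{\theta}{2}}t+c$ in the holomorphic case and $\Psi(t)=\pm\,\e^{\jj\frac{\theta}{2}}\bar t+c$ in the anti-holomorphic case, for constants $\theta\in\RR$ and $c\in\DD$.

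I would then match $\hat\M$ to a concrete family member. By Theorem \ref{Can_1-param_family-tl} the canonical coordinate $s$ on $\M_\theta$ satisfies $t=\e^{-\jj\frac{\theta}{2}}s$, so in the holomorphic case the composite strong isometry $\M_\theta\to\M\to\hat\M$ becomes, in canonical coordinates, the pure translation $s\mapsto\hat t=s+c$; using \eqref{K_kappa_1-param_family_can_R42-tl} this shows that $\hat\M$ and $\M_\theta$ carry the same $K$ and $\varkappa$ as functions of canonical coordinates, after the admissible translation and the reflection absorbing the sign $\pm$ (Theorem \ref{Can_Coord-uniq_R42-tl}). The anti-holomorphic case leads in the same way to $\bar\M^a_\theta$, whose curvatures in canonical coordinates are computed from \eqref{K_kappa_1-param_family_can_R42-tl} together with the description of $\bar\M^a$ in Theorem \ref{Can_antimov_conj_R42-tl}. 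In either case, once the matching of $(K,\varkappa)$ is established, the relevant Bonnet theorem (\ref{Bone_Phi_bar_Phi_m1_m2_K_kappa_kind1_R42-tl}, \ref{Bone_Phi_bar_Phi_m1_m2_K_kappa_kind2_R42-tl} or \ref{Bone_Phi_bar_Phi_m1_m2_K_kappa_kind3_R42-tl}) yields that $\hat\M$ is obtained from the corresponding member $\M_\theta$ or $\bar\M^a_\theta$ by a proper motion, with $p_\theta$ the image of $p_0$ under the natural isometry.

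The main obstacle is the type bookkeeping required to invoke the per-type Bonnet theorems, since the type of a surface is extrinsic and is not directly controlled by the isometry. For the class $\{1,2\}$ the canonical coordinates admit the extra conjugation freedom $\hat t\mapsto\bar{\hat t}$ (see \eqref{uniq-kind12_R42-tl}), so one may normalize $\Psi$ to be holomorphic and reduce to matching $\M_\theta$; one then splits according to whether $\hat\M$ has the same type as $\M_\theta$ or the opposite one, the opposite case being settled by Theorem \ref{M1_M2_same_K_kappa_R42-tl}, which identifies $\hat\M$ with $\bar{(\M_\theta)}^a$, coinciding with $\bar\M^a_\theta$ up to a proper motion. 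For type $3$ this conjugation freedom is absent (only \eqref{uniq-kind3_R42-tl} is available), so the holomorphic/anti-holomorphic alternative genuinely selects $\M_\theta$ or $\bar\M^a_\theta$, and the single uniqueness statement of Theorem \ref{Bone_Phi_bar_Phi_m1_m2_K_kappa_kind3_R42-tl} concludes. The remaining verifications — that $\bar{(\cdot)}^a$ commutes with passing to the associated family up to a proper motion, and the tracking of base points — are routine.
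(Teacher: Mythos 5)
Your proposal is correct and follows essentially the same route as the paper: read the strong isometry in canonical coordinates, use \eqref{E_phi_K2_kappa2_R42-tl} to force $|\Psi'|^2=1$, apply Lemma \ref{f_prime-const_DD} to rigidify the coordinate change, identify the target with $\M_\theta$ or $\bar\M^a_\theta$ via \eqref{K_kappa_1-param_family_can_R42-tl}, and finish with the uniqueness parts of the Bonnet-type theorems, treating the type-mismatch within class $\{1,2\}$ through $\bar\M^a$ and the third type by the genuine holomorphic/anti-holomorphic dichotomy. The only cosmetic difference is that you invoke Theorem \ref{M1_M2_same_K_kappa_R42-tl} where the paper uses Theorem \ref{Can_antimov_conj_R42-tl} directly, which amounts to the same argument.
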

\begin{proof}
We have already mentioned, that the assertion in the right direction is true. It remains to prove the inverse assertion.

 Denote by $f:\hat\M \to \M$ the given strong isometry. Let $t\in\DD$ give isothermal coordinates in a neighborhood of $p_0$
on $\M$, and $s\in\DD$ determine isothermal coordinates in a neighborhood of $\hat p_0$ on $\hat\M$. Furthermore, we
suppose that the points $p_0$ and $\hat p_0$ have zero coordinates. Since $f$ is an isometry, then it follows that 
the function $t=f(s)$ preserves the orthogonality in $\RR^2_1$. Hence, the function $f(s)$ is either holomorphic (over $\DD$), 
or anti-holomorphic. Suppose $f(s)$ is holomorphic.  
 
 The condition that $f$ is an isometry means that the first fundamental forms of the surfaces are related as follows:
$\mathbf{\hat I}=f^*\mathbf{I}$. Using the representation \eqref{Idt-tl}, the last relation gives: 
\[
\mathbf{\hat I}=\hat E(s) |ds|^2=f^*E(t)|dt|^2=E(f(s))|df(s)|^2=E(f(s))|f'(s)|^2 |ds|^2.
\]
Therefore the coefficients of the first fundamental forms satisfy the condition: 
\begin{equation}\label{hat_E-E-Isom_R42-tl}
\hat E(s) = E(f(s))|f'(s)|^2.
\end{equation}

 On the other hand, under the conditions of the theorem we have:
\begin{equation}\label{hat_K-kappa-Str_Isom_R42-tl}
\hat K(s) = K(f(s))\,; \qquad \hat\varkappa(s) = \varkappa(f(s))\,.
\end{equation}

 First we suppose that $\M$ is of the first or the second type according to Definition \ref{Min_Surf_kind123-def-tl}\,.
Then Theorem \ref{DegP_kind123-K_kappa-tl} gives that $\hat\M$ is also of the first or the second type. So far 
we used arbitrary isothermal coordinates. Since $\M$ and $\hat\M$ are minimal surfaces of general type, further, we suppose 
that $t$ and $s$ are canonical coordinates on $\M$ and $\hat\M$, respectively. If necessary, according to Theorem
\ref{Can_Coord-uniq_R42-tl}, we can replace the coordinates $s$ on $\hat\M$ with $\bar s$, which will ensure us 
the function $f(s)$ is holomorphic. Taking into account \eqref{E_phi_K2_kappa2-kind12_R42-tl} and 
\eqref{hat_K-kappa-Str_Isom_R42-tl}, we get the equality:
\begin{equation}\label{hat_E-E_Can-Str_Isom_R42-tl}
\hat E(s) = E(f(s))\,.
\end{equation}

 Comparing equalities \eqref{hat_E-E-Isom_R42-tl} and \eqref{hat_E-E_Can-Str_Isom_R42-tl}, we find $|f'(s)|^2=1$\,. 
Applying Lemma \ref{f_prime-const_DD}\, to $f$, we obtain $f(s)=\pm\,e^{-\jj\frac{\theta}{2}}s+c$, where 
$\theta\in\RR$ and $c \in \DD$ are constants. Under the given conditions we have $f(0)=0$\,. Consequently the function
$f$ has the form $f(s)=\pm\,e^{-\jj\frac{\theta}{2}}s$.
We can always choose the canonical coordinates $s$, so that we have $f(s)=\e^{-\jj\frac{\theta}{2}}s$.  
Replacing the last formula in the equalities \eqref{hat_K-kappa-Str_Isom_R42-tl}, we obtain the following: 
\begin{equation}\label{hat_K-kappa_theta-Str_Isom_R42-tl}
\hat K(s) = K(\e^{-\jj\frac{\theta}{2}}s)\,; \qquad  \hat\varkappa(s) = \varkappa(\e^{-\jj\frac{\theta}{2}}s)\,.
\end{equation}

 The last expressions for $\hat K(s)$ and $\hat\varkappa(s)$ coincide with the expressions in formula 
\eqref{K_kappa_1-param_family_can_R42-tl} for the curvatures $K$ and $\varkappa$ of the corresponding
surfaces $\M_\theta$ in the one-parameter family of minimal surfaces associated to $\M$. Thus we have:
\begin{equation}\label{hat_K-kappa_1-param-Str_Isom_R42-tl} 
\hat K(s)=K_\theta(s)\,; \qquad \hat\varkappa(s)=\varkappa_\theta(s)\,.
\end{equation}
where $s$ gives canonical coordinates on both surfaces $\hat\M$ and $\M_\theta$.

 First we suppose that $\M$ and $\hat\M$ are of the same type. Then, according to Theorem \ref{Can_1-param_family-tl}\,, 
$\M_\theta$ is of the same type as $\M$ and respectively $\hat\M$. Equalities \eqref{hat_K-kappa_1-param-Str_Isom_R42-tl} 
mean that we can apply to $\hat\M$ and $\M_\theta$ the uniqueness part of 
Theorem \ref{Bone_Phi_bar_Phi_m1_m2_K_kappa_kind1_R42-tl} or Theorem \ref{Bone_Phi_bar_Phi_m1_m2_K_kappa_kind2_R42-tl}\,.
Therefore, there exists a neighborhood of $\hat p_0$ in $\hat\M$, 
which is obtained through a proper motion from a neighborhood of the corresponding point $p_\theta\in\M_\theta$. 
This proves the assertion in the case under consideration.

Now, suppose that $\M$ and $\hat\M$ are of different type. Then, it follows from Theorem \ref{Can_antimov_conj_R42-tl} 
that $\bar\M^a$ is of the same type as $\hat\M$. The same theorem implies that $\bar\M^a$ is locally strongly 
isometric to $\M$, and consequently, also to $\hat\M$. Therefore, $\bar\M^a$ and $\hat\M$ satisfy the conditions
of the case, considered previously. Hence, there exists a neighborhood of $\hat p_0$ in $\hat\M$, which is obtained
through a proper motion from a neighborhood of the corresponding point $p_\theta\in\bar\M^a_\theta$. This proves
the assertion in this case as well.

\smallskip

 Further, we consider the case, when $\M$ is of the third type according to Definition~\ref{Min_Surf_kind123-def-tl}\,.
Since $K$ and  $\varkappa$ are invariant under a strong isometry, then it follows from Theorem 
\ref{DegP_kind123-K_kappa-tl} that $\hat\M$ is also of the third type. According to Theorem \ref{Can_Coord-uniq_R42-tl},
we cannot replace the coordinates $s$ on $\hat\M$ with $\bar s$ and therefore we have to consider separately the 
cases of a holomorphic and an anti-holomorphic function $f(s)$.

 If $f(s)$ is holomorphic, then we have again \eqref{hat_E-E_Can-Str_Isom_R42-tl}, \eqref{hat_K-kappa_theta-Str_Isom_R42-tl} 
and \eqref{hat_K-kappa_1-param-Str_Isom_R42-tl}. Analogously to the case when the surfaces are of the first or 
the second type, we conclude that there exists a neighborhood of $\hat p_0$ in $\hat\M$, which is obtained through
a proper motion from a neighborhood of the corresponding point $p_\theta\in\M_\theta$.

 Finally, it remained the case of an anti-holomorphic function $f(s)$. Denote by $g$ the natural strong isometry
from $\M$ to $\bar\M^a$. Since the surfaces are of the third type, parametrized by canonical coordinates, this 
isometry is given by a formula of the type $r=\bar t$, according to Theorem \ref{Can_antimov_conj_R42-tl}\,, 
where $r$ are canonical coordinates on $\bar\M^a$. Then the composition of $f$ and $g$ gives a local isometry
from $\hat\M$ to $\bar\M^a$, which in canonical coordinates is given by the function $r=\bar f(s)$. 
The last function is holomorphic. Therefore, $\bar\M^a$ and $\hat\M$ satisfy the conditions of the case, 
considered previously. Consequently, there exists a neighborhood of $\hat p_0$ in $\hat\M$, which
is obtained through a proper motion from a neighborhood of the corresponding point $p_\theta\in\bar\M^a_\theta$.
This proves the theorem.
\end{proof} 

\textbf{Acknowledgements}

The first author is partially supported by the National Science Fund, Ministry of Education and Science of Bulgaria 
under contract DN 12/2.
%===================================================================================================================================
%===================================================================================================================================

\end{document}